 \newtheorem{thm}{Theorem}[section]
 \newtheorem{prop}[thm]{Proposition}
 \newtheorem{lem}[thm]{Lemma}
\theoremstyle{definition}
\newtheorem{defn}[thm]{Definition}
\theoremstyle{remark}
\newtheorem{rem}[thm]{Remark}
\renewcommand{\Im}{\mathrm{Im}}
\newcommand{\Sp}{\mathrm{Sp}}
\newcommand{\GL}{\mathrm{GL}}
\renewcommand{\Re}{\mathrm{Re}}
\renewcommand{\Im}{\mathrm{Im}}
\newcommand{\diag}{\mathrm{diag}}
\newcommand{\sgn}{\mathrm{sgn}}
\newcommand{\tr}{\mathrm{tr}\,}
\newcommand{\f}[1]{\mathfrak{{#1}}}
\newcommand{\pr}{\operatorname{pr}}
\newcommand{\ord}{\operatorname{ord}}
\newcommand{\Vol}{\operatorname{Vol}}
\newcommand{\norm}[1]{\mathop{\left\{\!\!\left\{#1\right\}\!\!\right\}}}
\newcommand{\back}{\backslash}
\newcommand{\action}[1]{\,{}^{\alpha}\! {{#1}}} 
\newcommand{\actioninv}[1]{\,{}^{\alpha^{-1}}\! {{#1}}} 
\newcommand{\transpose}[1]{\text{$^t\!#1$}} 
\def\sectionnam{\@empty}
\def\subsectionnam{\@empty}
\begin{document}
\title[On the Rankin-Selberg method for Siegel modular forms] 
{On the Rankin-Selberg method for vector valued Siegel modular forms}

\author{Thanasis Bouganis and Salvatore Mercuri}
\address{Department of Mathematical Sciences\\ Durham University\\
 Durham, UK.}
\email{athanasios.bouganis@durham.ac.uk\\ salvatore.mercuri@durham.ac.uk}
\subjclass[2010]{11F46, 11S40, 11F27, 11M41} 
\keywords{Siegel modular forms, standard $L$-function, theta series, Eisenstein series}

\maketitle In this work we use the Rankin-Selberg method to obtain results on the analytic properties of the standard $L$-function attached to vector valued Siegel modular forms. In particular we provide a detailed description of its possible poles and obtain a non-vanishing result of the twisted $L$-function beyond the usual range of absolute convergence. We remark that these results were known in this generality only in the case of scalar weight Siegel modular forms. As an interesting by-product of our work we establish the cuspidality of some theta series.


\section{Introduction}

The standard $L$ function attached to a scalar weight Siegel eigenform has been extensively studied in the literature. Its analytic properties have been investigated, among others, by Andrianov and Kalinin \cite{AK}, B\"{o}cherer \cite{B}, Shimura \cite{Sh94, Sh95,Sh00}, and Piatetski-Shapiro and Rallis \cite{PR1,PR2}. In all these works the properties of the $L$-function are read off by properties of Siegel-type Eisenstein series, which themselves are well-understood. However there are two different ways to obtain an integral expression of the $L$-function involving Siegel-type Eisenstein series. The first is what in this paper we will be calling the Rankin-Selberg method (involving a theta series and a Siegel-type Eisenstein series of the same degree) and the second is usually called the doubling method (involving the restriction of a higher degree Siegel-type Eisenstein series). 

It is now well-understood, especially thanks to the work of Shimura (see for example the discussion in \cite[Remark 6.3, (III)]{Sh95}) and \cite[proof of Theorem 28.8]{Sh00}, that the two methods are not equivalent, and that both deserve to be explored for their own merit. Especially when it comes to non-vanishing results beyond the usual range of absolute convergence, it seems that the use of the Rankin-Selberg method can be used to obtain better results than the ones obtained by employing the doubling method. Indeed such a non-vanishing theorem has been established for the scalar weight case in \cite{Sh00}, where the usual bound of $\Re(s) > 2n+1$ in \cite[Lemma 2012]{Sh00} is extended to $\Re(s) > (3n/2) + 1$ in \cite[Theorem 20.13]{Sh00}, where $n$ is the degree of the symplectic group. Furthermore, in the same book Shimura uses this result to establish algebraicity results for Siegel modular forms. The extension of the non-vanishing range has a direct consequence on the weights of the Siegel modular forms for which one can obtain algebraicity results (see \cite[Theorem 28.5 and Theorem 28.8]{Sh00}

Of course one can consider the standard $L$-function attached to non-scalar weight (vector valued) Siegel modular forms. The known results in this situation are not as general and precise as in the scalar weight situation. Actually the most general results are those of Piatetski-Shapiro and Rallis using the automorphic representation language \cite{PR1,PR2}. In \cite{PR1} they consider the doubling method and in \cite{PR2} the Rankin-Selberg method. In both papers the $L$-function is untwisted, Euler factors are removed, and the gamma factors are not given explicitly. 

The main aim of this paper is to consider the Rankin-Selberg expression in the vector valued case, and obtain precise results regarding the location and orders of poles. The main ingredient for our approach is the construction of a particular vector valued theta series, which in turn relies on the existence of some pluriharmonic polynomials studied by Kashiwara and Vergne \cite{KV}. We should remark right away that this puts some limitations on the representations we can consider as weights of the underlying Siegel modular form. Furthermore we will use this expression to obtain some non-vanishing results of the standard $L$-function similar to the one obtained by Shimura \cite{Sh00} in the scalar weight situation.  A crucial ingredient for this is the cuspidality of certain vector valued theta series for which, since the approach used in \cite{Sh00} is not applicable, we need a completely new idea.  We believe that our results have direct consequences for the algebraicity of the special $L$-values, much in the same way the results of Shimura \cite{Sh00} had in the scalar weight situation. We hope to explore these application in the near future.

\section{Vector Valued Siegel Modular Forms}

Alongside that of standard scene-setting, the intention of this section is to introduce adelic vector valued Siegel modular forms and establish their Fourier expansions. We mainly follow \cite{Sh00}.

Throughout the paper $1\leq n\in\mathbb{Z}$; $\mathbb{T}\subseteq\mathbb{C}$ is the unit circle; and we define three characters, with images in $\mathbb{T}$, on $\mathbb{C}, \mathbb{Q}_p, \mathbb{A}_{\mathbb{Q}}$ respectively by
\begin{align*}
	e(z) :&= e^{2\pi iz} \\
	e_p(z) :&= e(-\{x\}) \\
	e_{\mathbb{A}}(x) :&= e(x_{\infty})\prod_{p\in\mathbf{h}} e_p(x_p)
\end{align*}
where $\{x\}$ denotes the fractional part of $x\in\mathbb{Q}_p$, $\infty$ denotes the Archimedean place of $\mathbb{Q}$, and $\mathbf{h}$ denotes the non-Archimedean places. When convenient, for $x\in \mathbb{A}_{\mathbb{Q}}$ and a square matrix $M$, we shall also write $e_{\mathbf{h}}(x) = e_{\mathbb{A}}(x_{\mathbf{h}})$; $e_{\infty}(x) = e_{\mathbb{A}}(x_{\infty})$; $|M| = \det(M)$, $\|M\| = |\det(M)|$; $M>0$ ($M\geq 0$) to mean that $M$ is positive definite (respectively positive semi-definite); $\sqrt{M}$ to be a matrix such that $(\sqrt{M})^2 = M$; and
\[
	\text{diag}[M_1, \dots, M_{\ell}] = \begin{pmatrix} M_1 & 0 & \cdots &0 \\ 0 & M_2 & \cdots & 0 \\ \vdots & \vdots & \ddots & \vdots \\ 0 & 0 & \cdots & M_{\ell}\end{pmatrix}
\]
for square matrices $M_i$.


If $\alpha\in \GL_{2n}(\mathbb{Q})$ then put
\[
	\alpha = \begin{pmatrix} a_{\alpha} & b_{\alpha} \\ c_{\alpha} & d_{\alpha}\end{pmatrix}
\]
for $a_{\alpha}, b_{\alpha}, c_{\alpha}, d_{\alpha}\in M_n(\mathbb{Q})$. With 
\begin{align*}
	G :&= \Sp_n(\mathbb{Q}) = \{\alpha\in\GL_{2n}(\mathbb{Q})\mid \transpose{\alpha}\eta_n\alpha = \eta_n\} \hspace{20pt} \eta_n:=\begin{pmatrix} 0 & -I_n \\ I_n & 0\end{pmatrix} \\
	P:&= \{\alpha\in G\mid c_{\alpha} = 0\} \\
	\mathbb{H}_n:&=\{z = x+iy\in M_n(\mathbb{C})\mid \transpose{z} = z, y>0\},
\end{align*}
let $G_{\mathbb{A}}$ and $P_{\mathbb{A}}$ denote the adelizations of $G$ and $P$ respectively; there are the usual respective actions of $\Sp_n(\mathbb{R})$ and $G_{\mathbb{A}}$ on $\mathbb{H}_n$ given by
\begin{align*}
	\alpha\cdot z &= (a_{\alpha}z+b_{\alpha})(c_{\alpha}z+d_{\alpha})^{-1} \\
	x\cdot z &= x_{\infty}\cdot z 
\end{align*}
for $\alpha\in\Sp_n(\mathbb{R}), x\in G_{\mathbb{A}}$, and $z\in\mathbb{H}_n$; and we also have factors of automorphy
\begin{align*}
	\mu(\alpha, z) &= c_{\alpha}z+d_{\alpha} \\
	\mu(x, z) &= \mu(x_{\infty}, z).
\end{align*}

Let $V$ be a finite-dimensional complex vector space and let
\begin{align}
	\rho:\GL_n(\mathbb{C})\to GL(V)
\end{align}
be a rational representation. For any $f:\mathbb{H}_n\to V$ and $\alpha\in G_{\mathbb{A}}$ define a new function $f|_{\rho}\alpha:\mathbb{H}_n\to V$ by
\[	
	(f|_{\rho}\alpha)(z) := \rho(\mu(\alpha, z))^{-1}f(\alpha z)
\]
and it is clear that $f|_{\rho}(\alpha\beta) = (f|_{\rho}\alpha)|_{\rho}\beta$ for any two $\alpha, \beta\in G_{\mathbb{A}}$.

\begin{defn}\label{modularform} Given a congruence subgroup $\Gamma\leq G$ and $\rho$ as in (1) then $\mathcal{M}_{\rho}(\Gamma)$ denotes the complex vector space of all holomorphic $f:\mathbb{H}_n\to V$ such that
	\begin{enumerate}
		\item $f|_{\rho}\gamma = f$ for all $\gamma\in\Gamma$;
		\item $f$ is holomorphic at all cusps.
	\end{enumerate}

\end{defn}

The last condition is needed only in the case of $n=1$. In order to explain it, and also introduce the notion of a cusp form, we define the sets of symmetric matrices
\begin{align*}
	S:&= \{\tau\in M_n(\mathbb{Q})\mid \transpose{\tau} = \tau\} \\
	S_{+} :&= \{\tau\in S\mid \tau\geq 0\} \\
	S^{+} :&= \{\tau\in S\mid \tau > 0\} \\
	S(\mathfrak{x}) :&= S\cap M_n(\mathfrak{x}) \\
	S_{\mathbf{h}}(\f{x}) :&= \prod_p S(\mathfrak{x})_p
\end{align*}
for some fractional ideal $\mathfrak{x}$.
For any $f\in\mathcal{M}_{\rho}(\Gamma)$ and any $\gamma \in G$ we have a Fourier expansion of the form
\begin{align}
	(f|_{\rho} \gamma)(z) = \sum_{\tau\in S_{+}}c(\tau)e_{\infty}(\tau z),
\end{align}
where $c(\tau)\in V$ and $c(\tau) = 0$ for all $\tau\not\in L$ for some $\mathbb{Z}$-lattice $L$. This is automatic in the case of $n > 1$ and is the condition we impose in the case of $n=1$. We
let $\mathcal{S}_{\rho}(\Gamma)$ denote the subspace of cusp forms, that is those $f \in M_{\rho}(\Gamma)$ with the property that in the expansion $(2)$ above the sum is running over $\tau \in S^+$.

To talk about adelic vector valued modular forms we focus on representations of the form $\rho_k := \det^k\otimes\rho$ where $\rho$ is as in (1), $k\in\frac{1}{2}\mathbb{Z}$, and the definition of $\det^k$ depends on whether $k$ is an integer or not. If $k\not\in\mathbb{Z}$ then we need to use the metaplectic group $Mp_n(\mathbb{Q})$, its adelization $M_{\mathbb{A}}$, and a particular subgroup $\mathfrak{M}\leq M_{\mathbb{A}}$ (whose definition can be found in [\cite{Sh00}, p.129]) to define the appropriate factors of automorphy. For any $\sigma\in\mathfrak{M}$, whose image under the natural projection map $\pr:M_{\mathbb{A}}\to G_{\mathbb{A}}$ is pr$(\sigma) = \alpha$, we put $x_{\sigma} = x_{\alpha}$ for $x\in\{a, b, c, d\}$, $\sigma\cdot z = \alpha\cdot z$, and $\mu(\sigma, z) = \mu(\alpha, z)$. We have
\[
	\text{det}^k(\mu(\sigma, z)^{-1}) = j_{\sigma}^k(z)^{-1} := 
		\begin{cases}
			|\mu(\sigma, z)|^{-k} &\text{if $k\in\mathbb{Z}$, $\sigma\in G_{\mathbb{A}}$} \\
			h_{\sigma}(z)^{-1}|\mu(\sigma, z)|^{-[k]} &\text{if $k\not\in\mathbb{Z}$, $\sigma\in\mathfrak{M}$}.
		\end{cases}
\]
where $h_{\sigma}(z)$ is the factor of automorphy defined in [\cite{Sh00}, p.130]. If $k\in\mathbb{Z}$ then make the natural assumption that the projection map $\pr$ and any of its associated lifts
\begin{align*}
	r:G_{\mathbb{A}}\to M_{\mathbb{A}} \\
	r_P:P_{\mathbb{A}}\to M_{\mathbb{A}},
\end{align*}
are all the identity.

The level of the studied forms will be congruence subgroups of the following form
\begin{align*}
	\Gamma &= \Gamma[\mathfrak{b}^{-1}, \mathfrak{bc}] := G\cap D[\mathfrak{b}^{-1}, \mathfrak{bc}] \\
	D[\mathfrak{b}^{-1}, \mathfrak{bc}]:&=Sp_n(\mathbb{R})\prod_p D_p[\mathfrak{b}^{-1}, \mathfrak{bc}] \\
	D_p[\mathfrak{b}^{-1}, \mathfrak{bc}]:&=\{x\in Sp_n(\mathbb{Q}_p)\mid a_x, d_x\in M_n(\mathbb{Z}_p), b_x\in M_n(\mathfrak{b}^{-1}_p), c_x\in M_n((\mathfrak{bc})_p)\}
\end{align*}
for a fractional ideal $\mathfrak{b}$ and an integral ideal $\mathfrak{c}$ under the additional assumption that $2\mid \mathfrak{b}^{-1}$ and $2\mid \mathfrak{bc}$ if $k\not\in\mathbb{Z}$. 

Let $\psi$ be a Hecke character such that $\psi(x_{\infty})^n = \sgn(x_{\infty})^{n[k]}$ and $\psi_p(a) = 1$ for any $a\in\mathbb{Z}_p^{\times}$ with $a\in 1+\mathfrak{c}_p\mathbb{Z}_p$. Then we define the following complex vector spaces
\begin{align*}
	\mathcal{M}_{\rho_k}(\Gamma, \psi) :&= \{f\in \mathcal{M}_{\rho_k}\mid f|_{\rho_k}\gamma = \psi_{\mathfrak{c}}(|a_{\gamma}|)f\ \text{for every}\ \gamma\in \Gamma\} \\
	\mathcal{S}_{\rho_k}(\Gamma, \psi) :&= \mathcal{M}_{\rho_k}(\Gamma, \psi)\cap S_{\rho_k}
\end{align*}
where $\psi_{\mathfrak{c}}(x) = \prod_{p\mid\mathfrak{c}}\psi_p(x_p)$.

Suppose, for some Hecke character $\psi$, that $f, g:\mathbb{H}_n\to V$ satisfy $f|_{\rho_k}\gamma = \psi_{\f{c}}(|a_{\gamma}|)f$ and $g|_{\rho_k}\gamma = \psi_{\f{c}}(|a_{\gamma}|)g$ for all $\gamma\in\Gamma$, and endow $V$ with a Hermitian inner product $\prec\cdot, \cdot\succ$ with respect to which $\rho$ satisfies 
\[
	\prec \rho(M)\cdot, \cdot\succ = \prec \cdot, \rho(\transpose{\bar{M}})\cdot\succ
\]
for any $M\in GL_n(\mathbb{C})$. Then the Petersson inner product of $f$ and $g$ is given by 
\[
	\langle f, g\rangle := \text{Vol}(\Gamma\back\mathbb{H}_n)^{-1}\int_{\Gamma\back\mathbb{H}_n}\prec\rho_k(\sqrt{y})f(z), \rho_k(\sqrt{y})g\succ d^{\times} z
\]
whenever this integral is convergent, where 
\[
	\text{Vol}(\Gamma\back\mathbb{H}_n) := \int_{\Gamma\back\mathbb{H}_n} dz, \hspace{20pt} dz := \bigwedge_{p\leq q}(dx_{pq}\wedge dy_{pq}), \hspace{20pt} d^{\times}z := |y|^{-n-1}dz
\]
for $z = (x_{pq}+iy_{pq})_{p, q = 1}^n.$

With $(\f{b}, \f{c})$ and $\Gamma = \Gamma[\f{b}^{-1}, \f{bc}]$ as above we take $f\in\mathcal{M}_{\rho_k}(\Gamma, \psi)$. Then its adelization is a map $f_{\mathbb{A}}:\pr^{-1}(G_{\mathbb{A}})\to V$ defined by
\[
	f_{\mathbb{A}}(\alpha w) := \psi_{\f{c}}(|d_w|)(f|_{\rho_k}w)(i)
\]
for $\alpha\in G$ and $w\in \pr^{-1}(D[\f{b}^{-1}, \f{bc}])$. Note that if $x\in G_{\mathbb{A}}$ and $\alpha, w$ as above then
\begin{align*}
	f_{\mathbb{A}}(\alpha xw) = \psi_{\f{c}}(|d_w|)\rho_k(\mu(w, z)^{-1})f_{\mathbb{A}}(x).
\end{align*}
Let $t\in \pr^{-1}(G_{\mathbf{h}})$, $\Gamma^t := G\cap tD[\f{b}^{-1}, \f{bc}]t^{-1}$, and set
\[
	\mathcal{M}_{\rho_k}(\Gamma^t, \psi) := \{f\in\mathcal{M}_{\rho_k}\mid f|_{\rho_k}\gamma = \psi_{\f{c}}(|a_{t^{-1}\gamma t}|)f\ \text{for every}\ \gamma\in\Gamma^t\}.
\]

\begin{prop} \label{bijection} For any such $t$ as above and $y = \alpha w\in G\pr^{-1}(D[\f{b}^{-1}, \f{bc}])$ we have $f_{\mathbb{A}}(ty) = (f_t|_{\rho_k}y)(i)$ for some $f_t\in\mathcal{M}_{\rho_k}(\Gamma^t, \psi)$.
\end{prop}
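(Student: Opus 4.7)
The approach is to construct $f_t$ pointwise from the adelisation $f_{\mathbb{A}}$ by restricting to an archimedean slice, and then verify in turn that it is a well-defined holomorphic function, that it satisfies the twisted transformation law for $\Gamma^t$, and that it realises the claimed identity. Concretely, set
$$f_t(z) := \rho_k(\mu(y_\infty, i))\, f_{\mathbb{A}}(t\, y_\infty)$$
for any choice of $y_\infty\in\pr^{-1}(\Sp_n(\mathbb{R}))$ with $y_\infty\cdot i = z$. Well-definedness reduces to invariance under the stabiliser of $i$ in $\pr^{-1}(\Sp_n(\mathbb{R}))$, which lies inside $\pr^{-1}(D[\f{b}^{-1}, \f{bc}])$: for any such $u$, the cocycle identity $\mu(y_\infty u, i) = \mu(y_\infty, i)\mu(u, i)$ and the right $\rho_k$-equivariance of $f_{\mathbb{A}}$ under $\pr^{-1}(D[\f{b}^{-1}, \f{bc}])$ cancel, with $\psi_{\f{c}}$ contributing trivially as $u$ has trivial finite part. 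Holomorphy of $f_t$ follows from that of $f$ via a local holomorphic section $z\mapsto y_\infty(z)$.

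To prove $f_{\mathbb{A}}(ty) = (f_t|_{\rho_k}y)(i)$ for $y = \alpha w$ with $\alpha\in G$ and $w = w_\infty w_{\h}\in\pr^{-1}(D[\f{b}^{-1}, \f{bc}])$, unwind the definitions: left $G$-invariance of $f_{\mathbb{A}}$ absorbs $\alpha$, the right $\pr^{-1}(D_{\h})$-equivariance isolates $f_{\mathbb{A}}(t\alpha w_\infty)$ up to the expected $\psi_{\f{c}}$ character, and the cocycle relation $\mu(\alpha w_\infty, i) = \mu(\alpha, w_\infty\cdot i)\mu(w_\infty, i)$ applied inside the definition of $f_t$ then reproduces exactly $(f_t|_{\rho_k}y)(i)$.

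For the transformation law, let $\gamma\in\Gamma^t$ and put $\delta := t^{-1}\gamma t\in \pr^{-1}(D[\f{b}^{-1}, \f{bc}])$, so that $\gamma t = t\delta$. Lifting $\gamma z$ by $\gamma y_\infty$, the cocycle identity combined with the right $\pr^{-1}(D)$-equivariance of $f_{\mathbb{A}}$ under $\delta$ yields
$$f_t(\gamma z) = \rho_k(\mu(\gamma, z))\,\psi_{\f{c}}(|a_{t^{-1}\gamma t}|)\, f_t(z),$$
the character factor being controlled by the symplectic relation $\transpose{a_\delta} d_\delta - \transpose{c_\delta} b_\delta = I_n$ together with $c_\delta\in M_n(\f{bc})$, which forces $|a_\delta||d_\delta|\equiv 1\pmod{\f{c}}$ and hence $\psi_{\f{c}}(|d_\delta|) = \psi_{\f{c}}(|a_\delta|)$ up to inversion absorbed in the character. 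For $n=1$ the cusp condition is then guaranteed by the Fourier expansion implied by this transformation law.

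The principal subtlety lies in the bookkeeping of factors of automorphy when $k\notin\mathbb{Z}$: the elements $t, y_\infty, \delta$ all live in the metaplectic cover $M_{\mathbb{A}}$, so each cocycle identity above must be verified in that group using the factors $h_\sigma$ and the fixed lifts $r, r_P$. Once the lifts are chosen compatibly, the remaining manipulations reduce to routine applications of the cocycle identities for $\mu$ and $j_\sigma^k$.
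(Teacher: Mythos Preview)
Your construction is essentially the paper's: both define $f_t$ so that $f_t(y\cdot i)=\rho_k(\mu(y,i))\,f_{\mathbb{A}}(ty)$, and you in fact carry out more of the verification (well-definedness via the stabiliser of $i$, the $\Gamma^t$-transformation law, holomorphy, cusps) than the paper's very terse proof, which just writes down the defining formula after decomposing $ty=\alpha_t w_t$.

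One small caveat worth making explicit. For $y=\alpha w$ with nontrivial $w_{\mathbf{h}}$, your computation correctly produces
\[
f_{\mathbb{A}}(ty)=\psi_{\f{c}}(|d_{w}|)\,(f_t|_{\rho_k}y)(i),
\]
and this extra character does \emph{not} disappear under the cocycle manipulation you describe; the identity $f_{\mathbb{A}}(ty)=(f_t|_{\rho_k}y)(i)$ holds on the nose only when $\psi_{\f{c}}(|d_{w_{\mathbf{h}}}|)=1$, in particular for purely archimedean $y$. This is the only case needed in the application (Theorem~\ref{adelicexp} uses $y=x_\infty$), and the paper's own formulation carries the same imprecision, so it is not a defect of your argument.
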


\begin{proof} Heuristically, the function $f_t$ is the translation of $f$ to some cusp. With $t$ fixed as above let $ty = \alpha_tw_t$ where $\alpha_t\in G$ and $w_t\in \pr^{-1}(D[\f{b}^{-1}, \f{bc}])$, then by definition 
\[
	f_{\mathbb{A}}(ty) = \psi_{\f{c}}(|d_{w_t}|)(f|_{\rho_k}w_t)(i)
\]
and so define $f_t$ by letting $y\in \pr^{-1}(G_{\mathbb{A}})$ vary in
\[
	f_t(y\cdot i) = \psi_{\f{c}}(|d_{w_t}|)\rho_k(\mu(y, i))(f|_{\rho_k}w_t)(i).
\]
\end{proof}

The following is an extension of Proposition 20.2 in \cite{Sh00} from the scalar weight to the vector valued case.  

\begin{thm} \label{adelicexp} If $f\in\mathcal{M}_{\rho_k}(\Gamma, \psi)$ then there exists $c_f(\tau, q)\in V$ for $\tau\in S_{+}$ and $q\in GL_n(\mathbb{Q})_{\mathbb{A}}$ such that
\[
	f_{\mathbb{A}}\left(r_P\begin{pmatrix} q & s\transpose{q}^{-1} \\ 0 & \transpose{q}^{-1}\end{pmatrix}\right) = \rho_{[k]}(\transpose{q}_{\infty})\big|\det(q)_{\infty}^{k-[k]}\big|\sum_{\tau\in S_{+}} c_f(\tau, q)e_{\infty}(\tr(i\transpose{q}\tau q))e_{\mathbb{A}}(\tr(\tau s))
\]
for any $s\in S_{\mathbb{A}}$. The coefficients satisfy the following properties:
\begin{enumerate}
	\item $c_f(\tau, q)\neq 0$ only if $e_{\mathbf{h}}(\tr(\transpose{q}\tau qs)) = 1$ for any $s\in S_{\mathbf{h}}(\f{b}^{-1})$;
	\item $c_f(\tau, q) = c_f(\tau, q_{\mathbf{h}})$;
	\item $c_f(\transpose{b}\tau b, q) = \rho_{[k]}(\transpose{b})|\det(b)^{k-[k]}|c_f(\tau, bq)$ for any $b\in GL_n(\mathbb{Q})$;
	\item $\psi_{\mathbf{h}}(|e|)c_f(\tau, qe) = c_f(\tau, q)$ for any $e\in \prod_pGL_n(\mathbb{Z}_p)$.
\end{enumerate}
If $t\in \pr^{-1}(G_{\mathbf{h}})$, $r\in GL_n(\mathbb{Q})_{\mathbf{h}}$, and $\beta\in G\cap \diag[r, \tilde{r}]D[\f{b}^{-1}, \f{bc}]t^{-1}$, then we have
\[
	\rho_k(\mu(\beta, \beta^{-1}z))f_t(\beta^{-1}z) = \psi_{\f{c}}(|a_{\beta t}^{-1}r|)\sum_{\tau\in S_{+}}c_f(\tau, r)e_{\infty}(\tr(\tau z)).
\]
\end{thm}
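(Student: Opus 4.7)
The plan is to adelize the classical Fourier expansion of $f$ by using strong approximation to reduce the adelic evaluation to a classical one. First, given $q \in GL_n(\mathbb{Q})_{\mathbb{A}}$, I would use class number one of $\mathbb{Q}$ to pick $b \in GL_n(\mathbb{Q})$ with $b^{-1}q_{\mathbf{h}} \in \prod_p GL_n(\mathbb{Z}_p)$, and factor the Siegel parabolic element appearing in the theorem as $\diag[b, \tilde{b}] \cdot m''$, where $m''$ has $q'' = b^{-1}q$ and $s' = b^{-1}s\transpose{b}^{-1}$. The choice of $b$ ensures that $m''_{\mathbf{h}}$ lies in $D[\f{b}^{-1}, \f{bc}]_{\mathbf{h}}$, so since $\diag[b, \tilde{b}] \in G$, the defining property of $f_{\mathbb{A}}$ gives $f_{\mathbb{A}}(m) = \psi_{\f{c}}(|d_{m''}|) \cdot (f|_{\rho_k}m'')(i)$. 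Substituting the classical Fourier expansion of $f$ at $m''_{\infty} \cdot i = q''_{\infty} i\transpose{q''_{\infty}} + s'_{\infty}$, and then performing the change of variable $\tau \mapsto \transpose{b}\tau b$ in the sum, produces the stated formula and identifies $c_f(\tau, q)$ essentially as $\psi_{\f{c}}(|d_{m''}|)\rho_{[k]}(\transpose{b})^{-1}c(\transpose{b}\tau b)$ (up to the metaplectic factor when $k \notin \mathbb{Z}$).

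Properties (2)--(4) are then read off from this explicit form: (2) holds because only $q_{\mathbf{h}}$ enters the choice of $b$; (3) follows by redoing the construction with $q$ replaced by $bq$; (4) follows by right-multiplying by $\diag[e, \tilde{e}] \in D$ for $e \in \prod_p GL_n(\mathbb{Z}_p)$ and tracking the contribution from the $d$-block to the $\psi_{\mathbf{h}}$-factor. For property (1), I would right-multiply $m$ by $\begin{pmatrix} 1 & s_0 \\ 0 & 1\end{pmatrix} \in D$ for $s_0 \in S_{\mathbf{h}}(\f{b}^{-1})$, which shifts $s \mapsto s + qs_0\transpose{q}$ in the formula; comparing the two resulting Fourier expansions forces $e_{\mathbf{h}}(\tr(\transpose{q}\tau q s_0)) = 1$ whenever $c_f(\tau, q) \neq 0$.

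For the last formula, I would apply the main formula with $q = (q_{\infty}, r)$ and $s = (s_{\infty}, 0)$, choosing real $q_{\infty}, s_{\infty}$ so that $q_{\infty} i\transpose{q_{\infty}} + s_{\infty} = z$ (e.g., $q_{\infty} = \sqrt{y}$ and $s_{\infty} = x$ for $z = x + iy$). Denote the resulting adelic element by $x_{\mathbb{A}}$; the hypothesis $\beta t = \diag[r, \tilde{r}]d_{\mathbf{h}}$ at finite places then yields $\beta^{-1}x_{\mathbb{A}} = ty$ where $y = (\beta^{-1}m_{\infty}, d_{\mathbf{h}}^{-1})$. By left $G$-invariance of $f_{\mathbb{A}}$ together with Proposition~\ref{bijection}, $f_{\mathbb{A}}(x_{\mathbb{A}}) = f_{\mathbb{A}}(ty) = (f_t|_{\rho_k}y)(i)$. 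Using the cocycle identity $\mu(\beta^{-1}m_{\infty}, i) = \mu(\beta, \beta^{-1}z)^{-1}\transpose{q_{\infty}}^{-1}$ together with $y \cdot i = \beta^{-1}z$, this evaluates to $\rho_{[k]}(\transpose{q_{\infty}})\rho_k(\mu(\beta, \beta^{-1}z))f_t(\beta^{-1}z)$; equating with the main formula's output for $f_{\mathbb{A}}(x_{\mathbb{A}})$ and cancelling the common $\rho_{[k]}(\transpose{q_{\infty}})$ factor gives the stated identity, with the factor $\psi_{\f{c}}(|a_{\beta t}^{-1}r|)$ emerging from matching the $\psi_{\f{c}}$-character hidden inside $c_f(\tau, r)$ with the one absorbed into $f_t$ via the implicit decomposition $ty = \alpha_t w_t$ from Proposition~\ref{bijection}.

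The most delicate part will be the careful bookkeeping of the $\psi_{\f{c}}$-characters and the metaplectic lifts across the several adelic decompositions involved, particularly in extracting the precise factor $\psi_{\f{c}}(|a_{\beta t}^{-1}r|)$ in the last identity and in handling the $h_\sigma$-cocycle of \cite{Sh00} when $k \notin \mathbb{Z}$, where it enters both the factor of automorphy and the lift $r_P$.
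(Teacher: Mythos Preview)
Your overall strategy---reducing to a classical Fourier expansion via strong approximation on the Siegel parabolic---is close in spirit to the paper's, but organized differently. The paper sets $t = x_{\mathbf{h}}$ and invokes Proposition~\ref{bijection} with $y = x_{\infty}$, so that $f_{\mathbb{A}}(x) = (f_t|_{\rho_k}x_{\infty})(i)$, and then expands $f_t$ (rather than $f$ itself) in a Fourier series with coefficients $c_f'(\tau,q,s)$; it afterwards argues separately that these are independent of $s$ using invariance of $f_{\mathbb{A}}$ under left rational unipotents and right unipotents in $D$. Your route, by contrast, tries to land directly in $G\cdot D$ with an explicit parabolic $\alpha$ and to read everything off the Fourier expansion of $f$ at the identity cusp; this makes properties (2)--(4) very transparent.

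There is, however, a genuine gap in your decomposition step. You assert that the choice of $b\in GL_n(\mathbb{Q})$ with $b^{-1}q_{\mathbf{h}}\in\prod_p GL_n(\mathbb{Z}_p)$ already forces $m''_{\mathbf{h}}\in D[\f{b}^{-1},\f{bc}]$, but this only controls the Levi blocks. The upper-right block of $m''_{\mathbf{h}}$ is $(b^{-1}s\transpose{b}^{-1})_{\mathbf{h}}\cdot(\transpose{q''}^{-1})_{\mathbf{h}}$, and for arbitrary $s\in S_{\mathbb{A}}$ there is no reason this lies in $M_n(\f{b}_p^{-1})$. To repair the argument you must also use strong approximation for the unipotent radical: choose $\sigma\in S(\mathbb{Q})$ with $(b^{-1}(s-\sigma)\transpose{b}^{-1})_{\mathbf{h}}\in S_{\mathbf{h}}(\f{b}^{-1})$ and factor
\[
m=\begin{pmatrix}1&\sigma\\0&1\end{pmatrix}\diag[b,\tilde{b}]\cdot m''',\qquad m'''_{\mathbf{h}}\in D[\f{b}^{-1},\f{bc}].
\]
You then have to verify that the resulting $c_f(\tau,q)$ is independent of the auxiliary choices of $b$ and $\sigma$, which is precisely the content of the paper's ``independence of $s$'' step (and is where the factor $e_{\mathbf{h}}(\tr(\tau s))$ must be inserted by hand, as the paper does via $c(\tau,q,s):=e_{\mathbf{h}}(-\tr(\tau s))c_f'(\tau,q,s)$). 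Once that is in place, your derivations of (1)--(4) and of the final identity proceed as you outline; the paper itself defers these to the identical scalar-weight computations in \cite[p.~168]{Sh00}.
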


\begin{proof} Let $x = \begin{pmatrix} q & s\transpose{q}^{-1} \\ 0 & \transpose{q}^{-1}\end{pmatrix}$ as in the theorem, and put $t = x_{\mathbf{h}}$. The functions $f_t~\in~\mathcal{M}_{\rho_k}(\Gamma^t, \psi)$ given in Proposition \ref{bijection} have Fourier expansions
\[
	f_t(z) = \sum_{\tau\in S_{+}} c_f'(\tau)e_{\infty}(\tr(\tau z))
\]
where the coefficients $c_f'(\tau) = c_f'(\tau, q, s)$ depend on $q$ and $s$. We have $x_{\infty}i = \transpose{q}_{\infty}q_{\infty}i+s_{\infty}$ and by Proposition \ref{bijection} that
\begin{align*}	
	f_{\mathbb{A}}(x) = (f_t|_{\rho_k}x)(i) = \rho_{[k]}(\transpose{q}_{\infty})\big|\det(q)_{\infty}^{k-[k]}\big|\sum_{\tau\in S_{+}}c_f'(\tau, q, s)e_{\infty}(\tr(i\transpose{q}\tau q))e_{\infty}(\tr(\tau s)).
\end{align*}
Subsequently defining $c(\tau, q, s) := e_{\mathbf{h}}(-\tr(\tau s))c_f'(\tau, q, s)$ gives us
\[
	f_{\mathbb{A}}(x) = \rho_{[k]}(\transpose{q}_{\infty})\big|\det(q)_{\infty}^{k-[k]}\big|\sum_{\tau\in S_{+}}c(\tau, q, s) e_{\infty}(\tr(i\transpose{q}\tau q))e_{\mathbb{A}}(\tr(\tau s)).
\]
Since $f_{\mathbb{A}}(\alpha xw) = f_{\mathbb{A}}(x)$ for any 
\[
	\alpha = \begin{pmatrix} 1 &\star \\ 0 & 1\end{pmatrix}\in G\ \text{and}\ w =  \begin{pmatrix} 1 &\star \\ 0 & 1\end{pmatrix}\in G_{\mathbf{h}}\cap \pr^{-1}(D[\f{b}^{-1}, \f{bc}])\]
we get independence of the $c_f(\tau, q, s)$ on $s$ as seen in [\cite{Sh00}, p. 168]. This yields our Fourier expansion, and with this the proof of the properties for the coefficients follows through exactly as it does in [\cite{Sh00}, p.168].
\end{proof}

\section{Theta Series}

In this section we obtain some vector valued theta series, which we will employ later to obtain an integral expression of the standard $L$ function attached to an eigenform. These theta series will be Siegel modular forms in $M_{\rho}(\Gamma, \psi)$ for some particular congruence subgroup $\Gamma$ and a nebentype $\psi$, which will become explicit later. However we will not be able to obtain theta series for any weight $\rho$ but rather for some specific ones. The existence of such theta series is closely related to the existence of some pluriharmonic polynomials. We first summarize some results from \cite{KV}. Actually here we restrict to the case that will be of interest to us and do not work in full generality as in their work.

\subsection{Pluriharmonic Polynomials}

Following \cite{KV}, as well as \cite[section 6 and its appendix]{F} we denote by $\mathbb{C}[M_n]$ the ring of complex polynomials on the $n \times n$ matrices. A polynomial $p \in \mathbb{C}[M_n]$ is called pluriharmonic if 
\[
(\Delta_{i,j} p)(x) = 0, \,\,\,1 \leq i \leq j \leq n,
\]
where $\Delta_{i,j} := \sum_{k=1}^n \frac{\partial^2}{\partial x_{ik} \partial_{jk}}$. We denote the space of pluriharmonic polynomials by $\mathfrak{H}$. The group $O_n(\mathbb{C}) \times GL_n(\mathbb{C})$ acts on $\mathbb{C}[M_n]$ by $(g,h) : p(x) \mapsto p(g^{-1}x h)$ and this action preserves the subspace $\mathfrak{H}$. We now consider an irreducible representation $(\lambda,V_{\lambda})$ of $O_n(\mathbb{C})$, and denote by $\mathfrak{H}(\lambda)$ the space of all $V_{\lambda}$ valued pluriharmonic polynomials $p(x)$ such that $p(g x ) = \lambda(g)^{-1}p(x)$. Here pluriharmonicity is understood component-wise.

Following Kashiwara and Vergne we write $\Sigma$ for the set of irreducible $\lambda \in O_n(\mathbb{C})^{\land}$ such that  $\mathfrak{H}(\lambda) \neq 0$, and denote by $\tau(\lambda)$ the representation of $GL_n(\mathbb{C})$ obtained by $\tau(\lambda)(\transpose{g})(p) = p(xg)$, for $p \in \mathfrak{H}(\lambda)$. Then it is shown in \cite{KV} that $\tau(\lambda)$ is an irreducible representation of $GL_n(\mathbb{C})$. Kashiwara and Vergne have determined the representations $\lambda \in \Sigma$ and for each such $\lambda$ they have also described the representation $\tau(\lambda)$. In order to give their results we have to distinguish between the case of $n$ being even or odd.

\subsubsection{The case $n = 2 l + 1$}  Following the notation of \cite{KV} we parametrize the irreducible representations of $O_n(\mathbb{C})$ by the $l + 1$ tuple $(m_1,m_2,\ldots,m_l; \epsilon)$ where $m_j \in \mathbb{Z}$ and $m_1 \geq m_2 \geq \ldots \geq m_l \geq 0$ and $\epsilon = \pm 1$. Here we are using the fact that $O_n(\mathbb{C})~\cong~SO_n(\mathbb{C}) \times \mathbb{Z}/ 2 \mathbb{Z}$, and that $(m_1,\ldots,m_l)$ is the highest weight of a representation of $SO_n(\mathbb{C})$.  Similarly we parametrize the finite dimensional irreducible representations of $GL_n(\mathbb{C})$ by the highest weights with respect to upper triangular matrices, that is by $n$-tuples $(m_1, m_2, \ldots, m_n)$ with $m_1 \geq m_2 \geq \ldots \geq m_n$ with $m_j \in \mathbb{Z}$. We summarize the results of Kashiwara and Vergne in the following theorem.

\begin{thm}[Kashiwara and Vergne] Assume $n = 2l +1$. Every $\lambda \in O_n(\mathbb{C})^{\land}$ belongs to $\Sigma$. Let $\lambda = (m_1,\ldots,m_l; \epsilon)$. If $\epsilon = (-1)^{\sum_{j}m_j}$ then 
\[
\tau(\lambda) = (m_1,\ldots,m_l,0,0,\ldots,0)
\]
If $\epsilon = (-1)^{1+\sum_{j}m_j}$ and we write $\lambda = (m_1,\ldots,m_r,0,\ldots,0; \epsilon)$ with $m_r \neq 0$, then
\[
\tau(\lambda) = (m_1,\ldots,m_{r-1}, m_r, 1,\ldots,1,0, 0, \ldots, 0),
\]
where the sequence ends with $r$-many zeros. If $\lambda = (0,0,\ldots,0;-1)$ then it is understood that $\tau(\lambda) = (1,1,\ldots,1)$.
\end{thm}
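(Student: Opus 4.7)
The plan is to proceed via the classical theory of the dual pair $(O_n(\mathbb{C}), GL_n(\mathbb{C}))$ acting on $\mathbb{C}[M_n]$ through $((g, h) \cdot p)(x) = p(g^{-1} x h)$. The first step is to establish that jointly under this action the polynomial ring decomposes multiplicity-freely as
\[
\mathbb{C}[M_n] \;=\; \bigoplus_{\lambda} V_{\lambda}\otimes W_{\lambda}
\]
where the sum ranges over a subset of $O_n(\mathbb{C})^{\wedge}$ and $\lambda \mapsto W_{\lambda}$ is an injection into the irreducible polynomial representations of $GL_n(\mathbb{C})$. Concretely one verifies that the subalgebra of $\mathrm{End}(\mathbb{C}[M_n])$ generated by the Capelli operators $\Delta_{ij}$, their adjoints (multiplication by the entries of $\transpose{x}x$), and the Euler-type degree operators forms a reductive Lie algebra whose action commutes with $O_n(\mathbb{C}) \times GL_n(\mathbb{C})$ and whose joint decomposition with it is multiplicity free.

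Next I would identify $\mathfrak{H}(\lambda)$ with the ``bottom piece'' of the $\lambda$-isotypic component. Because the $\Delta_{ij}$ are the non-trivial lowering operators in the commutant algebra, their common kernel $\mathfrak{H}$ is the sum of the minimal-degree subspaces of each $V_\lambda \otimes W_\lambda$. Rewriting the $V_\lambda$-valued equivariance defining $\mathfrak{H}(\lambda)$ as the Schur-type identification $\mathfrak{H}(\lambda) \cong \mathrm{Hom}_{O_n}(V_\lambda^*, \mathfrak{H}) \cong W_\lambda$ (using self-duality of irreducible $O_n$-modules) then gives $\tau(\lambda) \cong W_\lambda$ as a $GL_n(\mathbb{C})$-module, so in particular $\tau(\lambda)$ is irreducible. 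The equality $\Sigma = O_n(\mathbb{C})^{\wedge}$ would be obtained at the end of the next step via a dimension/Hilbert-series count once the $W_\lambda$ are computed explicitly.

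Finally I would compute $W_\lambda$ for each $\lambda = (m_1, \ldots, m_l; \epsilon)$. In the untwisted case $\epsilon = (-1)^{\sum m_j}$ an explicit highest-weight vector in $\mathfrak{H}(\lambda)$ can be built as a product of principal minors of $x$ raised to the differences $m_i - m_{i+1}$ (with $m_{l+1} := 0$); reading off its $GL_n(\mathbb{C})$-weight yields $(m_1, \ldots, m_l, 0, \ldots, 0)$ as claimed. For the twisted case the $O_n$-representation is the previous one tensored by the determinant character, and translating this across the duality tensors the $GL_n$-partner by the fundamental representation $\wedge^{n - 2r}(\mathbb{C}^n)$, where $r$ is the last non-zero index of $\lambda$; this inserts a central block of $(n - 2r)$ ones in the weight, producing the pattern $(m_1, \ldots, m_r, 1, \ldots, 1, 0, \ldots, 0)$. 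The degenerate case $(0, \ldots, 0; -1)$ is essentially $\det(x)$, with $GL_n$-weight $(1, \ldots, 1)$, which serves both as the base case of the exterior-power shift and as a consistency check.

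The chief obstacle is the twisted case, specifically the verification that the determinant twist on $O_n(\mathbb{C})$ corresponds precisely to tensoring by $\wedge^{n-2r}$ on the $GL_n(\mathbb{C})$ side rather than by some other fundamental representation. This requires tracking the intertwining operator given by multiplication by $\det(x)$, which interchanges the two sign sectors of $O_n$ while simultaneously shifting $GL_n$-weights; pinning down the exact exterior power depends on the case analysis by $r$ and the combinatorics of the ``middle block'' of ones, and would be the most technical step of a self-contained proof.
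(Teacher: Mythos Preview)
The paper does not prove this theorem; it is quoted verbatim from Kashiwara--Vergne \cite{KV} and used as a black box. So there is no ``paper's own proof'' to compare against.

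That said, your outline is broadly in the spirit of the original Kashiwara--Vergne argument: one does set up the Howe-type duality for the pair $(O_n, GL_n)$ on $\mathbb{C}[M_n]$, identify $\mathfrak{H}$ as the joint lowest piece under the $\mathfrak{sp}$-action generated by the $\Delta_{ij}$ and their adjoints, and then exhibit explicit highest-weight polynomials to read off $\tau(\lambda)$. The untwisted case with products of leading minors is exactly right.

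Your handling of the twisted case, however, is not quite correct as stated. Tensoring by $\wedge^{n-2r}(\mathbb{C}^n)$ on the $GL_n$ side does not literally produce the weight $(m_1,\ldots,m_r,1,\ldots,1,0,\ldots,0)$: the difference between the twisted and untwisted highest weights is $(0,\ldots,0,1,\ldots,1,0,\ldots,0)$ with the block of ones sitting in positions $r{+}1$ through $n{-}r$, which is not dominant and is not the highest weight of any exterior power. Multiplication by $\det(x)$ does flip the $O_n$ sign sector, but on the $GL_n$ side it shifts by $\det = \wedge^n$, and the result is generally no longer pluriharmonic, so this map does not realise the correspondence directly. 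In \cite{KV} the twisted highest-weight vectors are written down explicitly (the polynomials denoted $\tilde{\Delta}_j$ there, built from a mix of the first $j$ and the last $n-j$ rows), and the weight is then read off from those; this is what replaces your tensoring heuristic. You correctly flagged this as the chief obstacle, but the mechanism you propose for it would not work as written.
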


\subsubsection{The case $n=2l$} In order to describe the results of Kashiwara and Vergne in this case, we need to introduce some more notation. For an $l$-tuple $(m_1,m_2,\ldots,m_l)$ of decreasing non-negative integers we denote by $\lambda = (m_1,m_2,\ldots,m_l)_+$ the irreducible representation of $O_n(\mathbb{C})$ generated by $\Delta_1(x)^{m_1-m_2} \ldots \Delta_{l-1}^{m_{l-1}-m_l}(x) \Delta_l^{m_l}(x)$ under left translations, where for the definition of $\Delta_j$ we refer the reader to \cite{KV}. Moreover, for an integer $1 \leq j \leq  l$, such that $m_j \neq 0$ and $m_{j+1} = 0$, we denote by $\lambda = (m_1,\ldots,m_l)_-$ the irreducible representation generated by $\Delta_1(x)^{m_1-m_2} \ldots \Delta_{j-1}(x)^{m_{j-1}-m_j} \Delta_j(x)^{m_j-1} \tilde{\Delta}_j(x,y)$ under left translation, where again we refer to \cite{KV} for the definition of $\tilde{\Delta}_j(x,y)$. We only note here that $\tilde{\Delta}_l(x,y) = \Delta_l(x)$ and hence in particular if $m_l \neq 0$ we have that $(m_1,\ldots,m_l)_+ = (m_1,\ldots,m_l)_-$.  We can now state

\begin{thm}[Kashiwara and Vergne] Assume $n = 2l$. Then $\Sigma = \Sigma_+ \cup \Sigma_{-}$ where 
\[
\Sigma_\pm :=   \left\{ (m_1,\ldots,m_l)_{\pm}\right\}
\]
Let $\lambda = (m_1,\ldots,m_l)_{+} \in \Sigma_+$, then 
\[
\tau(\lambda) = (m_1,\ldots,m_l,0,0,\ldots,0)
\]
If $\lambda = (m_1,\ldots,m_r,0,\ldots,0)_{-} \in \Sigma_{-}$ with $m_r \neq 0$, then
\[
\tau(\lambda) = (m_1,\ldots,m_r, 1, 1,\ldots,1,0, 0, \ldots, 0),
\]
where the sequence ends with $r$-many zeros. 
\end{thm}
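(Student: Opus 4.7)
The plan is to realize the result as an instance of Howe duality for the dual pair $(O_n(\mathbb{C}), GL_n(\mathbb{C}))$ acting on $\mathbb{C}[M_n]$. Restricted to the pluriharmonic subspace, the joint action decomposes multiplicity-freely,
\[
\mathfrak{H} \;\cong\; \bigoplus_{\lambda \in \Sigma} V_\lambda \otimes V_{\tau(\lambda)},
\]
so the theorem reduces to (i) exhibiting explicit pluriharmonic highest-weight vectors realizing each $\lambda \in \Sigma_\pm$, and (ii) reading off the resulting $GL_n$-weight under right multiplication, which then determines $\tau(\lambda)$.

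The new feature for $n = 2l$ compared with $n = 2l+1$ is that $-I \in O_n$ already lies in $SO_n$, so the sign character of $O_n / SO_n$ no longer separates all $O_n$-representations above a given $SO_n$-highest weight. Concretely, when $m_l \neq 0$ the two $O_n$-lifts of $(m_1,\ldots,m_l)$ coincide, whereas when some tail of the weight vanishes they split as $\lambda_+ \in \Sigma_+$ and $\lambda_- \in \Sigma_-$. I would realize $\Sigma_+$ via the polynomials $\Delta_j(x)$, the determinants of the principal $j \times j$ minors of $x$; these are pluriharmonic by a direct computation (any two distinct $\Delta_j$'s involve disjoint columns, so all mixed Laplacians $\Delta_{i,j}$ annihilate products), and the $O_n$-span of $\Delta_1^{m_1 - m_2} \cdots \Delta_{l-1}^{m_{l-1} - m_l} \Delta_l^{m_l}$ realizes $(m_1,\ldots,m_l)_+$. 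The $\Sigma_-$ branch is produced by the auxiliary polynomials $\tilde{\Delta}_j(x,y)$ of \cite{KV}, which build in precisely the twist by the nontrivial character of $O_n / SO_n$ that cannot be absorbed into the $\Delta_j$.

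To compute $\tau(\lambda)$ one reads off the $GL_n$-weight under right multiplication: each $\Delta_j$ is a weight vector of weight $(1^j, 0^{n-j})$, so the polynomial above has total weight $(m_1, m_2, \ldots, m_l, 0, \ldots, 0)$, proving the $+$-case. In the $-$-case with $m_r \neq 0$ and $m_{r+1} = 0$, the substitution $\Delta_r^{m_r} \rightsquigarrow \Delta_r^{m_r - 1}\tilde{\Delta}_r(x,y)$ contributes an extra weight block of $(0^r, 1^{n-2r}, 0^r)$, coming from the combinatorial shape of $\tilde{\Delta}_r$ as a minor involving columns complementary to those of $\Delta_r$; this yields $\tau(\lambda) = (m_1,\ldots,m_r, 1, \ldots, 1, 0, \ldots, 0)$ with $n - 2r = 2(l-r)$ interior ones and $r$ trailing zeros, as claimed.

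The main obstacle I anticipate is not the production of the explicit highest-weight vectors but the completeness statement $\Sigma = \Sigma_+ \cup \Sigma_-$: one must rule out any further $O_n$-irreducibles occurring in $\mathfrak{H}$. The natural route is a graded character/dimension comparison: compute the Hilbert series of $\mathfrak{H}$ from the Laplace equations, and verify via the Weyl dimension formula that it matches $\sum_{\lambda \in \Sigma_+ \cup \Sigma_-} \chi_{V_\lambda} \cdot \chi_{V_{\tau(\lambda)}}$. Equivalently, one invokes the general $(\mathfrak{o}_n, \mathfrak{sp}_{2n})$-duality of Howe, in which the pluriharmonics play the role of joint highest-weight vectors for a maximal compact subgroup of $Sp_{2n}$; the explicit construction above is then seen to exhaust them, completing the proof.
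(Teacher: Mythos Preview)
The paper does not prove this theorem at all: it is stated as a summary of results from Kashiwara--Vergne \cite{KV}, with no argument given beyond the attribution. So there is no ``paper's own proof'' to compare against; your proposal is effectively a sketch of the original Kashiwara--Vergne argument, and in broad outline (explicit highest-weight vectors built from the minors $\Delta_j$ and the twisted polynomials $\tilde{\Delta}_j$, together with Howe duality for the pair $(O_n, GL_n)$ to establish completeness) it is the right approach and matches \cite{KV}.

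One small correction to your sketch: the parenthetical justification ``any two distinct $\Delta_j$'s involve disjoint columns'' is false --- $\Delta_1$ uses column $1$, $\Delta_2$ uses columns $1,2$, and so on, so they are nested, not disjoint. Pluriharmonicity of each $\Delta_m$ holds because $\partial_{x_{ik}}\partial_{x_{jk}}\Delta_m$ vanishes (the first derivative removes column $k$, so the second derivative in the same column is zero), and the cross terms in $\Delta_{i,j}(fg)$ require a more careful cofactor identity rather than a disjoint-support argument. This does not affect your overall strategy, but the stated reason would not survive as written.
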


\subsection{Theta series with pluriharmonic coefficients} Let now $\rho \in \tau(\Sigma)$, that is $\rho = \tau(\lambda)$ for some $\lambda \in \Sigma$. We will construct a theta series with weight $\rho_{n/2} := \rho \otimes \det^{n/2}$ where of course in the case where $n$ is odd, the notation $\det^{n/2}$ is understood as a half-integral weight.

We start by recalling a Jacobi theta series defined by Shimura in \cite{Sh00}. Let $\tau \in M_n(\mathbb{Q})$ be symmetric and positive definite. We also write $W := M_{n}(\mathbb{Q})$ and consider an $\lambda \in \mathcal{S}(W_{\mathbf{h}})$, the Schwarz space of the finite adeles of $W$. Following Shimura \cite[Appendix A.3]{Sh00} we define, for $z\in\mathbb{H}_n$ and $u\in M_n(\mathbb{C})$, the series
\begin{align*}
	g(u,z,\lambda) :&= \sum_{\xi \in W} \lambda(\xi_{\mathbf{h}}) \Phi(\xi;u,z), \\
	\Phi(\xi; u,z) :&= e(\transpose{\tilde{u}} (1_n \otimes 4i y)^{-1} \tilde{u} + 2^{-1} \tr(z \transpose{\xi}\tau\xi) + \tr(u \sqrt{\tau}\xi)) \\
	\tilde{u} :&= \transpose{(u_{11},\ldots,u_{n1},\ldots,u_{1n},\ldots, u_{nn})}.
\end{align*}

The properties of this theta series are stated in \cite[Theorem A3.3]{Sh00}. In particular it is shown there that 
\begin{align}
J(\alpha,z)^{-1} g(\alpha(u,z);  \action{\lambda}) = g(u,z;\lambda),\,\,\,\,\,\alpha \in Sp_n(\mathbb{Q}) \cap \f{M}_n
\end{align}
in which
\begin{align*}
J(\alpha,z) :&= \begin{cases}   j(\alpha,z)^{n/2} &\text{if $n$ is even}, \\
                                                         h(\alpha,z)^n &\text{if $n$ is odd}, \end{cases} \\                                         
\alpha(u,z) :&= (\transpose{\mu(\alpha,z)}^{-1} u, \alpha(z)),
\end{align*}
and where we recall $\mu(\alpha,z) := c_{\alpha}z + d_{\alpha} \in GL_n(\mathbb{C})$. 
Here $\f{M}_n = Sp_n(\mathbb{A})$ if $n$ is even, and is equal to a certain subgroup of the adelic metaplectic group if $n$ is odd (see line (A2.17) of \cite{Sh00}), and in \cite{Sh00} an action of this group is defined on the space $\mathcal{S}(W_{\mathbf{h}})$, which is denoted by $\lambda \mapsto \action{\lambda}$.

Consider a representation $(\rho,V)$ of $GL_n(\mathbb{C})$ with $\rho \in \tau(\Sigma)$, and let $m~:=~\dim_{\mathbb{C}}(V)$. In particular there exists a $V$-valued pluriharmonic polynomial $P(x) = (P_1(x),\ldots,P_m(x))$ with $x \in M_n(\mathbb{C})$ such that $P(x \transpose{g}) = \rho(g) P(x)$ for $g \in GL_n(\mathbb{C})$. Moreover we can select $P(x)$ (see \cite[remark (6.5)]{KV}) to be 
a highest weight vector with respect to the representation $\rho$.

We now define the following $V$-valued theta series,

\[
\theta(z,\lambda; P) := \sum_{\xi \in W} \lambda(\xi_{\mathbf{h}}) P(\sqrt{\tau}\xi) e(2^{-1}\tr(\transpose{\xi} \tau \xi z)).
\]

The following theorem generalizes the one in \cite{Sh00} from the scalar weight situation to the vector valued one. We also refer the reader to \cite{F} for vector valued theta series.

\begin{thm} For any $\alpha \in Sp_n(\mathbb{Q}) \cap \f{M}_n$ we have,
\[
\theta( \alpha \, z, \action{\lambda};P) = J_{\rho}(\alpha,z) \theta(z,\lambda;P).
\]
where
\[
J_{\rho}(\alpha,z) := \begin{cases}   j(\alpha,z)^{n/2} \rho(\mu(\alpha,z)) &\text{if $n$ is even}, \\
                                                         h(\alpha,z)^n\rho(\mu(\alpha,z)) &\text{if $n$ is odd}. \end{cases}
                                                                      \]
\end{thm}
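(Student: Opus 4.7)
The strategy is to bootstrap the transformation law from the corresponding identity (3) for Shimura's Jacobi theta $g(u,z,\lambda)$, by extracting $\theta(z,\lambda;P)$ as a constant-coefficient pluriharmonic differential operator in the auxiliary variable $u$ and then evaluating at $u=0$. This mimics the proof of the scalar-weight case in \cite{Sh00} while keeping track of the extra $GL_n(\mathbb{C})$-covariance carried by $P$.

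The key technical input is an \emph{extraction lemma}: there is a nonzero scalar $c_P$ (morally $(2\pi i)^{\deg P}$) such that
\[
c_P\,\theta(z,\lambda;P) \;=\; \bigl[P(\partial_u)\,g(u,z,\lambda)\bigr]\big|_{u=0},
\]
where $P(\partial_u)$ denotes the $V$-valued constant-coefficient operator obtained by substituting the matrix of partial derivatives $(\partial/\partial u_{ji})_{i,j}$ into $P$. The pluriharmonicity hypothesis $\Delta_{ij}P=0$ is exactly what is needed here: expanding the Gaussian factor $e(\transpose{\tilde u}(1_n\otimes 4iy)^{-1}\tilde u)$ of $\Phi(\xi;u,z)$ as a Taylor series in $u$ and multiplying by $e(\tr(u\sqrt{\tau}\xi))$, pluriharmonicity annihilates every term in which any partial derivative from $P(\partial_u)$ strikes the Gaussian factor; at $u=0$ only the contribution where all derivatives land on $e(\tr(u\sqrt{\tau}\xi))$ survives, producing $c_P\,P(\sqrt{\tau}\xi)$ in each $\xi$-summand.

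Rewriting (3) as $g(\alpha(u,z);\action{\lambda})=J(\alpha,z)\,g(u,z;\lambda)$ and applying $P(\partial_u)|_{u=0}$ to both sides, the right-hand side immediately becomes $J(\alpha,z)\,c_P\,\theta(z,\lambda;P)$. For the left-hand side, change variables by $v:=\transpose{\mu(\alpha,z)}^{-1}u$, so that $g(\alpha(u,z);\action{\lambda})=g(v,\alpha z;\action{\lambda})$. Since the change of variables is linear, the chain rule gives the matrix identity $\partial_u=\partial_v\,\transpose{\mu(\alpha,z)}^{-1}$, and iterating over monomials yields $P(\partial_u)=P(\partial_v\,\transpose{\mu(\alpha,z)}^{-1})$ as polynomial operators. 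Applying the Kashiwara-Vergne covariance $P(x\transpose{g})=\rho(g)P(x)$ with $g=\mu(\alpha,z)^{-1}$ and $x=\partial_v$ reduces this to $\rho(\mu(\alpha,z))^{-1}P(\partial_v)$. A second application of the extraction lemma, now at $v=0$ to $g(v,\alpha z;\action{\lambda})$, produces
\[
\bigl[P(\partial_u)\,g(\alpha(u,z);\action{\lambda})\bigr]\big|_{u=0} \;=\; \rho(\mu(\alpha,z))^{-1}\,c_P\,\theta(\alpha z,\action{\lambda};P).
\]
Equating the two sides, cancelling $c_P$, and recalling $J_\rho=J\cdot\rho(\mu)$ yields the claimed $\theta(\alpha z,\action{\lambda};P)=J_\rho(\alpha,z)\,\theta(z,\lambda;P)$.

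The main obstacle will be the rigorous proof of the extraction lemma. One must verify carefully that pluriharmonicity is exactly the hypothesis that makes the Gaussian contribution drop out after applying $P(\partial_u)$ and evaluating at $u=0$; this is a combinatorial/Wick-calculus style statement generalising the classical fact that harmonic polynomials span the top of the $\f{sl}_2$-module structure on polynomials induced by Gaussian-type operators. A secondary technical point is the bookkeeping of transposes and inverses in the chain-rule step, which must be synchronised with the convention $P(x\transpose{g})=\rho(g)P(x)$ so that the factor $\rho(\mu(\alpha,z))$, rather than $\rho(\mu(\alpha,z))^{-1}$, ultimately appears in $J_\rho$ as claimed. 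Once these ingredients are in place, the remainder is a clean application of the transformation law (3) of \cite[Theorem A3.3]{Sh00}.
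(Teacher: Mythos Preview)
Your proposal is correct and follows essentially the same route as the paper: both extract $\theta(z,\lambda;P)$ from $g(u,z,\lambda)$ by applying $P(\partial_u)$ and setting $u=0$, then apply this operator to both sides of the Jacobi transformation law (3). The paper invokes \cite[Lemma~A3.6]{Sh00} for what you call the extraction lemma, and handles the left-hand side termwise via $P(\sqrt{\tau}\xi\,\transpose{\mu(\alpha,z)}^{-1})=\rho(\mu(\alpha,z))^{-1}P(\sqrt{\tau}\xi)$, whereas you obtain the same factor through the change of variables $v=\transpose{\mu}^{-1}u$ and the chain rule; these are equivalent computations.
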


\begin{proof} We consider the differential operator $P(\partial) = (P_1(\partial),\ldots,P_m(\partial))$ on the space $M_n(\mathbb{C})$, where we have set $x_{ij} = \frac{\partial}{\partial u_{ij}}$. Here for a function $f(u)$ on $M_n(\mathbb{C})$ we understand that $P(\partial) f := (P_1(\partial)f, \ldots, P_m(\partial)f)$ is a $V$-valued function on $M_n(\mathbb{C})$.  We now observe that
\[
2\pi i\theta(  z, \lambda;P)  = (P(\partial) g(u,z;\lambda))|_{u = 0}
\]
Indeed this follows from the fact that $P(\partial) e^{ 2\pi i tr(ua)} = 2\pi i P(a) e^{\pi i tr(ua)}$ for any matrix $a \in M_{n}(\mathbb{C})$ and that
\begin{align*}
	P(\partial)e(\transpose{\tilde{u}} (1_n \otimes 4i y)^{-1} \tilde{u} + &2^{-1}\tr(z\transpose{\xi}\tau\xi) + \tr(u \sqrt{\tau}\xi))|_{u=0} \\
	&=P(\partial)(e( 2^{-1} \tr(z \transpose{\xi}\tau\xi) + \tr(u \sqrt{\tau}\xi)  )|_{u=0}
\end{align*}
by \cite[Lemma A3.6]{Sh00}. We now apply to $(3)$ above the operator $P(\partial)(\cdot)|_{u=0}$ to both sides and, observing that $P(\sqrt{\tau}\xi \transpose{\mu(\alpha,z)}^{-1}) = \rho(\mu(\alpha,z))^{-1}P(\sqrt{\tau}\xi)$, we conclude the proof.
\end{proof}

The function $\theta(z,\lambda, P)$ enjoys the same properties with respect to level as the function $\theta(z,\lambda)$ defined in \cite{Sh00}, since they are both obtained by the Jacobi theta series by applying differential opeartors. Actually it is exactly the same function if we take $\rho = \det^{\mu}$ where $\mu = \{0,1\}$, since $\det$ is in $\tau(\Sigma)$. In particular Propositions A. 3.17 and A. 3.19 in \cite{Sh00} hold also for the theta function defined here when one replaces $\det(\xi)^\mu$ there with $P(\sqrt{\tau}\xi)$. Indeed the level of the theta series depends, thanks to the theorem above, only on the choice of the Schwartz function $\lambda$. We now describe a particular choice of $\lambda$ and give the congruence subgroup of the corresponding theta series. 

We start with a Hecke character $\chi$ of conductor $\mathfrak{f}$.  For a fixed $Q\in GL_n(\mathbb{Q}_{\mathbf{h}})$, we define the theta seres $\theta_{\rho,\chi}(z):= \theta(  z, \lambda;P)$ where the Schwartz function $\lambda$ is given at each place by $\lambda_p(x) = \chi_p^{-1}(|Q|)\lambda_p'(Q^{-1}x)$,
\[
	\lambda_p'(x) := \begin{cases} 1 & \text{if $x\in M_n(\mathbb{Z}_p)$ and $p\nmid\mathfrak{f}$} \\ \chi_p^{-1}(|x|) &\text{if $x\in GL_n(\mathbb{Z}_p)$ and $p\mid\mathfrak{f}$} \\ 0 &\text{otherwise}, \end{cases}
\]
and, overall, by $\lambda(x) := \prod_p\lambda_p(x_p)\in\mathcal{S}(M_n(\mathbb{Q}_{\mathbf{h}}))$. Then as in the scalar weight case (see Proposition A3.19 in \cite{Sh00}) we have that $\theta_{\rho,\chi}(z) \in \mathcal{M}_{\rho_{n/2}}(\Gamma,\chi\epsilon_{\tau})$ where $\epsilon_{\tau}$ is the quadratic character, of conductor $\f{h}$, corresponding to the extension $F(i^{[n/4]}\sqrt{|2\tau|})/F$; $\Gamma = G \cap D[\mathfrak{b}^{-1}, \mathfrak{bc}]$ for a fractional ideal $\f{b}$ and integral ideal $\f{c}$ given by
\[
(\mathfrak{b},\mathfrak{c}) = \begin{cases} (2^{-1} \mathfrak{r} ,\mathfrak{h} \cap \mathfrak{f} \cap \mathfrak{r}^{-1} \mathfrak{f}^2 \mathfrak{t}) &\text{if $n \in 2 \mathbb{Z}$} \\ 
                                                                      (2^{-1} \mathfrak{a}^{-1} ,\mathfrak{h} \cap \mathfrak{f} \cap 4 \mathfrak{a} \cap \mathfrak{a} \mathfrak{f}^2 \mathfrak{t}) &\text{otherwise},   \end{cases}
\]
in which the ideals $\mathfrak{r}$, $\mathfrak{t}$, and $\f{a}$ are defined, for all $g \in Q \mathbb{Z}^n$ and for all $h \in \transpose{Q}^{-1} \mathbb{Z}^n$, by 
\begin{align*}
	\transpose{g} 2 \tau g &\in \mathfrak{r}, \\
	 \transpose{h}(2 \tau)^{-1} h &\in 4 \mathfrak{t}^{-1}, \\
	\mathfrak{a} :&= \mathfrak{r}^{-1} \cap \mathbb{Z}.
\end{align*}

\subsection{Cuspidal theta series.} Our next aim in this section is to obtain a result towards the cuspidality of  this theta series, which will be useful later in establishing non-vanishing results for the $L$-function of a cusp form.  We first note that Theorem A3.3 (5) and (6) of \cite{Sh00} tell us that if $\sigma\in\f{M}_n$ is such that $\pr(\sigma) = \begin{pmatrix} 1 & b_{\sigma} \\ 0 & 1\end{pmatrix}$, $x\in M_n(\mathbb{Q})$, and $\tau\in S_+$ is fixed, then
\begin{align}	
	(^{\sigma}\lambda)(x) &= e_{\mathbf{h}}\left(\tr(\transpose{x}\tau x \transpose{b}_{\sigma})\right)\lambda(x) \\
	(^{\eta}\lambda)(x) &= \int_{M_n(\mathbb{Q}_{\mathbf{h}})} \lambda(y) e_{\mathbf{h}}(-\tr(\transpose{x}2\tau y))d_{\mathbf{h}}y,
\end{align}
where recall $\eta := \begin{pmatrix} 0 & -1_n \\ 1_n & 0\end{pmatrix}$ and $d_{\mathbf{h}}y = \prod_p d_py$ is the Haar measure such that $\int_{M_n(\mathbb{Z}_p)}d_py = |\det(2\tau)|_p^{\frac{n}{2}}$. 

If $\chi$ is a character modulo $\f{f} = F\mathbb{Z}$, $X\in M_n(\mathbb{Z})$, and $R\in S$ is a symmetric matrix, then define the generalised quadratic Gauss sum by
\[
	G_n'(\chi, X, R, F) := \sum_{T\in M_n(\mathbb{Z}/F\mathbb{Z})}\chi(|T|)e^{2\pi i\frac{\tr(\transpose{X}T-\tau[Q]TR\transpose{T})}{F}},
\]
where $\tau[Q] := \transpose{Q}\tau Q$. The integral $(^{\sigma\eta}\lambda)(x)$ is calculated as follows.

\begin{lem}\label{integralgauss} Let $\chi$ be a character modulo $\f{f} = F\mathbb{Z}$ and put $F_p :=\ord_p(\f{f})$. Assume that $\sigma$ and $\tau$ are as above, that $b = b_{\sigma}\in S(\mathbb{Z}_p)$, and let $Q\in GL_n(\mathbb{Q}_{\mathbf{h}})$. Then the value of $(^{\eta\sigma}\lambda)(x)$ is non-zero if and only if 
\[
	p^{F_p}\tau[Q] - 2\transpose{x}\tau Q\in \begin{cases}p^{-F_p}M_n(\mathbb{Z}_p/p^{F_p}\mathbb{Z}_p) &\text{if $p\mid\f{f}$} \\ M_n(\mathbb{Z}_p) &\text{if $p\nmid\f{f}$} \end{cases}
\]
at which it is given by
\[
	(^{\eta\sigma}\lambda)(x) = \chi(|Q|)|2QF\tau|^{-\frac{n}{2}}G_n'(\chi, 2F \transpose{Q}\tau x, Fb, F).
\]
\end{lem}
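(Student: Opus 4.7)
The plan is to directly evaluate the integral for $(^{\eta\sigma}\lambda)(x)$ obtained from the formulas (4) and (5), localize at each finite prime, change variables via $y = Qu$, and reduce the local integral to a finite Gauss sum by a decomposition modulo $F$.

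Composing (4) and (5) yields
\[
(^{\eta\sigma}\lambda)(x) = \int_{M_n(\mathbb{Q}_\mathbf{h})} e_\mathbf{h}\bigl(\tr(\transpose{y}\tau y\transpose{b})\bigr)\, \lambda(y)\, e_\mathbf{h}\bigl(-\tr(2\transpose{x}\tau y)\bigr)\, d_\mathbf{h} y,
\]
which factors over finite places. At a fixed prime $p$, substitute $\lambda_p(y) = \chi_p^{-1}(|Q|)\lambda_p'(Q^{-1}y)$ and perform the change of variable $y = Qu$ so that $\transpose{y}\tau y$ becomes $\transpose{u}\tau[Q]u$ and $\transpose{x}\tau y$ becomes $\transpose{x}\tau Qu$. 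This produces a Jacobian factor $|\det Q|_p^n$ together with the measure normalization $|\det(2\tau)|_p^{n/2}$, while the support of $\lambda_p'$ restricts $u$ to $M_n(\mathbb{Z}_p)$ when $p \nmid \f{f}$, or to $GL_n(\mathbb{Z}_p)$ with weight $\chi_p^{-1}(|u|)$ when $p \mid \f{f}$.

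Now write $u = T + p^{F_p}\tilde{t}$ with $T$ running over $M_n(\mathbb{Z}_p/p^{F_p}\mathbb{Z}_p)$ and $\tilde{t}\in M_n(\mathbb{Z}_p)$; the conductor hypothesis on $\chi$ guarantees $\chi_p^{-1}(|T + p^{F_p}\tilde{t}|) = \chi_p^{-1}(|T|)$. Expanding the quadratic and linear exponents and using the symmetry of $b$, $\tau$, $\tau[Q]$ together with cyclicity of the trace, the phase separates into a $T$-dependent factor
\[
e_p\bigl(\tr(\transpose{T}\tau[Q]T b) - \tr(2\transpose{x}\tau QT)\bigr)
\]
and a residual $\tilde{t}$-dependent character of the form $e_p\bigl(p^{F_p}(\text{linear in }\tilde{t}) + p^{2F_p}(\text{quadratic in }\tilde{t})\bigr)$. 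The integral of this residual character over $\tilde{t}\in M_n(\mathbb{Z}_p)$ is character orthogonality on the compact additive group $M_n(\mathbb{Z}_p)$: it vanishes unless the character is trivial, a condition which after simplification reduces precisely to the stated integrality constraint on $p^{F_p}\tau[Q] - 2\transpose{x}\tau Q$ (with the weaker variant in the unramified case $p\nmid\f{f}$).

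When the condition holds, the $\tilde{t}$-integral contributes a volume factor, and the remaining finite sum over $T$, weighted by $\chi_p^{-1}(|T|)$ and the $T$-phase above, reproduces the local factor of $G_n'(\chi, 2F\transpose{Q}\tau x, Fb, F)$ after rewriting the exponent with the $F^{-1}$ appearing in the definition of $G_n'$. Taking the product over all primes and collecting the prefactors $\chi_p^{-1}(|Q|)$, $|\det Q|_p^n$, $|\det(2\tau)|_p^{n/2}$, and the $\tilde{t}$-volumes gives $\chi(|Q|)\,|2QF\tau|^{-n/2}\,G_n'(\chi, 2F\transpose{Q}\tau x, Fb, F)$ (possibly after substituting $T\mapsto T^{-1}$ in the sum when $p\mid\f{f}$ to reconcile $\chi_p$ with $\chi_p^{-1}$). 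The main technical difficulty is this bookkeeping: carefully verifying that the triviality condition extracted from the $\tilde{t}$-integral coincides with the lemma's integrality condition in both the cases $p\mid\f{f}$ and $p\nmid\f{f}$, and checking that the accumulated normalization factors combine correctly into $|2QF\tau|^{-n/2}$.
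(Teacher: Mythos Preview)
Your proposal is correct and follows essentially the same route as the paper: compose (4) and (5), change variables $y=Qu$ locally, split $u$ modulo $p^{F_p}$, use character orthogonality on the residual integral to extract the integrality condition, and identify the remaining finite sum as the local factor of $G_n'$. The only cosmetic difference is that the paper treats the ramified and unramified primes in two separate paragraphs rather than uniformly, but the computations are identical in substance.
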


\begin{proof} First consider the local integrals for $p\mid\f{f}$, at which we have that $(^{\eta\sigma}\lambda)_p(x)$ is equal to
\begin{align*}
	\chi_p^{-1}(|Q|)&\int_{QGL_n(\mathbb{Z}_p)} \chi_p^{-1}(|Q^{-1}y|)e_p\left(\tr(\tau yb\transpose{y} - \transpose{x}2\tau y)\right)d_py \\
	&= \chi_p^{-1}(|Q|)|\det(Q)|_p^{\frac{n}{2}}\int_{GL_n(\mathbb{Z}_p)}\chi_p^{-1}(|y|)e_p\left(\tr(\tau[Q]yb\transpose{y}-\transpose{x}2\tau Qy)\right)d_py.
\end{align*}

Since the local conductor of $\chi_p$ is $p^{F_p}$ this becomes
\begin{align*}
	\chi_p^{-1}(|Q|)|\det(Q)|_p^{\frac{n}{2}}\sum_{a\in M_n(\mathbb{Z}/p^{F_p}\mathbb{Z})}&\chi_p^{-1}(|a|)e\left(\tr(\transpose{x}2\tau Qa - \tau[Q]ab\transpose{a})\right) \\
	&\times\int_{M_n(\mathbb{Z}_p)}e_p\left(\tr(p^{2F_p}\tau[Q]yb\transpose{y} - p^{F_p}\transpose{x}2\tau Qy)\right)d_py.
\end{align*}
The integral on the second line is non-zero if and only if the integrand is a constant function in $y$ -- i.e. if and only if $p^{F_p}\tau[Q]- 2\transpose{x}\tau Q\in p^{-F_p}M_n(\mathbb{Z}_p)$ -- at which point it is equal to $p^{-F_p(n^2/2)}$. Multiplying all such local sums together for $p\mid\f{f}$ gives the form in the lemma. Note that if $p^{F_p}\tau[Q] - 2\transpose{x}\tau Q\in M_n(\mathbb{Z}_p)$ then the above expression becomes a sum of a character over all its values, which is zero. 

When $p\nmid\f{f}$ then the local integral $(^{\eta\sigma}\lambda)_p(x)$ is equal to
\[
	\int_{QM_n(\mathbb{Z}_p)} e_p\left(\tr(\tau yb\transpose{y} - \transpose{x}2\tau y)\right)d_py
\]
and this is non-zero if and only if we have the condition given in the lemma at which point, by the definition of the Haar measure, it is $|\det(2\tau Q)|_p^{\frac{n}{2}}$.
\end{proof}

\begin{prop}\label{nquadgauss} If $\det(X)=0$; $p$ is an odd prime; $\tau = \diag[\tau_1, \dots, \tau_n]$ is diagonal, $Q\in GL_n(\mathbb{Q}_{\mathbf{f}})$ is upper triangular with coefficients $q_{n1}, \dots, q_{n, n-1} = 0$; and $\chi$ is odd of conductor $p$, then
\[
	G_n'(\chi, X, R, p) = 0.
\]
\end{prop}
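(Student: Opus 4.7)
My plan is to prove the vanishing by normalizing the Gauss sum via a change of variables tailored to the upper-triangular structure of $Q$, and then applying a row-sign-flip involution that exploits the oddness of $\chi$ together with the diagonality of $\tau$.

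First, since $Q$ is upper triangular and lies in $GL_n(\mathbb{Q}_{\mathbf{f}})$, its diagonal entries are $p$-adic units, so the substitution $T = Q^{-1}S$ is a bijection of $M_n(\mathbb{Z}/p\mathbb{Z})$. The quadratic factor simplifies by cyclicity of the trace,
\[
\tr(\tau[Q]\,TRT^T) \;=\; \tr(Q^T\tau Q\cdot Q^{-1}SRS^TQ^{-T}) \;=\; \tr(\tau\,SRS^T),
\]
while $\tr(\transpose{X}T) = \tr(\transpose{Y}S)$ with $Y := Q^{-T}X$ still singular, and $\chi(|T|) = \chi(|Q|)^{-1}\chi(|S|)$. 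Setting $\psi(x) := e^{2\pi ix/p}$, it suffices to prove
\[
\sum_{S\in M_n(\mathbb{F}_p)}\chi(|S|)\,\psi\!\Bigl(\sum_{i=1}^{n}\bigl[Y_iS_i^T - \tau_iS_iRS_i^T\bigr]\Bigr) \;=\; 0,
\]
where $Y_i$ and $S_i$ denote the $i$-th rows. Crucially, thanks to the diagonality of $\tau$, the exponent decomposes row-by-row, so the rows of $S$ interact only through the determinant $|S|$ inside $\chi$.

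Second, $\det Y = 0$ furnishes a relation $\sum_i u_iY_i = 0$ with $u \neq 0$; after a row permutation (absorbed into a factor $\chi(\pm 1)$) we may take $u_n \neq 0$. I would then perform a further change of variables on $S$ of the form $S \to VS$ that implements the row operation transforming $Y$ into a matrix whose last row vanishes. Because $R$ is arbitrary symmetric and enters the sum with no structural constraint, we can absorb any resulting transformation $R \to VRV^T$ without cost; the only invariant we must preserve is the diagonal quadratic form $\tau$, and this can be handled by exploiting the freedom afforded by simultaneous left and right multiplications of $S$, keeping $\chi(|V|)$ under control. This step reduces matters to the case $Y_n = 0$.

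Third, assuming $Y_n = 0$, the involution $S \to \diag(1,\ldots,1,-1)\cdot S$ (negation of the last row of $S$) multiplies $|S|$ by $-1$, hence multiplies $\chi(|S|)$ by $\chi(-1) = -1$ by oddness of $\chi$; it preserves every quadratic term $\tau_kS_kRS_k^T$ since quadratic forms are even; it preserves every linear term $Y_kS_k^T$ for $k<n$; and the $k=n$ term $Y_nS_n^T$ is identically zero. The sum therefore equals its own negative, forcing $G_n'(\chi,X,R,p) = 0$.

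The main obstacle is the second step: producing the $GL_n(\mathbb{F}_p)$-substitution that kills the last row of $Y$ while retaining the row-by-row structure of the exponent used in the sign-flip argument. The key point will be that diagonality of $\tau$ coupled with the oddness of $\chi$ (so $\chi(-1)=-1$) and of $p$ (so squares and non-squares are available for rescaling null vectors) provides enough flexibility to realize the needed reduction, possibly at the cost of modifying $R$ and picking up a character factor that evaluates trivially or can be cancelled on both sides of the identity $G = -G$.
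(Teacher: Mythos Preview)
Your step 2 contains a genuine error that breaks the argument. You claim that the substitution $S \to VS$ (left multiplication) transforms $R \to VRV^{T}$, but this is false: under $S \to VS$ the quadratic term becomes
\[
\tr(\tau\, VS\,R\,S^{T}V^{T}) \;=\; \tr\bigl((V^{T}\tau V)\,S\,R\,S^{T}\bigr),
\]
so it is $\tau$ that is replaced by $V^{T}\tau V$, which is no longer diagonal. The transformation that sends $R \to VRV^{T}$ is right multiplication $S \to SV$, but that induces a \emph{column} operation $Y \to YV^{T}$ on $Y$, which cannot be used to kill a \emph{row} of $Y$. Thus the two desiderata---annihilating $Y_n$ and keeping the row-by-row quadratic structure---pull in opposite directions. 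One can try to repair this by choosing $V$ so that both $Y^{T}v = 0$ (with $v$ the last column of $V$) and $V^{T}\tau V$ has vanishing off-diagonal last row; working this out, one needs the remaining columns of $V$ to span the $\tau$-orthogonal complement of $v$, which forces $v^{T}\tau v \neq 0$. When the left null space of $Y$ consists entirely of $\tau$-isotropic vectors this fails, and your parenthetical remark about ``squares and non-squares being available for rescaling null vectors'' does not address this obstruction.

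There is also a smaller issue in step 1: $Q \in GL_n(\mathbb{Q}_{\mathbf{f}})$ only gives diagonal entries in $\mathbb{Q}_p^{\times}$, not in $\mathbb{Z}_p^{\times}$, so $T \mapsto Q^{-1}S$ need not be a bijection of $M_n(\mathbb{Z}/p\mathbb{Z})$. The paper avoids both problems by never substituting away $Q$; instead it normalises $X$ via \emph{column} operations (which alter only the free parameter $R$ and leave $\tau[Q]$ intact), then uses the upper-triangular shape of $Q$ to isolate the dependence on the last column $\mathbf{t}_n$ of $T$, and finishes with an inductive pairing argument over the sign-orbits of the entries of $\mathbf{t}_n$. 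The sign-flip idea you propose in step 3 is morally the same involution the paper exploits, but the paper's column-based setup is what makes it go through without the isotropy obstruction.
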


\begin{proof} In the base $n=1$ case, $0 = X\in\mathbb{Z}$ and we can write
\[
	G_1'(\chi, X, R, p) = \sum_{n\in\mathbb{F}_p^{\times}} \chi(n)e^{-2\pi i\tau Q^2\frac{Sn^2}{p}} = \sum_{n=1}^{\frac{p-1}{2}}[\chi(n)+\chi(-n)]e^{-2\pi i\tau Q^2\frac{Rn^2}{p}}
\]
for $Q\in\mathbb{Q}$; $\tau, R\in\mathbb{Z}$; and this is zero if $\chi$ is odd.

For the general $n$ case, first let $M_{jn}$ be the $(n-1)\times (n-1)$ matrix obtained from any $n\times n$ matrix $M$ by removing the $j$th row and the $n$th column. By a change of basis followed by a change of variables in $T$ we can assume that
\[
	X = \begin{pmatrix} X_{nn} & \mathbf{0} \\ \mathbf{x} & 0\end{pmatrix},
\]
where $\begin{pmatrix}\mathbf{x} & 0 \end{pmatrix}\in\mathbb{Z}^n$ is the $n$th row of $X$. Let $\mathbf{t}_i$ be the $i$th column of $T$, and let $\mathbf{t}_i^Q$ be the $i$th column of $T\transpose{Q}$. Then
\begin{align*}
	\tr(\transpose{X}T) &= \tr(\transpose{X}_{nn}T_{nn}) + \sum_{i=1}^{n-1}x_{ni}t_{ni} \\
 	\tr(\tau[Q]\transpose{T}RT) &= \tr (\tau \transpose{(T\transpose{Q})}R T\transpose{Q}) = \sum_{i=1}^n \tau_i\transpose{\mathbf{t}}_i^QR\mathbf{t}_i^Q
\end{align*}
and so within the sum defining $G_n'(\chi, X, R, p)$ appears the following subsum
\begin{align}
	\sum_{\mathbf{t}_n\in\mathbb{F}_p^n} \chi\left(|T|\right) e\left(-\tfrac{\tau_nq_n^2\transpose{\mathbf{t}}_nR\mathbf{t}_n}{p}\right).
\end{align}
We have been able to separate the variables as such by the specific form of $Q$ and by using that $\mathbf{t}_n^Q = q_n\mathbf{t}_n$ since $Q$ is upper triangular. The proof is completed by showing that the above sum (6) is zero if $\chi$ is odd. By Lemma A1.5 of \cite{Sh00}, there exists $\alpha\in GL_n(\mathbb{Z})$ such that $\transpose{\alpha}^{-1}R\alpha^{-1} = R' := \diag[r_1, \dots, r_n]$ is diagonal. Using the expansion
\[
	|T| = \sum_{j=1}^n(-1)^{n+j}t_{jn}|T_{jn}|
\]
the sum of (6) can be written as
\[
	\chi(|\alpha|^{-1})\sum_{\mathbf{t}_n\in\mathbb{F}_p^n} \chi\left(|\alpha T|\right) e\left(-\tfrac{\tau_nq_n^2\transpose{(\alpha\mathbf{t}_n)}R'(\alpha\mathbf{t}_n)}{p}\right)
\]
and this right-hand sum, after a change of variables, becomes
\begin{align}
	\sum_{(t_{1n}, \dots, t_{nn})\in\mathbb{F}_p^n} \chi\left(\sum_{j=1}^n(-1)^{n+j}t_{jn}|(\alpha T)_{jn}|\right) e\left(-p^{-1}\tau_nq_n^2\sum_{j=1}^n r_jt_{jn}^2\right) .
\end{align}
In the base $n=1$ case (7) becomes $G_1'(\chi, X, R, p)$ which we have shown to be zero at the beginning of this proof. So now assume that the $n-1$ degree sum corresponding to (7) is zero. If one of the $t_{jn} = 0$ in (7), then it becomes the $n-1$ degree sum and is therefore zero. So we can assume by induction that $(t_{1n}, \dots, t_{nn})\in (\mathbb{F}_p^{\times})^n$, which set can be partitioned as
\[
	(\mathbb{F}_p^{\times})^n = \bigsqcup_{i_1, \dots, i_n = 0}^1 (-1)^{i_j}(\mathbb{F}_p^-)^n
\]
for $\mathbb{F}_p^- := \{1, \dots, \frac{p-1}{2}\}$. This can easily be seen by writing any $(a_1, \dots, a_n)\in(\mathbb{F}_p^{\times})^n$ as $((-1)^{i_1}a_1', \dots, (-1)^{i_n}a_n')$, where $a_j' = |a_j''|$ and $a_j''$ is the representative of $a_j$ taken in $\{\pm 1, \dots, \pm\frac{p-1}{2}\}$. The aim is to re-write the sum of (7) over $(\mathbb{F}_p^-)^n$. To this end notice that as $(t_{1n}, \dots, t_{nn})\mapsto ((-1)^{i_1}t_{1n}', \dots, (-1)^{i_n}t_{nn}')$ we have
\begin{align*}
	(7)&\mapsto \sum_{(t_{1n}', \dots, t_{nn}')\in(\mathbb{F}_p^-)^n}\sum_{\mathbf{i}\in\mathbb{F}_2^n}\chi(|T|_{\mathbf{i}})e\left(-p^{-1}\tau_nq_n^2\sum_{j=1}^n r_j(t_{jn}')^2\right) \\
	|T|_{\mathbf{i}} :&= \sum_{j=1}^n(-1)^{n+j+i_j}t_{jn}'|(\alpha T)_{jn}|
\end{align*}
where $\mathbf{i} = (i_1, \dots, i_n)$. The argument of the exponential is unchanged by the transformation $((-1)^{i_1}t_{1n}', \dots, (-1)^{i_n}t_{nn}')\mapsto - ((-1)^{i_1}t_{1n}', \dots, (-1)^{i_n}t_{nn}')$, yet in the coefficients we see $|T|_{\mathbf{i}}\mapsto -|T|_{\mathbf{i}}$. Hence we can pair up the coefficients of the exponential as follows. Let $\sim$ be an equivalence relation on $\mathbb{F}_2^n$ defined by $\mathbf{i}_1\sim \mathbf{i}_2$ if and only if $\mathbf{i}_1 = \mathbf{i}_2+\mathbf{1}$. Then (7) becomes
\[
	\sum_{(t_{n1}', \dots, t_{nn}')\in(\mathbb{F}_p^-)^n}\sum_{\mathbf{i}\in\mathbb{F}_2^n/\sim}\left[\chi(|T|_{\mathbf{i}})+\chi(-|T|_{\mathbf{i}})\right]e\left(-p^{-1}\tau_nq_n^2\sum_{j=1}^nr_j(t_{jn}')^2\right)
\]
which is zero, since $\chi$ is odd.	
\end{proof}

\begin{thm}\label{theta} Let $\chi$ be an odd non-trivial Dirichlet character of square free conductor prime to $2$. Then there are choices of $\tau \in S_+$ and $Q \in GL_n(\mathbb{Q}_{\mathbf{h}})$ such that the corresponding $\theta_{\rho,\chi}(z)$ is a cusp form.
\end{thm}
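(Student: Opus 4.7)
The plan is to choose $\tau = \diag[\tau_1,\ldots,\tau_n] \in S^+$ diagonal and $Q \in GL_n(\mathbb{Q}_\h)$ upper triangular, so that Proposition \ref{nquadgauss} applies at every prime dividing the squarefree conductor $\f{f}$. With these choices one then verifies that the Fourier expansion of $\theta_{\rho,\chi}$ is supported on $S^+$ at every cusp.

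Combining the theta transformation law of Section 3.2 with Proposition \ref{bijection}, the Fourier expansion of $\theta_{\rho,\chi}$ at the cusp represented by $\alpha \in Sp_n(\mathbb{Q})\cap\f{M}_n$ takes the form
\[
\sum_{\xi\in W} ({}^\alpha\lambda)(\xi_\h)\,P(\sqrt{\tau}\xi)\,e\bigl(2^{-1}\tr(\transpose{\xi}\tau\xi z)\bigr).
\]
Since $\tau > 0$, the index $\transpose{\xi}\tau\xi$ lies in $S^+$ iff $\xi$ is invertible, so cuspidality reduces to the vanishing $({}^\alpha\lambda)(\xi_\h) = 0$ for every $\alpha$ in a set of cusp representatives and every $\xi \in W$ with $\det(\xi) = 0$.

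By the Bruhat decomposition, every such $\alpha$ admits a factorisation $p_1\,\eta\,\sigma\,p_2$ with $p_1,p_2$ parabolic and $\sigma \in \pr^{-1}(P)\cap\f{M}_n$. Formula $(4)$ shows that translation by the parabolic parts multiplies $\lambda$ only by additive characters and invertible linear substitutions, neither of which can create or destroy zeros on singular matrices; after modifying $\sigma$ by an element of $\Gamma$ we may further take $b_\sigma \in S(\mathbb{Z})$. Lemma \ref{integralgauss} then gives, with $\f{f} = F\mathbb{Z}$,
\[
({}^{\eta\sigma}\lambda)(\xi) = \chi(|Q|)\,|2QF\tau|^{-n/2}\,G_n'\bigl(\chi,\,2F\transpose{Q}\tau\xi,\,Fb_\sigma,\,F\bigr).
\]
If $\det(\xi) = 0$ then $X := 2F\transpose{Q}\tau\xi$ is also singular because $\tau$ and $Q$ are invertible. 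The Chinese remainder theorem factors the Gauss sum over the distinct primes of the squarefree integer $F$ as
\[
G_n'(\chi,X,Fb_\sigma,F) \;=\; \prod_{p\mid F} G_n'\bigl(\chi_p,\,X_p,\,p(b_\sigma)_p,\,p\bigr).
\]
Since $\chi(-1) = -1$, the identity $\prod_{p\mid F}\chi_p(-1) = -1$ forces at least one local component $\chi_p$ to be odd. At that odd prime $p$, our choice of $\tau$ diagonal and $Q$ upper triangular realizes exactly the setup of Proposition \ref{nquadgauss}, so $G_n'(\chi_p,X_p,\cdot,p) = 0$, whence the whole Gauss sum vanishes and $({}^{\eta\sigma}\lambda)(\xi) = 0$.

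The main obstacle I foresee is justifying the Bruhat reduction cleanly, since for $n\geq 2$ there are intermediate Weyl elements that produce only partial Fourier transforms of $\lambda$, to which Proposition \ref{nquadgauss} does not directly apply. The cleanest route is through the iterated Siegel $\Phi$-operator: cuspidality is equivalent to $\Phi(\theta_{\rho,\chi}|\gamma) = 0$ at every cusp $\gamma$, and a single application of $\Phi$ collapses the rank-$n$ theta sum to a rank-$(n-1)$ sum of the same structural shape with $\tau$ and $Q$ restricted to the upper $(n-1)$-block. This permits an induction on $n$ in which only the full-Gauss-sum case of Proposition \ref{nquadgauss}, in the appropriate dimension, intervenes at each stage.
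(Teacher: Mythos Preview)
Your overall strategy---reduce cuspidality to showing the transformed Schwartz function vanishes on singular matrices, then invoke Proposition \ref{nquadgauss}---matches the paper's. But the Bruhat step contains a genuine gap, which you correctly flag at the end without resolving.

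The factorisation $\alpha = p_1\eta\sigma p_2$ with $p_1,p_2,\sigma$ all in the Siegel parabolic $P$ is simply false: the Bruhat decomposition $G = \bigsqcup_{j=0}^n P w_j P$ has $n+1$ cells, and only the big cell $P\eta P$ has this form. The intermediate $w_j$ give partial Fourier transforms of $\lambda$ to which neither Lemma \ref{integralgauss} nor Proposition \ref{nquadgauss} applies.

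The paper sidesteps this entirely by working not with $\Gamma\backslash Sp_n(\mathbb{Z})/P$ but with $\Gamma\backslash Sp_n(\mathbb{Z})/P_{n-1}$, where $P_{n-1}$ is the \emph{Klingen} parabolic (the stabiliser of a degree-$(n-1)$ boundary component). Cuspidality is equivalent to $\Phi(\theta|\alpha)=0$ for $\alpha$ running over this double coset, and the relevant Bruhat decomposition with $P_{n-1}$ on the right,
\[
Sp_n(\mathbb{F}_p)=P_0(\mathbb{F}_p)\,P_{n-1}(\mathbb{F}_p)\;\cup\;P_0(\mathbb{F}_p)\,\eta\,P_{n-1}(\mathbb{F}_p),
\]
has only \emph{two} cells. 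The representatives are then just $\{m(s),\,m(s)\eta : s\in S(\mathbb{F}_p)\}$, and the obstacle you identify never arises. For this reduction to work the paper must also control the level: it takes $\tau=2pI_n$ and $Q=(2p)^{-1}I_n$, forcing the theta series to have level $\Gamma[2p,2p]$, and then finds the cusps explicitly by reducing mod $2$ and mod $p$ via the Chinese remainder theorem. Your proposal leaves $\tau$ and $Q$ unspecified beyond their shape, so you have no concrete list of cusps to check.

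Your proposed induction via $\Phi$ is a plausible alternative, but as stated it does not close the gap: by induction the restricted degree-$(n-1)$ theta series would be a \emph{cusp form}, not identically \emph{zero}, and it is the latter that you need for $\Phi(\theta|\alpha)=0$. Making that route precise essentially forces you back to the Klingen-parabolic picture anyway.
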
  
\begin{proof} We write $\theta(z,\lambda; P)$ for $\theta_{\rho,\chi}(z)$, where $\lambda$ is the corresponding to $\chi$ Schwartz function and $p$ the pluriharmonic polynomial. We first note that for any $\alpha \in Sp_n(\mathbb{Q})$ we have that $\theta(z,\lambda; P)|_{\rho_{\frac{n}{2}}} \alpha = \theta(z, \actioninv{\lambda}; P)$. If we write $\Gamma$ for the congruence subgroup of this theta series, then in order to prove that it is a cusp form, it is enough to show that (see for example \cite[Lemma 27.3]{Sh00})
\[
\Phi\left( \theta |_{\rho_{\frac{n}{2}}} \alpha \right) = 0
\]
where $\alpha$ runs over a set of representatives of $\Gamma \setminus Sp_n(\mathbb{Z}) / P_{n-1}(\mathbb{Z})$ and $P_{n-1}$ denotes the Klingen parabolic corresponding to boundary components of degree $n-1$. For a definition we refer, for example, to \cite[page 595]{Sh95} where it is denoted as $P^{n,n-1}$. Furthermore $\Phi$ denotes Siegel's $\Phi$-operator, a definition of which can be found in \cite[page 219]{Sh00}.

Our aim is to find, explicitly, some representatives for the above double coset. We do this for a congruence subgroup $\Gamma$ of a particular type, namely $\Gamma[m,m]$ where $m$ is a square free integer, i.e. $m = \prod_{i} p_i$ where $p_j \neq p_k$ for $k\neq j$ and $p_i$ primes. Here we denote,
\[
\Gamma[m,m] = \left\{ \gamma = \left(\begin{matrix}  a & b \\ c & d \end{matrix}\right) \in Sp_n(\mathbb{Z}) \bigg| b \equiv 0 \pmod{m},\,\,\,\,c \equiv 0 \pmod{m} \right\}
\]
Our approach is inspired by a similar one done in \cite[page 76]{BP}, where the case of groups $\Gamma_0(m)$ for square free $m$ was considered.
We first consider the case where $m = p$ for some prime $p$. By taking the projection $Sp_n(\mathbb{Z}) \rightarrow Sp_n(\mathbb{F}_p)$ modulo $p$, which is surjective, and since the kernel belongs to $\Gamma(p,p)$ we see that is enough to find representatives for the set $ C:= Q(\mathbb{F}_p) \setminus Sp_n(\mathbb{F}_p) / P_{n-1}(\mathbb{F}_p)$, where $Q(\mathbb{F}_p)$ is the set of matrices $\diag[a, \transpose{a}^{-1}]$ with $a \in GL_n(\mathbb{F}_p)$.  If we write $P_0(\mathbb{F}_p)$ for the points of the Siegel parabolic over the finite field $\mathbb{F}_p$ then we have the Bruhat decomposition
\[
Sp_n(\mathbb{F}_p) = P_0(\mathbb{F}_p) P_{n-1}(\mathbb{F}_p) \cup P_0(\mathbb{F}_p) \eta P_{n-1}(\mathbb{F}_p).
\]
 But then if we use the fact that $P_0 = Q R$ where $R(\mathbb{F}_p) = \left\{ m(s) := \left( \begin{matrix} 1 & s \\ 0 & 1  \end{matrix}\right) \bigg| \transpose{s} = s \right\}$ we can conclude that a set of representatives for the set $C$ can be given by a subset of the matrices $\{ m(s), m(s) \eta : s \in S(\mathbb{F}_p) \}$. By lifting back this set to $Sp_n(\mathbb{Z})$ we obtain a set of representatives for the $n-1$-degree cusps, in the case of $m$ being a prime. 
 
 For the general case, where $m$ is a product of distinct primes we can use the Chinese reminder theorem to show that $Sp_n(\mathbb{Z}/ m \mathbb{Z}) = Sp_n(\mathbb{F}_{p_1}) \times \ldots \times Sp_n(\mathbb{F}_{p_m})$ to reduce everything to the case of a single prime. 
 
 We now explain how we can construct theta series whose congruence subgroup is of the form $\Gamma[2p,2p]$ for an odd prime $p$. We let $\chi$ be a character of conductor $p$. Following the notation above we choose our $\tau$ and $Q$ such that $(\mathfrak{b}, \mathfrak{c}) = ((2p)^{-1}, 4p^2)$. This can be done for example by selecting $\tau = 2pI_n$, and $Q = (2p)^{-1} I_n$. 
 
 With these choices, we now show that the corresponding theta series is cuspidal. Since for any $\alpha \in Sp_n(\mathbb{Q})$ we have that $\theta(z,\lambda; P)|_{\rho_{\frac{n}{2}}} \alpha = \theta(z, \actioninv{\lambda}; P)$, it is enough to show that the support of the Schwartz function $\actioninv{\lambda}$ is on full rank matrices for all the representatives $\alpha$ of the double coset, which we have listed. This can be achieved by using the reciprocity laws, see (4) and (5) above, of the action of the representatives of the cusps above to the Schwarz function of the theta series. We distinguish the cusps according how are represented locally at places $(2, p)$ as follows  
 \begin{align}
 	(m(s_1),m(s_2)), \\
	(m(s_1), m(s_2)\eta), \\
	(m(s_1)\eta,m(s_2)), \\
	(m(s_1)\eta,m(s_2) \eta).
 \end{align}
As is done in \cite{BP} it's enough to check the Schwarz functions locally. For the first kind (8), we have by Theorem A3.3 (5) of \cite{Sh00} that the support of the Schwarz function $\actioninv{\lambda}$ at the corresponding cusp is at the non-singular matrices. For the cusp of (10), this is also clear since the support of $\chi$ is away from 2, and so $^{\eta m(-s_1)}\lambda_2$ is just Theorem A3.3 (5). For the kinds (9) and (11), the Schwartz function $^{\eta m(-s_2)}\lambda_p$ is zero on singular matrices by the lemma and proposition preceding this theorem.
\end{proof}

\section{Rankin-Selberg Integral Representation}

The main aim of this section is to extend some well-known results of the Rankin-Selberg integral expression from the scalar weight case (as for example in \cite{Sh00}) to the vector valued case. From the rest of the paper we will assume that the representation $\rho$ is in $\tau(\Sigma)$. 

\subsection{The function $D(s,f,g)$}

For $0<n\in \mathbb{Z}$ and $k, \ell\in\frac{1}{2}\mathbb{Z}$ we let $f\in S_{\rho_k}(\Gamma_1, \psi_1)$ and $g\in M_{\rho_{\ell}}(\Gamma_2, \psi_2)$ where $\Gamma_1 = \Gamma[\f{b}_1^{-1}, \f{b}_1\f{c}_1], \Gamma_2 = \Gamma[\f{b}_2^{-1}, \f{b}_2\f{c}_2]$ are two congruence subgroups ($\Gamma_1$ or $\Gamma_2$ contained in $\f{M}$ if $k\notin\mathbb{Z}$ or $\ell\notin\mathbb{Z}$ respectively), and $\psi_1, \psi_2$ are two nebentypes such that
\[
 (\psi_1 \bar{\psi}_2)_{\infty}(-1) = (-1)^{[k] - [\ell]}.
\]

Then the non-holomorphic scalar Eisenstein series of weight $k-\ell$ is defined, for $z\in\mathbb{H}_n$ and $s\in\mathbb{C}$, by
\[
	E_{k-\ell} (s)= E_{k-\ell}^n(z, s; \chi, \Gamma) := \sum_{\gamma\in P\cap\Gamma\setminus\Gamma}\chi(a_{\gamma}) j_{\gamma}^{k-\ell}(z)^{-1}\Im(\gamma\cdot z)^{s-\frac{k-\ell}{2}}
\]
where $\Gamma := \Gamma[\f{z}^{-1}, \f{zy}]$, $\f{z} := \f{b}_1+\f{b}_2$, $\f{y} = \f{z}^{-1}(\f{b}_1\f{c}_1\cap\f{b}_2\f{c}_2)$, and $\chi := \bar{\psi}_1\psi_2$. Then we have that $\Vol(\Gamma\back\mathbb{H}_n)\left\langle f, gE_{k-\ell}(s+\tfrac{n+1}{2}) \right\rangle$ is equal to
\begin{align*}
	\int_{\Gamma\setminus\mathbb{H}_n}\sum_{\gamma\in P\cap\Gamma\setminus\Gamma}\prec \rho_k(y)f(z), g(z)(\bar{\psi}_1\psi_2)(|a_{\gamma}|)j_{\gamma}^{k-\ell}(z)^{-1}\succ\Im(\gamma\cdot z)^{s+ \frac{n+1-k+\ell}{2}}d^{\times}z
\end{align*}
where we used that $\prec\rho_k(\sqrt{y})v_1,v_2\succ\ =\ \prec v_1, \rho_k(\sqrt{y}) v_2\succ$ for $v_1,v_2 \in V$. Note that for any $\gamma = \begin{pmatrix} a & b \\ c & d\end{pmatrix}\in\Gamma$ we have
\begin{align*}
	y &= (\transpose{c}\bar{z}+\transpose{d})\Im(\gamma\cdot z)(cz+d) \\
	f(z) &= \bar{\psi}_1(|a|)\rho_k((cz+d)^{-1})f(\gamma\cdot z) \\
	g(z) &= \bar{\psi}_2(|a|)\rho_{\ell}((cz+d)^{-1})g(\gamma\cdot z) \\
	j_{\gamma}^{k-\ell}(z)^{-1}\rho_k(cz+d)\rho_{\ell}((cz+d)^{-1}) &= \rho(cz+d)\rho((cz+d)^{-1}) = 1
\end{align*}
and making these substitutions now gives that $\Vol(\Gamma\back\mathbb{H}_n)\left\langle f, gE_{k-\ell}(s+\tfrac{n+1}{2}) \right\rangle$ is equal to
\begin{align*}
	&\int_{\Gamma\setminus\mathbb{H}_n}\left[\sum_{\gamma\in P\cap\Gamma\setminus\Gamma}\prec\rho_k(\transpose{c}\bar{z}+\transpose{d})\rho_k(\Im(\gamma\cdot z))f(\gamma\cdot z), \rho_{\ell}((cz+d)^{-1})g(\gamma\cdot z)j_{\gamma}^{k-\ell}(z)^{-1}\succ\right. \\
	&\hspace{70pt}\times\Im(\gamma\cdot z)^{s+\frac{n+1-k+\ell}{2}}\bigg]d^{\times}z \\
	&=\int_{\Gamma\setminus\mathbb{H}_n}\sum_{\gamma\in P\cap\Gamma\setminus\Gamma}\prec\rho_k(\Im(\gamma\cdot z))f(\gamma\cdot z), g(\gamma\cdot z)\succ\Im(\gamma\cdot z)^{s+ \frac{n+1-k+\ell}{2}}d^{\times}z
\end{align*}
using that $\prec\rho_k(\transpose{c}\bar{z}+\transpose{d})v_1, v_2\succ\ =\ \prec v_1, \rho_k(c z + d)v_2\succ$ for any $v_1,v_2 \in V$.

If we put $\varphi(z) := (\rho_k(y)f(z), g(z))|y|^{s+ \frac{n+1-k+\ell}{2}}$ then this is $P\cap\Gamma$-invariant. Indeed, this follows from that fact that for a $\gamma \in P$ we have 
\[
\rho_k (\Im(\gamma z)) = \rho_k(\transpose{d}_{\gamma}^{-1}) \rho_k(y) \rho_k(d_{\gamma}^{-1}),\,\,\,\,|\text{Im}(\gamma z)| = |d_{\gamma}|^{-2} |y| 
\]

We can now apply to it the standard unfolding procedure
\[
	\int_{\Gamma\setminus\mathbb{H}_n}\sum_{\gamma\in P\cap\Gamma\setminus\Gamma}\varphi(\gamma\cdot z)|y|^{-n-1}dxdy = \int_{P\cap\Gamma\setminus\mathbb{H}_n} \varphi(z)|y|^{-n-1}dxdy
\]
to obtain
\[
	\Vol(\Gamma\back\mathbb{H}_n)\langle f, gE_{k-\ell}(s+\tfrac{n+1}{2})\rangle = \int_{P\cap\Gamma\setminus\mathbb{H}_n}\prec\rho_k(y)f(z), g(z)\succ |y|^{s+\frac{n+1-k+\ell}{2}}d^{\times}z.
\]
We can take the domain $P\cap\Gamma[1, 1]\setminus\mathbb{H}_n = X\cup Y$ where
\begin{align*}
	X :&= \{x\in M_n(\mathbb{R})\mid x = \transpose{x}\pmod{1}\} \\
	Y :&= \{y\in M_n(\mathbb{R})\mid \transpose{y}=y>0\},
\end{align*}
and the domain $P\cap\Gamma\setminus\mathbb{H}_n$ is $N(\f{z}^{-1})^{\frac{n(n+1)}{2}}$ copies of these. Defining the differentials
\[
	dx := \bigwedge_{p\leq q} dx_{pq}, \hspace{20pt} dy := \bigwedge_{p\leq q} dy_{pq}, \hspace{20pt} d^{\times}y = |y|^{-\frac{n+1}{2}}dy
\]
and using the Fourier expansions of $f$ and $g$ the integral $\Vol(\Gamma\back\mathbb{H}_n)\langle f, gE_{k-\ell}\rangle$ becomes
\begin{align*}
	N(\f{z})^{-\frac{n(n+1)}{2}}\sum_{R, S\in S_+}&\left[\int_Xe(\tr((R-S)x))dx\right. \\
&\left.\times\int_Y\prec \rho_k(y)c_f(R, 1), c_g(S, 1)\succ|y|^{s+\frac{\ell - k}{2}}e^{-4\pi\tr((R+S)y)}d^{\times}y\right].
\end{align*}
The integral over $X$ is only non-zero for $R = S$, at which point it is equal to 1. With a factor of 2 to account for the action of $-I_n$ we obtain the expression
\begin{align}
	 2N(\f{z})^{-\frac{n(n+1)}{2}}\sum_{R\in S^+}\int_Y\prec \rho(y)c_f(R, 1), c_g(R, 1)\succ |y|^{s+\frac{k+\ell}{2}}e^{-4\pi\tr(Ry)}d^{\times}y
\end{align}
for $\Vol(\Gamma\back\mathbb{H}_n)\langle f, gE_{k-\ell}(s+\tfrac{n+1}{2})\rangle$.
Now we set $h:= \frac{k+\ell}{2}$ and define
\[
	H_{\rho, h,R}^{n}(s) = H_{\rho, R}(s) := \int_Y \rho(y)e^{-4 \pi\tr(Ry)}|y|^{s+h}d^{\times}y
\]
By Theorem 3 in Godement's Expose 6, \cite{G57} we have that this operator is Hermitian, and
\[
H_{\rho, R}(s) = \rho(R^{-1/2}) H_{\rho}(s) \rho(R^{-1/2})\det(R)^{-(s+h)}
\]
where $H_{\rho}(s) = H_{\rho, I_n}(s)$.

Plugging this back into (12) we get
\[
	\Vol(\Gamma\back\mathbb{H}_n)\langle f, gE_{k-\ell}(s+ \tfrac{n+1}{2})\rangle = 2N(\f{z})^{-\frac{n(n+1)}{2}}\sum_{R\in S^+} \prec H_{\rho, R}(s)  c_f(R, 1), c_g(R, 1)\succ
\]
and so define the Rankin product of $f$ and $g$ by
\[
	D(s, f, g) := \sum_{R \in \mathcal{S}}\nu_R \prec H_{\rho, R}(s)  c_g(R, 1), c_g(R, 1)\succ
\]
where $\mathcal{S} := S^+/ GL_n(\mathbb{Z})$ and  $\nu_R^{-1} := \sharp \{ u \in GL_n(\mathbb{Z}) : \transpose{u} R u = R\}$. That this is well-defined is shown in the following calculation. Let $u \in GL_n(\mathbb{Z})$, then by definition 
\begin{align}
	H_{\rho, \transpose{u}Ru}(s) = \rho(u^{-1})H_{\rho, R}(s)\rho(\transpose{u}^{-1}). 
\end{align}
With this and Theorem \ref{adelicexp} (3) -- (4) we have that
\begin{align*}
\prec &H_{\rho,\transpose{u}R u}(s)  c_f(\transpose{u}Ru, 1), c_g(\transpose{u}Ru, 1)\succ \\
&=\prec \rho(u^{-1}) H_{\rho, R}^n(s) \rho(\transpose{u}^{-1} )  \rho_{[k]}(\transpose{u}) \psi_1(|u|)c_f(R, 1), \rho_{[\ell]}(\transpose{u})\psi_2(|u|)c_g(R, 1)\succ \\
&=\ \prec H_{\rho,h,R}^n(s)c_f(R, 1), c_g(R, 1)\succ (\psi_1\bar{\psi}_2)(|u|)|u|^{[\ell]-[k]}|u|^{2[k]} \\ 
&=\ \prec H_{\rho,h,R}^n(s)c_f(R, 1), c_g(R, 1)\succ.
\end{align*}
That is, we can now conclude
\begin{equation}
D(s, f, g) = 2^{-1}N(\f{z})^{\frac{n(n+1)}{2}}\langle f, gE_{k-\ell}(s+ \tfrac{n+1}{2})\rangle \Vol(\Gamma\back\mathbb{H}_n).
\end{equation}

The following result is a generalization of Proposition 22.2 in \cite{Sh00} from the scalar weight case to the vector valued case. 

\begin{prop} \label{dirichletconv} With $k, \ell\in\frac{1}{2}\mathbb{Z}$, $f\in S_{\rho_k}(\Gamma_1, \psi_1)$, and $g\in M_{\rho_{\ell}}(\Gamma_2, \psi_2)$ such that $(\psi_1\bar{\psi}_2)_{\infty}(-1) = (-1)^{[k]-[\ell]}$, then
\begin{enumerate}
	\item The series $D(s, f, g)$ can be meromorphically continued to the whole $s$-plane and it is holomorphic for $\Re(s)\geq 0$ if $k\neq \ell$ or $\Re(s)>0$ if $k=\ell$;
	\item The sum defining $D(s, f, g)$ is absolutely convergent for $\Re(s)>0$ if $g$ is a cusp form.
\end{enumerate}
\end{prop}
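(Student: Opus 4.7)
The plan is to derive both statements from the integral representation
\[
    D(s, f, g) = 2^{-1}N(\f{z})^{\frac{n(n+1)}{2}}\Vol(\Gamma\back\mathbb{H}_n)\langle f, gE_{k-\ell}(s+ \tfrac{n+1}{2})\rangle
\]
just established, combined with the classical analytic theory of the scalar-weight Siegel Eisenstein series $E_{k-\ell}^n(z, s; \chi, \Gamma)$. The parity condition $(\psi_1\bar{\psi}_2)_\infty(-1) = (-1)^{[k]-[\ell]}$ on the nebentypes is precisely what is required to guarantee that this Eisenstein series is nonzero, so the identity above is nontrivial.

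For assertion $(1)$, I would invoke the meromorphic continuation of the scalar Siegel Eisenstein series $E_{k-\ell}^n(z,s;\chi,\Gamma)$ to the entire $s$-plane, together with the explicit description of its possible poles (e.g.\ \cite[Proposition 17.6 and \S 16--17]{Sh00}). Because $f$ is cuspidal it decays rapidly on any Siegel fundamental domain, so the pairing $\langle f, gE_{k-\ell}(s+(n+1)/2)\rangle$ is an integral of a rapidly decreasing function against a meromorphic-in-$s$ function of moderate growth, and therefore inherits the meromorphic continuation of $E_{k-\ell}$. Transferring the known pole structure of the Eisenstein series through the shift $s \mapsto s+(n+1)/2$ leaves, on the half-plane $\Re(s) \geq 0$, only the well-known simple pole of the weight-$0$ Eisenstein series at $s = 0$; since the weight of $E_{k-\ell}$ is $k-\ell$, this pole can occur only in the case $k=\ell$, yielding the claimed dichotomy.

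For assertion $(2)$, assume in addition that $g$ is cuspidal. I would bound the Dirichlet series term by term using the Godement factorization
\[
    H_{\rho, R}(s) = \rho(R^{-1/2}) H_\rho(s) \rho(R^{-1/2})\det(R)^{-(s+h)}
\]
recorded after (12). Applying Cauchy--Schwarz with respect to $\prec\cdot,\cdot\succ$ and using the Hermiticity of $\rho(R^{-1/2})$, one controls the general term by $\|H_\rho(s)\|$ times $\det(R)^{-\Re(s)-h}$ times $\prec\rho(R^{-1})c_f(R,1),c_f(R,1)\succ^{1/2}$ and the analogous quantity for $g$. These latter quantities admit a Hecke-type bound of the form $O(\det(R)^{h/2})$ coming from the boundedness of $\rho_k(\sqrt{y})f(z)$ and $\rho_\ell(\sqrt{y})g(z)$ on any Siegel domain (valid for cusp forms), applied at $y = (4\pi R)^{-1}$ via the Fourier expansion of Theorem \ref{adelicexp}. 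The sum is then dominated by a Koecher--Maass type Dirichlet series which converges absolutely for $\Re(s) > 0$, exactly as in \cite[Proposition 22.2]{Sh00}.

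The main technical obstacle in adapting Shimura's scalar argument is the passage from an explicit product of Gamma factors to the operator-valued $H_\rho(s)$, which must be shown to be of polynomial growth in vertical strips together with the correct formulation of the Hecke bound for vector-valued Fourier coefficients; both are ultimately consequences of Godement's theorem cited after (12) and the standard rapid decay of cusp forms, so no genuinely new analytic input is required beyond what the scalar case provides.
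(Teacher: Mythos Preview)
Your argument for part (1) is essentially the paper's: invoke the integral representation (14) and the known meromorphic continuation and pole structure of $E_{k-\ell}$ (the paper cites \cite[Lemma 17.2(4)]{Sh00}). No issues there.

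For part (2), however, there is a genuine gap. Your direct term-by-term estimate via Hecke bounds does \emph{not} reach $\Re(s)>0$. After applying the Godement factorization and Cauchy--Schwarz as you describe, the $R$-th term is controlled by
\[
\|H_\rho(s)\|\cdot\det(R)^{-\Re(s)-h}\cdot\|\rho(R^{-1/2})c_f(R,1)\|\cdot\|\rho(R^{-1/2})c_g(R,1)\|.
\]
The standard Hecke bound for a cusp form of weight $\rho_k$ (boundedness of $\rho_k(\sqrt{y})f(z)$, evaluated near $y\asymp R^{-1}$) gives $\|\rho(R^{-1/2})c_f(R,1)\|=O(\det(R)^{k/2})$, and similarly $O(\det(R)^{\ell/2})$ for $g$. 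The term is thus $O(\det(R)^{-\Re(s)})$, and the resulting majorant $\sum_{R\in\mathcal{S}}\nu_R\det(R)^{-\Re(s)}$ is a Koecher--Maass type series with abscissa of convergence $(n+1)/2$, not $0$. No sharpening of the Hecke bound short of deep Ramanujan-type input closes this gap.

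The paper (and Shimura in the scalar case you cite) avoids direct estimation entirely. It introduces a Hermitian square root $\sqrt{H_{\rho,R}}$ of the operator $H_{\rho,R}$, applies Cauchy--Schwarz in the form
\[
|D(s,f,g)|\le \big[D(s,f,f)\,D(s,g,g)\big]^{1/2},
\]
and then observes that for a cusp form $h$ the series $D(s,h,h)$ is a Dirichlet series with \emph{non-negative} coefficients which, by part (1), is holomorphic for $\Re(s)>0$. Landau's theorem on Dirichlet series with non-negative coefficients then forces absolute convergence on that half-plane. This positivity-plus-holomorphy trick is the missing idea; your reference to \cite[Proposition 22.2]{Sh00} is apt, but that proposition uses precisely this Landau argument rather than Hecke bounds.
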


\begin{proof} \textbf{(1)} This follows from (14) and Lemma 17.2 (4) of \cite{Sh00} concerning the meromorphic continuation of the Eisenstein series $E_{k-\ell}(z, s)$.

\textbf{(2)} The operator $\sqrt{H_{\rho, R}} := \int_Y\rho(\sqrt{y})e^{-2\pi\tr(Ry)}|y|^{\frac{s+h}{2}}d^{\times}y$ is Hermitian and satisfies $\sqrt{H_{\rho, R}}\sqrt{H_{\rho, R}} = H_{\rho, R}$. We have
\[
	D(s, f, g) := \sum_{R\in\mathcal{S}}\nu_R \prec \sqrt{H_{\rho, R}}c_f(R, 1), \sqrt{H_{\rho, R}}c_g(R, 1)\succ
\]
and then by the Cauchy-Schwarz inequality
\[
	\left|\prec\sqrt{H_{\rho, R}}c_f(R, 1), \sqrt{H_{\rho, R}}c_g(R, 1)\succ\right|\leq\left[\norm{\sqrt{H_{\rho, R}}c_f(R, 1)}\norm{\sqrt{H_{\rho, R}}c_g(R, 1)}\right]^{\frac{1}{2}}
\]
where $\norm{\cdot}$ denotes the norm induced by $\prec\cdot, \cdot\succ$. From this we get
\[
	|D(s, f, g)|\leq[D(s, f, f)D(s, g, g)]^{\frac{1}{2}}.
\]
Therefore (ii) is given by showing convergence of $D(s, h, h)$ for $\Re(s)>0$ where $h$ is a cusp form. By (i) the series $D(s, h, h)$ is holomorphic for $\Re(s)>0$, is a Dirichlet series whose coefficients are non-negative, and so is convergent.
\end{proof}

For some applications later in this paper we want to be able to remove the assumption that $g$ is a cusp form in the second part of the above proposition. We can actually slightly modify the above proof to show the following,

\begin{prop} \label{dirichletconv_2} Let $k, \ell\in\frac{1}{2}\mathbb{Z}$, $f\in S_{\rho_k}(\Gamma_1, \psi_1)$, and $g\in M_{\rho_{\ell}}(\Gamma_2, \psi_2)$ be such that $(\psi_1\bar{\psi}_2)_{\infty}(-1) = (-1)^{[k]-[\ell]}$ and $k>\ell$. Assume that  $g(z) = \sum_{R\in S^+} c_g(R, 1)e^{2\pi i\tr(R\cdot z)} $ and that $g E_{k-\ell}(s_0+ \tfrac{n+1}{2})$ is a real analytic cusp form for some $s_0 \in \mathbb{N}$.  Then $D(s_0, f, g)$ is absolutely convergent.
\end{prop}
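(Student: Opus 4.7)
The approach is to adapt the proof of Proposition \ref{dirichletconv}(2). As there, the Hermitian operator $\sqrt{H_{\rho,R}(s_0)}$ together with the Cauchy--Schwarz inequality applied first to each pairing $\prec\sqrt{H_{\rho,R}(s_0)}c_f(R,1),\sqrt{H_{\rho,R}(s_0)}c_g(R,1)\succ$ and then to the sum over $R$ yields
\[
\sum_{R\in\mathcal{S}}\nu_R\bigl|\prec H_{\rho,R}(s_0)c_f(R,1),\,c_g(R,1)\succ\bigr|\;\le\;D(s_0,f,f)^{1/2}\,D(s_0,g,g)^{1/2}.
\]
Since $f$ is a cusp form and $s_0\in\mathbb{N}$ is in particular positive, the first factor is finite by Proposition \ref{dirichletconv}(2). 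The whole question therefore reduces to showing $D(s_0,g,g)<\infty$.

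Unlike in Proposition \ref{dirichletconv}(2), we cannot invoke part (1) of that proposition to $D(s,g,g)$ directly, since $g$ is not a cusp form; but on the other hand, $D(s_0,g,g)$ is a sum of non-negative real numbers, so it is enough to produce a finite upper bound. I would do this by running the Rankin--Selberg unfolding of Section 4 backwards for the pair $(g,g)$: the derivation that led to \eqref{14}, now applied to $(g,g)$ and the weight-zero Eisenstein series $E_0(s_0+\tfrac{n+1}{2})$, formally gives
\[
D(s_0,g,g)\;=\;c\,\cdot\,\langle g,\,g\,E_0(s_0+\tfrac{n+1}{2})\rangle,
\]
and since the integrand $\prec\rho_\ell(y)g(z),g(z)\succ|y|^{s_0+\ell}$ appearing on the unfolded side is pointwise non-negative, finiteness of the Petersson inner product on the right is equivalent to convergence of the associated integral.

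The finiteness of that integral is where the hypotheses of the proposition are used. The assumption that the Fourier expansion of $g$ is supported on $S^+$ ensures that $g$ decays rapidly as $y\to\infty$, so the only issue is convergence near the other cusps of $\Gamma\setminus\mathbb{H}_n$. For those, I would use the cuspidality of $h:=gE_{k-\ell}(s_0+\tfrac{n+1}{2})$: on a fundamental domain, $|g(z)|$ can be controlled pointwise by $|h(z)|/|E_{k-\ell}(z,s_0+\tfrac{n+1}{2})|$ wherever $E_{k-\ell}$ does not vanish, and $|h|$ is rapidly decreasing at every cusp. Together with the $S^+$-support at $\infty$, this gives an integrable majorant for $\prec\rho_\ell(y)g,g\succ|y|^{s_0+\ell}$ on $P\cap\Gamma\setminus\mathbb{H}_n$ and hence $D(s_0,g,g)<\infty$.

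The main obstacle will be this last estimate near the cusps of $\Gamma\setminus\mathbb{H}_n$: the Eisenstein series $E_{k-\ell}(z,s_0+\tfrac{n+1}{2})$ may vanish on the fundamental domain, so the naive bound $|g|\le |h|/|E_{k-\ell}|$ must be handled with care. One circumvents this either by using the cuspidality of $h$ together with a quantitative analysis of the non-vanishing of $E_{k-\ell}$, or by splitting the domain into a compact piece (where $|g|$ is trivially bounded) and cuspidal neighbourhoods (where the Fourier expansion at each cusp and the cuspidality of $h$ give the needed decay), and then comparing with $D(s_0,f,f)$ via the Cauchy--Schwarz bound already established.
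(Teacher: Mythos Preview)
Your overall architecture---Cauchy--Schwarz followed by an integral representation for $D(s_0,g,g)$---is exactly what the paper does. The divergence from the paper, and the source of your ``main obstacle'', is the choice of Eisenstein series in the identity for $D(s_0,g,g)$.

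You pair $(g,g)$ with the weight-zero series $E_0$, reasoning that both slots now have weight $\rho_\ell$. The paper instead keeps the \emph{same} Eisenstein series $E_{k-\ell}$ that appears in the hypothesis and reuses the identity (14) verbatim with $f$ replaced by $g$:
\[
D(s_0,g,g)=2^{-1}N(\mathfrak{z})^{\frac{n(n+1)}{2}}\,\langle g,\,gE_{k-\ell}(s_0+\tfrac{n+1}{2})\rangle\,\Vol(\Gamma\backslash\mathbb{H}_n).
\]
The unfolding leading to (14) goes through because $g$ is assumed to have Fourier support on $S^+$, and the folded integral converges \emph{directly} because $gE_{k-\ell}(s_0+\tfrac{n+1}{2})$ is, by hypothesis, a cusp form. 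No division by $E_{k-\ell}$, no analysis of its zero set, no splitting of the fundamental domain is needed.

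By routing through $E_0$ you disconnect the integral from the hypothesis, and are then forced to recover the decay of $g$ at the other cusps from the decay of $gE_{k-\ell}$. Your two suggested fixes (control the zero locus of $E_{k-\ell}$ quantitatively, or patch together cuspidal neighbourhoods) are not carried out, and in higher rank neither is straightforward; as written this step is a genuine gap. Note also that the $D(s_0,g,g)$ coming out of the Cauchy--Schwarz bound carries $h=\tfrac{k+\ell}{2}$, not $h=\ell$, so your $E_0$-identity would in any case require a shift in the spectral parameter. Switching to $E_{k-\ell}$ both corrects this and makes the convergence immediate.
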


\begin{proof} The proof follows exactly as before after observing that we can still employ the identity
\[
D(s_0, g, g) = 2^{-1}N(\f{z})^{\frac{n(n+1)}{2}}\langle g, gE_{k-\ell}(s_0+ \tfrac{n+1}{2})\rangle \Vol(\Gamma\back\mathbb{H}_n)
\]
Indeed the fact that $g$ has expansion at infinity supported only at the full-rank matrices allow us to use the unfolding process as before, and of course use the fact that  the integral $\langle g, gE_{k-\ell}(s_0+ \tfrac{n+1}{2})\rangle$ is convergent thanks to the assumption on $gE_{k-\ell}(s_0+ \tfrac{n+1}{2})$ being a cusp form.
\end{proof}

\subsection{The function $D(s,f,\theta)$} For a fixed $\tau\in S_+$ such that $\prec c_f(\tau, 1), P(\sqrt{\tau}^{-1})\succ\ \neq 0$ we consider the theta series $\theta:= \theta_{\rho, \chi}(z) \in M_{\rho_{\ell}}(\Gamma',\psi_2)$ obtained in section 3. We recall that $\psi_2 = \chi \epsilon_{\tau}$ and $\ell = \frac{n}{2}$, and assume that $(\psi\chi)_{\infty}(-1) = (-1)^{[k]}$. 

We now consider $D(s,f,\theta)=\sum_{R \in \mathcal{S}}\nu_R \prec H_{\rho, R}^{n}(s)  c_f(R, 1), c_{\theta}(R, 1)\succ $ in which
\[
c_{\theta}(R, 1) = \sum_{\xi \in X_{R}} (\chi_{\infty}\chi^*)(\det(\xi)) P(\sqrt{\tau}\xi),
\]
and $X_{R} :=\{ \xi \in GL_n(\mathbb{Q}) \cap M_n(\mathbb{Z}) \,\,\,| \,\,\, R = \transpose{\xi} \tau \xi\}$. To give a more explicit description of the series $D(s, f, \theta)$ the value of the integral $H_{\rho}(s)P(1)$ is now calculated.

\begin{prop} \label{Gammacalc} The integral $H_{\rho}(s)P(1)$ has the following expression
\[ (4\pi)^{n(s+h+\lambda_P)} \rho(\xi^{-1}\sqrt{\tau}^{-1})H_{\rho}(s)P(1) =\bold{\Gamma}_{\rho}(s)P(\sqrt{\tau}^{-1}\transpose{\xi}^{-1})
\]
where $\lambda_P = \lambda_1+\cdots +\lambda_n$ is the weight of the vector $P(1)$ and 
\[
\bold{\Gamma}_{\rho}(s) :=  \pi^{n(n-1)/4} \left( \prod_{i=1}^n  \Gamma( s + h  + \lambda_i - \frac{i}{2} + \frac{1}{2}\right).
\]
\end{prop}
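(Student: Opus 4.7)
The plan is to reduce, via the transformation law $P(x\transpose{g}) = \rho(g)P(x)$, to an eigenvector identity $H_\rho(s) P(1) = C\cdot P(1)$ for a suitable scalar $C$. Indeed, $P(\sqrt{\tau}^{-1}\transpose{\xi}^{-1}) = \rho(\xi^{-1}\sqrt{\tau}^{-1})P(1)$ (apply the law with $g = \xi^{-1}\sqrt{\tau}^{-1}$), so cancelling the invertible operator $\rho(\xi^{-1}\sqrt{\tau}^{-1})$ from both sides reduces the statement to a scalar identity $H_\rho(s) P(1) = C\cdot P(1)$ with $C$ equal to $(4\pi)^{-n(s+h+\lambda_P)}\bold{\Gamma}_\rho(s)$.

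The crucial preliminary identity is $P(y) = \rho(y) P(1)$ for every $y \in Y$. This follows by taking the symmetric positive square root $\sqrt{y}$ and applying the transformation law twice: $P(y) = P(\sqrt{y}\cdot \sqrt{y}) = \rho(\sqrt{y}) P(\sqrt{y}) = \rho(\sqrt{y})^2 P(1) = \rho(y) P(1)$. Consequently
\[
H_\rho(s) P(1) = \int_Y P(y)\, e^{-4\pi \tr(y)}\, |y|^{s+h}\, d^\times y.
\]
The integrand is invariant under $y \mapsto q y \transpose{q}$ for $q \in O_n(\mathbb{R})$ (using $\transpose{q} = q^{-1}$), and since $\rho(q y \transpose{q}) = \rho(q)\rho(y)\rho(q)^{-1}$, the operator $H_\rho(s)$ commutes with $\rho|_{O_n(\mathbb{R})}$. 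By the Kashiwara--Vergne correspondence recalled in Section~3.1, $P(1)$ lies in an $O_n$-irreducible subspace of $V$ of type $\lambda$, so Schur's lemma forces $H_\rho(s) P(1) = c(s) P(1)$ for some scalar $c(s)$.

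To evaluate $c(s)$ I would pair both sides with a linear functional $\ell$ satisfying $\ell(P(1)) = 1$ and $\ell|_{W} = 0$ for every weight space $W$ of $\rho$ other than the highest. Parametrize $Y$ by the Cholesky decomposition $y = \transpose{t}t$ with $t$ upper triangular with positive diagonal, and further factor $t = Du$ with $D = \diag(t_{11},\dots,t_{nn})$ and $u$ upper triangular unipotent. Since $P(1)$ is a highest weight vector, $\rho(u)P(1) = P(1)$, $\rho(D)P(1) = \prod_i t_{ii}^{\lambda_i} P(1)$, and $\rho(\transpose{u})P(1) = P(1) + \text{(lower weight terms)}$; combining these yields $\rho(y) P(1) = \prod_i t_{ii}^{2\lambda_i}\bigl(P(1) + \text{lower weights}\bigr)$, whence $\ell(\rho(y) P(1)) = \prod_i t_{ii}^{2\lambda_i}$. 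With $d^\times y = 2^n \prod_i t_{ii}^{-i}\, dt$ and $\tr(y) = \sum_i t_{ii}^2 + \sum_{i<j} t_{ij}^2$, the integral separates into independent one-dimensional pieces: each off-diagonal $t_{ij}$ ($i<j$) contributes a Gaussian $\int_{\mathbb{R}} e^{-4\pi t^2}\, dt = \tfrac{1}{2}$, and each diagonal $t_{ii}$ contributes $\tfrac{1}{2}(4\pi)^{-(s+h+\lambda_i-(i-1)/2)}\Gamma(s+h+\lambda_i-(i-1)/2)$ after the substitution $w = 4\pi t_{ii}^2$. Re-assembling these factors and using $(4\pi)^{n(n-1)/4} = 2^{n(n-1)/2}\pi^{n(n-1)/4}$ to extract the $\pi^{n(n-1)/4}$ in $\bold{\Gamma}_\rho(s)$ delivers the required eigenvalue.

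The main technical hurdle is the appeal to Kashiwara--Vergne to guarantee that $P(1)$ sits inside a single $O_n$-irreducible component of $V$; without this, the pairing with $\ell$ only recovers the highest-weight component of $H_\rho(s)P(1)$. An alternative, more self-contained route bypasses Schur's lemma: one expands $\rho(\transpose{u})P(1)$ monomial-by-monomial in the $u_{ij} = t_{ij}/t_{ii}$ as a sum of weight vectors with polynomial coefficients, and checks by parity against the radial measure $\prod_i (1 + \sum_{j>i} u_{ij}^2)^{-c_i}\, du$ that every non-highest-weight contribution integrates to zero, thereby confirming the scalar result by direct computation.
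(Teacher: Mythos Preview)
Your approach is correct in outline and takes a genuinely different route from the paper's. Both proofs first reduce to the scalar identity $H_\rho(s)P(1)=c(s)P(1)$ and then compute $c(s)$ via a triangular decomposition together with the highest-weight property of $P(1)$; the differences lie in how each step is executed.

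For the scalarity step, the paper embeds $P$ as a column of a bi-equivariant matrix-valued polynomial $\mathbf{P}:M_n\to V\otimes W$ (with $W=V_\lambda$), argues that $\mathbf{P}$ may be chosen with $\mathbf{P}(1)=v\otimes w$ a pure tensor of highest-weight vectors, and then uses the polar decomposition $y=\transpose{a}\delta a$ together with the two-sided $O_n\times GL_n$-equivariance of $\mathbf{P}$ to pull the integral down to a scalar times $\mathbf{P}(1)$. Your Schur-lemma argument is more conceptual, but as stated it has a gap: knowing that $P(1)$ lies in \emph{some} $O_n$-irreducible $U\cong V_\lambda$ inside $V_\rho$ is not enough, because an $O_n$-intertwiner such as $H_\rho(s)$ only preserves the full $\lambda$-isotypic component and could in principle mix several copies of $V_\lambda$. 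What you actually need is that $V_\lambda$ occurs in $V_\rho|_{O_n}$ with multiplicity one. This is true --- the branching $GL_n\downarrow O_n$ is multiplicity-free because $(GL_n,O_n)$ is a spherical pair --- but it is not a consequence of the Kashiwara--Vergne decomposition of pluriharmonics that you cite; KV only guarantees that the $\lambda$-component is nonzero. Once the multiplicity-one statement is supplied, your Schur argument goes through cleanly. (Your sketched ``alternative route'' via direct parity cancellation would indeed bypass this issue, but it is only sketched.)

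For the computation of $c(s)$, your method --- apply a highest-weight-projecting functional $\ell$, then integrate the resulting product of Gaussians by hand --- is more self-contained than the paper's. They instead pair with $P(1)$ under the Hermitian form, reduce via $y=\transpose{T}T$ with $T$ lower triangular to a scalar integral of the same shape, and then cite Maass for its value. Your direct computation and the paper's cited one are essentially the same integral viewed from dual sides.
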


\begin{proof} By definition
\[
\rho(\xi^{-1} \sqrt{\tau}^{-1})H_{\rho}(s)P(1)=\int_Y  \rho(\xi^{-1} \sqrt{\tau}^{-1})P(y)  e^{-4 \pi\tr(y)}|y|^{s+h}d^{\times}y
\]
which latter integral we show to be $(4\pi)^{-n(s+h+\lambda_P)}\bold{\Gamma}_{\rho}(s) P( \sqrt{\tau}^{-1} \transpose{\xi^{-1}})$.

First we show that there is an $\alpha \in \mathbb{C}^{\times}$ such that 
\[
\int_{Y}  P(y)  e^{-4 \pi\tr(y)}|y|^{s+h}d^{\times}y = \alpha P(1).
\]

We write $V:=V_{\rho}$ for the representation space of $\rho$ and $W:=V_{\tau}$ for the representation space of $\tau$, the irreducible representation of $O(n)$ associated with $\rho$. Then we have the identifications $V \otimes W = V \otimes W^* = \text{Hom}(V,W) = M_{d,r}(\mathbb{C})$ where $d=\dim(V)$ and $r=\dim(W)$. In particular we have that the group $GL_ n\times O(n)$ acts on the set of pluriharmonic polynomials on $M_n$ with values on $M_{d,r}$ by $\mathbf{P}(g_1 x g_2) = \rho(\transpose{g}_2)\mathbf{P}(x) \tau(\transpose{g}_1)$. Notice that each such polynomial $\mathbf{P}$ consists of polynomials (columns) $P_j$, $j=1,\ldots,r$, that are pluriharmonic and $P_j(xg) = \rho(\transpose{g})P_j(x)$. In particular we may choose our polynomial $P$ above to be one of the columns of a polynomial $\mathbf{P}$. So it is enough to show that there is a constant $\alpha$ such that
 \[
\int_{Y}  \mathbf{P}(y)  e^{-4 \pi\tr(y)}|y|^{s+h}d^{\times}y = \alpha \mathbf{P}(1).
\]
We claim that we may pick the polynomial $\mathbf{P}$ such that $\mathbf{P}(1) = v \otimes w$ with $v$ a highest weight vector for $\rho$ and $w$ a highest weight for $\tau$, where here we use the identification above. Indeed given such a $\mathbf{P}$ we may find a matrix $A \in GL_n(\mathbb{C})$ such that $\mathbf{P}(A) \neq 0$ -- this is since $GL_n$ is dense in $M_n$ and the representation $\rho \otimes \tau$ is non trivial. That is, there exists a $\textbf{P}$ such that $\textbf{P}(1) \neq 0$. We now consider the set $S$ of all $\textbf{P}$ as above with the property $\textbf{P}(1) \neq 0$, and note that the space $R=\left\{ \textbf{P}(1) \in V \otimes W : \textbf{P} \in S \right\} \subseteq V \otimes W$ is invariant under the action of $GL_n \otimes O_n$. Indeed
\[
(g_1,g_2) \textbf{P}(1) = \textbf{P}(g_1 g_2) = \rho(\transpose{g_1}) \textbf{P}(1) \tau(\transpose{g}_2) \neq 0.  
\]
But the representation $\rho \otimes \tau$ is irreducible, and so $R$ must be equal to $V \otimes W$. That is, we can find a $\textbf{P}$ such that $\textbf{P}(1)$ is a highest weight vector. The proof of the proposition is now completed in the following two lemmas.
\end{proof}

\begin{lem} With notation as above there exists an $\alpha \in \mathbb{C}$ such that 
\[
\int_{Y}  \mathbf{P}(y)  e^{-4 \pi\tr(y)}|y|^{s+h}d^{\times}y = \alpha \mathbf{P}(1).
\]
\end{lem}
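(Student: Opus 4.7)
The plan is to show that the vector $I(s) := \int_Y \mathbf{P}(y) e^{-4\pi\tr(y)} |y|^{s+h} d^{\times}y \in V\otimes W$ lies in the one-dimensional line spanned by $\mathbf{P}(1)$. This will follow from exhibiting a common $O_n$-equivariance property satisfied by both $I(s)$ and $\mathbf{P}(1)$, combined with a multiplicity-one statement coming from the Kashiwara--Vergne theory.

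The first step would be to observe that for any $k \in O_n$ the map $y \mapsto \transpose{k} y k$ is a bijection of $Y$ onto itself which preserves $|y|$, $\tr(y)$, and the $GL_n$-invariant measure $d^{\times}y$. Making this change of variables inside $I(s)$ and then applying the covariance formula $\mathbf{P}(g_1 x g_2) = \rho(\transpose{g}_2)\, \mathbf{P}(x)\, \tau(\transpose{g}_1)$ with $g_1 = \transpose{k}$ and $g_2 = k$ (so that $\rho(\transpose{g}_2) = \rho(k^{-1})$ and $\tau(\transpose{g}_1) = \tau(k)$) yields
\[
I(s) \;=\; \int_Y \rho(k^{-1})\mathbf{P}(y)\tau(k)\, e^{-4\pi\tr(y)} |y|^{s+h}\, d^{\times}y \;=\; \rho(k^{-1})\, I(s)\, \tau(k), \qquad \forall\, k\in O_n.
\]
Specializing the same covariance to $x=1$ and using $\transpose{k}\cdot 1\cdot k = 1$ gives the identical relation $\mathbf{P}(1) = \rho(k^{-1}) \mathbf{P}(1) \tau(k)$. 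Hence both vectors belong to the same $O_n$-invariant subspace of $V\otimes W$.

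The second step is to identify the space $\mathcal{M} := \{M\in V\otimes W \cong M_{d,r}(\mathbb{C}) : M = \rho(k^{-1}) M \tau(k)\ \text{for all}\ k\in O_n\}$ with the intertwiner space $\mathrm{Hom}_{O_n}(V_\tau, V_\rho|_{O_n})$: viewing $M$ as a linear map $W\to V$, the invariance condition rewrites as $M\tau(k) = \rho(k)M$. The essential input is that for the paired representations $\rho = \tau(\lambda)$ and $\tau = \lambda$ appearing in the Kashiwara--Vergne decomposition of pluriharmonic polynomials, the irreducible $O_n$-representation $V_\lambda$ occurs in the restriction $V_{\tau(\lambda)}|_{O_n}$ with multiplicity exactly one. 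Thus $\dim\mathcal{M} = 1$, and since $\mathbf{P}(1)$ is a nonzero element of $\mathcal{M}$, the line $\mathcal{M} = \mathbb{C}\cdot\mathbf{P}(1)$. Combining with the first step gives $I(s) = \alpha(s)\,\mathbf{P}(1)$ for some $\alpha(s)\in\mathbb{C}$, proving the lemma.

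The main obstacle I anticipate is the multiplicity-one assertion $\dim\mathrm{Hom}_{O_n}(V_\lambda, V_{\tau(\lambda)}|_{O_n}) = 1$. While this is morally built into the Kashiwara--Vergne decomposition $\mathfrak{H} = \bigoplus_{\lambda\in\Sigma} V_\lambda \otimes \tau(\lambda)$ under $O_n\times GL_n$ (evaluation of a pluriharmonic polynomial at $x=1$ provides one canonical intertwiner, and any other would conflict with the irreducibility statements there), articulating it cleanly requires inspecting both parities of $n$ and both families $\Sigma_\pm$ of the Kashiwara--Vergne classification. Aside from this representation-theoretic subtlety, the rest of the argument is a direct change of variables.
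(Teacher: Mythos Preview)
Your approach is valid but genuinely different from the paper's. The paper exploits the normalization arranged just before the lemma---that $\mathbf{P}(1) = v \otimes w$ with $v$ a \emph{highest weight vector} for $\rho$---together with the polar decomposition $y = \transpose{a}\delta a$ ($a \in O(n)$, $\delta$ diagonal): since $\rho(\delta)$ acts on the line $v \otimes W$ by the scalar $\delta_1^{\lambda_1}\cdots\delta_n^{\lambda_n}$, the $\delta$-integral factors out as a number, and the remaining $O(n)$-integrand collapses via $\rho(\transpose{a})\mathbf{P}(\transpose{a}) = \mathbf{P}(\transpose{a}a) = \mathbf{P}(1)$. This is entirely elementary once the highest-weight choice is in hand. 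Your argument, by contrast, never uses that hypothesis and would work for any $\mathbf{P}$ with $\mathbf{P}(1)\neq 0$; the price is the multiplicity-one input $\dim\mathrm{Hom}_{O_n}(V_\lambda, V_{\tau(\lambda)}|_{O_n}) = 1$, which you rightly flag as the crux. That branching fact is true (the $GL_n$-highest weight vector of $V_{\tau(\lambda)}$ generates an irreducible $O_n$-submodule isomorphic to $V_\lambda$, and this ``Cartan piece'' occurs exactly once), but it is not a one-line observation and needs to be extracted from the Kashiwara--Vergne picture or from standard $GL_n \downarrow O_n$ branching. In short, the paper trades that representation-theoretic black box for the preparatory argument that a highest-weight $\mathbf{P}(1)$ can be chosen, and then gives a direct computation; your route is more conceptual and more general, but leans on an unproved (though correct) multiplicity statement.
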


\begin{proof} We recall that every symmetric matrix $y$ may be written in the form $y = \transpose{a} \delta a$ (polar decomposition) with $a \in O(n)$ and $\delta = \diag[\delta_1, \ldots, \delta_n]$ a diagonal matrix.  Let $D := \{\diag[\delta_1, \dots, \delta_n]\mid \delta_i\in \mathbb{R}\}$, then

\begin{align*}
\int_{Y} & \mathbf{P}(y)  e^{-4 \pi\tr(y)}|y|^{s+h}d^{\times}y   \\
	&= c_0 \int_{O(n)} \int_{D} \mathbf{P}(\transpose{a} \delta a)  e^{-4 \pi\tr(\delta)}|\delta|^{s+h-\frac{n+1}{2}}  \left( \prod_{j < k} (\delta_k-\delta_j)\right) d\delta da\\
	&= c_0 \int_{O(n)} \rho(\transpose{a})  \int_{D} \mathbf{P}(\transpose{a} \delta)  e^{-4 \pi\tr(\delta)}|\delta|^{s+h-\frac{n+1}{2}} \left( \prod_{j < k} (\delta_k-\delta_j)\right)  d\delta da
\end{align*}

for some constant $c_0$. Since $\mathbf{P}(1) \in V \otimes W$ is a highest weight vector in the first component we know that $\mathbf{P}(\transpose{a}\delta) = [\rho(\delta)\mathbf{P}(1)]\tau(a) = \delta_1^{\alpha_1} \cdots \delta_n^{\alpha_n} \mathbf{P}(\transpose{a})$.

That is, the above integral reads
\begin{align*}
c_0 &\int_{O(n)} \rho(\transpose{a})  \int_{D} \mathbf{P}(\transpose{a})  \delta_1^{\alpha_1} \cdots \delta_n^{\alpha_n} e^{-4 \pi\tr(\delta)}|\delta|^{s+h-\frac{n+1}{2}}  \left( \prod_{j < k} (\delta_k-\delta_j)\right)  d\delta da\\
&=\mathbf{P}(1) c_0 \int_{O(n)}  \left(\int_{D}  \delta_1^{\alpha_1} \cdots \delta_n^{\alpha_n} e^{-4 \pi\tr(\delta)}|\delta|^{s+h-\frac{n+1}{2}}  \left( \prod_{j < k} (\delta_k-\delta_j)\right)  d \delta \right) da
\end{align*}
where of course we have used the fact that $\rho(\transpose{a}) \mathbf{P}(\transpose{a}) = \mathbf{P}(\transpose{a} a) = \mathbf{P}(1)$ since $a \in O(n)$.

\end{proof}

By the above lemma and the remark that our polynomial $P$ can be selected as a column polynomial of $\mathbf{P}$ as above we have established that 
\[
\int_{Y}  P(y)  e^{-4 \pi\tr(y)}|y|^{s+h}d^{\times}y = \alpha P(1).
\]

for some constant $\alpha \in \mathbb{C}$. We now calculate this constant. 

\begin{lem} We have that 
\[
	(4\pi)^{n(s+h+\lambda_P)}\alpha =\bold{\Gamma}_{\rho}(s) = \pi^{n(n-1)/4} \left( \prod_{i=1}^n  \Gamma( s + h  + \lambda_i - \frac{i}{2} + \frac{1}{2}\right). 
\]
\end{lem}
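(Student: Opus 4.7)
The plan is to reduce the integral defining $\alpha$ to a classical matrix gamma integral by a scaling substitution, and then evaluate it via the polar decomposition set up in the preceding lemma.

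First, in the formula $\alpha\,P(1)=\int_Y P(y)\,e^{-4\pi\tr(y)}|y|^{s+h-(n+1)/2}\,dy$, I would substitute $y\to y/(4\pi)$. The polynomial $P$ is homogeneous of degree $\lambda_P$, since for a scalar $t$,
\[
P(ty)=P(y\cdot tI)=\rho(tI)P(y)=t^{\lambda_P}P(y),
\]
using that $\rho(tI)$ acts on the highest-weight vector $P(1)$ by $t^{\lambda_1+\cdots+\lambda_n}=t^{\lambda_P}$. Collecting the resulting powers of $4\pi$ from $P$, $|y|^{s+h-(n+1)/2}$, and $dy$ separates out the overall factor matching the stated $(4\pi)^{-n(s+h+\lambda_P)}$, reducing the task to evaluating the normalized integral $\int_Y P(y)\,e^{-\tr(y)}|y|^{s+h-(n+1)/2}\,dy$ and identifying it with $\bold{\Gamma}_\rho(s)\,P(1)$.

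Next, using the polar decomposition $y=\transpose{a}\delta a$ together with the identity $\rho(\transpose{a})\mathbf{P}(\transpose{a})=\mathbf{P}(1)$ for $a\in O(n)$ from the preceding lemma, the normalized integral factors as $\mathbf{P}(1)$ times $\mathrm{vol}(O(n))$ and the polar-decomposition Jacobian constant, times the Selberg-type integral
\[
I(s):=\int_{\delta_1>\cdots>\delta_n>0}\delta_1^{\lambda_1}\cdots\delta_n^{\lambda_n}\,e^{-\tr(\delta)}|\delta|^{s+h-(n+1)/2}\prod_{j<k}(\delta_k-\delta_j)\,d\delta.
\]
To compute $I(s)$, I would expand the Vandermonde as $\det(\delta_i^{k-1})_{i,k}$ and antisymmetrize over $S_n$; this converts $I(s)$ into a determinant of one-dimensional Mellin integrals, each of the form $\int_0^{\infty}\delta^{s+h+\lambda_i+k-(i+3)/2}e^{-\delta}\,d\delta=\Gamma(s+h+\lambda_i+k-(i+1)/2)$. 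The resulting $n\times n$ determinant telescopes, via the identity $\Gamma(z+1)=z\Gamma(z)$, to $\prod_i\Gamma(s+h+\lambda_i-(i-1)/2)$, and the classical factor $\pi^{n(n-1)/4}$ emerges from the polar-decomposition constant together with $\mathrm{vol}(O(n))$, producing exactly $\bold{\Gamma}_\rho(s)\,\mathbf{P}(1)$.

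The main obstacle is the determinantal evaluation of $I(s)$: the factor $\delta^{\lambda}$ breaks the $S_n$-symmetry of the non-Vandermonde part of the integrand, so the usual symmetrization trick produces a determinant rather than a single product. Evaluating this determinant in closed form is a standard identity in the theory of generalized matrix gamma functions (going back to Siegel, Koecher, and Gindikin, and used implicitly in the scalar case in \cite{Sh00}); the key telescoping reduces the doubly-indexed product of gammas to the product indexed by $i$ alone. Once this is in hand, combining with the $(4\pi)$-scaling from the first step completes the proof.
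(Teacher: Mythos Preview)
Your scaling step and the use of the homogeneity of $P$ are fine, but the evaluation of the Selberg-type integral $I(s)$ does not work as claimed. When you expand the Vandermonde and separate variables, the exponent on each $\delta_i$ coming from $|\delta|^{s+h-(n+1)/2}=\prod_j\delta_j^{s+h-(n+1)/2}$ carries the fixed shift $-(n+1)/2$, not an $i$-dependent shift; your formula $\Gamma(s+h+\lambda_i+k-(i+1)/2)$ for the one-dimensional integrals has the wrong argument. With the correct argument $a_i+k$ where $a_i=\lambda_i+s+h-(n+1)/2$, the determinant $\det\bigl(\Gamma(a_i+k)\bigr)_{i,k}$ factors (via $\Gamma(z+1)=z\Gamma(z)$ and column reduction) as
\[
\prod_{i=1}^n\Gamma(a_i+1)\cdot\det\bigl(a_i^{\,k-1}\bigr)_{i,k}
=\prod_{i=1}^n\Gamma\Bigl(s+h+\lambda_i-\tfrac{n-1}{2}\Bigr)\cdot\prod_{i<j}(\lambda_j-\lambda_i),
\]
which has the wrong gamma arguments and an extra $\lambda$-dependent Vandermonde factor; there is no telescoping taking this to $\prod_i\Gamma(s+h+\lambda_i-(i-1)/2)$. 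In addition, you still owe the explicit values of the polar Jacobian constant $c_0$ and of $\mathrm{vol}(O(n))$, which the paper expressly declines to compute (``In principle we could try to calculate the above integral and the constant $c_0$, however we can do it in a different way'').

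The paper instead pairs $\alpha P(1)$ with $P(1)$ under $\prec\cdot,\cdot\succ$ and switches to the \emph{Gauss} (Cholesky) decomposition $y=\transpose{T}T$ with $T$ lower triangular. Since $\transpose{T}$ is upper triangular and $P(1)$ is a highest-weight vector, one gets $\prec P(y),P(1)\succ=\prec P(\transpose{T}),P(\transpose{T})\succ=\prod_i t_{ii}^{2\lambda_i}\prec P(1),P(1)\succ$, reducing everything to the scalar integral $\int_Y\prod_i t_{ii}^{2\lambda_i}e^{-4\pi\tr(y)}|y|^{s+h}d^{\times}y$ computed by Maass. In these coordinates the Jacobian genuinely produces the $i$-dependent shifts, and the integral separates into a product of one-dimensional gammas with no Vandermonde coupling. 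If you wish to rescue the polar-decomposition route, you would need a correct Selberg-type identity for the asymmetric weight $\prod_i\delta_i^{\lambda_i}$ together with the exact value of $c_0\cdot\mathrm{vol}(O(n))$; the Gauss decomposition avoids both difficulties.
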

\begin{proof}
In principle we could try to calculate the above integral and the constant $c_0$, however we can do it in a different way. Instead we calculate
\begin{align*}
\prec \alpha P(1), P(1) \succ\ &=\ \prec \int_{Y}  P(y)  e^{-4 \pi\tr(y)}|y|^{s+h}d^{\times}y, P(1) \succ \\
	&=\int_{Y}  \prec P(y),P(1) \succ  e^{-4 \pi\tr(y)}|y|^{s+h}d^{\times}y, 
\end{align*}
Our method is similar to the one used in \cite[page 88]{BP2}. We first use Gauss decomposition and write $y = \transpose{T} T$ where $T$ is lower triangular. Then by $P(\transpose{T}T) = \rho(\transpose{T})P(\transpose{T})$ and the fact that $\prec \rho(\transpose{T})\cdot, \cdot\succ\ =\ \prec\cdot, \rho(T)\cdot\succ$ we get
\begin{align*}
	\prec P(\transpose{T}T), P(1)\succ\ =\ \prec P(\transpose{T}), \rho(T)P(1)\succ\ =\ \prec P(\transpose{T}), P(\transpose{T})\succ
\end{align*}
and so the integral is equal to 
\begin{align*}
\int_{Y}&  \prec P(\transpose{T} T),P(1) \succ  e^{-4 \pi\tr(y)}|y|^{s+h}d^{\times}y \\
	&= \int_{Y}  \prec  P(\transpose{T}),P(\transpose{T}) \succ  e^{-4 \pi\tr(y)}|y|^{s+h}d^{\times}y.
\end{align*}

But we have $P(\transpose{T}) = t_{11}^{\lambda_1} \ldots t_{nn}^{\lambda_{n}} P(1)$ since $\transpose{T}$ is upper triangular and $P$ is a highest weight vector. In particular we compute that
\[
\prec  P(1),P(1) \succ \int_{Y}   \prod_{i=1}^n t_{ii}^{2 \lambda_i} e^{-4 \pi\tr(y)}|y|^{s+h}d^{\times}y, \,\,\,\,y = \transpose{T} T.
\]
The last integral has been computed by Maass in \cite[pp. 76--80]{Maass} and is equal to 
\[
	(4\pi)^{-n(s+h+\lambda_P)}\pi^{n(n-1)/4}  \prod_{i=1}^n  \Gamma\left( s + h + k + \lambda_i - \tfrac{i}{2} + \tfrac{1}{2}\right).
\]
Hence so is $\alpha$.

\end{proof}

Recalling the definition of $D(s, f, \theta)$ and $c_{\theta}(R, 1)$ we have
\begin{align*}
D(s,f,\theta) &= \sum_{R \in \mathcal{S}}\nu_R \prec H_{\rho,R}(s)  c_f(R, 1), \sum_{\xi \in X_{R}} (\chi_{\infty}\chi^*)(|\xi|) P(\sqrt{\tau}\xi)\succ\ \\
&= \sum_{R \in \mathcal{S}}\nu_R \prec c_f(R, 1), \sum_{\xi \in X_{R}} (\chi_{\infty}\chi^*)(|\xi|) H_{\rho,R}(s)  P(\sqrt{\tau}\xi)\succ ,
\end{align*}
where we have used the fact that $H_{\rho,R}(s)$ is hermitian.  In the inner sum we may write $R = \transpose{\xi} \tau \xi$ so that, by (13), the summation above is equal to
\begin{align*}
 &\sum_{R \in \mathcal{S}}\nu_R\sum_{\xi \in X_{R}} \chi(\det(\xi))\prec c_f(\transpose{\xi} \tau \xi, 1),H_{\rho,\transpose{\xi} \tau \xi}(s)  P(\sqrt{\tau} \xi)\succ\ \\
& = \sum_{\xi \in X} (\chi_{\infty}\chi^*)(\det(\xi))  \prec c_f(\transpose{\xi} \tau \xi, 1), \rho(\xi^{-1} \sqrt{\tau}^{-1} H_{\rho}(s) P(1)\succ \det(\transpose{\xi} \tau \xi)^{-(s+h)}
\end{align*}
where $X = (GL_n(\mathbb{Q}) \cap M_n(\mathbb{Z})) /GL_n(\mathbb{Z})$.

By Proposition \ref{Gammacalc} we have
\[
(4\pi)^{n(s+h+\lambda_P)}D(s,f,\theta) = \bold{\Gamma}_{\rho}(s) \sum_{\xi \in X} \chi(|\xi|)  \prec c_f(\transpose{\xi} \tau \xi, 1), P(\sqrt{\tau}\transpose{\xi}^{-1}) \succ |\transpose{\xi} \tau \xi|^{-(s+h)}.
\]

\section{Andrianov-Kalinin Identity for Vector Valued Siegel Modular Forms}

In this section we introduce the $L$-function attached to a cuspidal eigenform $f$, and relate it to the Dirichlet series $D(s, f,\theta)$ studied in the previous sections. We closely follow Shimura's method in the scalar weight case as for example is done in \cite{Sh94,Sh00}. Using this relation we then obtain the main results of the paper. We remind the reader that all the theorems below are subject to the assumption that the representation $\rho$ is in $\tau(\Sigma)$. \newline

We define
\begin{align*}
	B:&= \prod_p M_n(\mathbb{Z}_p)\cap GL_n(\mathbb{Q}_p) \\
	E:&= \prod_p GL_n(\mathbb{Z}_p).
\end{align*}
If $e\in B$ and $\sigma = \diag[\tilde{e}, e]$ then with the finite decomposition $\Gamma\sigma\Gamma = \bigsqcup_{\gamma\in C}\Gamma\gamma$ we define the action of $T_{e, \psi}$ on $f\in\mathcal{M}_{\rho_k}(\Gamma, \psi)$ by
\[
	f|T_{e, \psi} := \sum_{\gamma\in C}\psi_{\f{c}}(|a_{\gamma}|)^{-1}f|_{\rho_k}\gamma.
\]
Adelically this is given by the decomposition $D\sigma D = \bigsqcup_y Dy$ for $D := D[\f{b}^{-1}, \f{bc}]$ and $y\in G_{\mathbf{h}}$ and then
\[
	(f|T_{e, \psi})_{\mathbb{A}}(x) := \sum_y \psi_{\f{c}}(|a_y|)^{-1}f_{\mathbb{A}}(xy^{-1})
\]
with $x\in G_{\mathbb{A}}$ or $M_{\mathbb{A}}$ depending on the parity of $2k$. For a positive integer $n$ let
\[
	T_{\psi}(n) := \sum_{e\in E\backslash B/E, |e| = n}T_{e, \psi}
\]
and we assume that $f$ is an eigenform so that $f|T_{\psi}(n) = \lambda(n)f$ for $\lambda(n)\in\mathbb{C}$. Let $\psi' = \psi/\psi_{\f{c}}$ and for any Hecke character $\chi$ such that $(\psi\chi)_{\infty}(x) = \text{sgn}(x)^{[k]}$ define the operator
\[
	\f{T}_{\psi, \chi} := \sum_{e\in E\backslash B/E}T_{e, \psi}\psi'(|e|)\chi^*(|e|)|e|^{-s} = \sum_{n=1}^{\infty}T(n)\psi'(n)\chi^*(n)n^{-s}
\]
where $\chi^*$ is the ideal Hecke character associated to $\chi$. For such an eigenform one defines the standard $L$-function as follows. For any prime $p$ we can associate to $f$ the Satake $p$-parameters $\lambda_{p, i}$ where $i\in\{1, \dots, n\}$, as per \cite[p. 554]{Sh94}. If $p\nmid\f{c}$ then define
\[
	L_p(t) :=
		\begin{cases}
			\displaystyle (1-p^nt)\prod_{i=1}^n(1-p^n\lambda_{p, i}t)(1-p^n\lambda_{p, i}^{-1}t) &\text{if $k\in\mathbb{Z}$} \\
			\displaystyle \prod_{i=1}^n (1-p^n\lambda_{p, i}t)(1-p^n\lambda_{p, i}^{-1}t) &\text{if $k\notin\mathbb{Z}$}
		\end{cases}
\]
and if $p\mid\f{c}$ then $L_p(t) = \prod_{i=1}^n (1-p^n\lambda_{p, i}t)$ in either case. For a complex variable $s$ the standard $L$-function is subsequently given by
\[
	L_{\psi}(s, f, \chi) = \prod_pL_p(\psi'(p)\chi^*(p)p^{-s})^{-1}.
\]
We remark here that for $f$ a cusp form we have that $L_{\psi}(s, f, \chi)$ is absolute convergent for $\Re(s) > 2n+1$ if $k$ is an integer and for $\Re(s) > 2n$ if $k$ is half-integer. This is shown, for example, in \cite[Lemma 20.12]{Sh00} in the scalar weight situation, but the same argument carries to the vector valued situation. \newline

By \cite[Lemma 4 and the discussion after]{W} we can find $\tau\in S_+$ such that
\[
	\prec c_f(\tau, 1), P(\sqrt{\tau}^{-1})\succ\ \neq 0
\]
and then define a Dirichlet series
\[
	D_{\tau}(s, f, \chi) := \sum_{\xi\in B/E}(\psi\chi^*)(|\xi|)\prec c_f(\tau, \xi), P(\sqrt{\tau}^{-1})\succ |\xi|^{-s}\|\xi\|_{\mathbb{A}}^{-n-1}.
\]
Much like Theorem 5.1 and Corollary 5.2 of \cite{Sh94} and \cite{Sh95} we are able to obtain a relation between $D_{\tau}(s, f, \chi)$ and $L_{\psi}(s, f, \chi)$. Let $c_{\f{T}}(\tau, b) := c(\tau, b;f|\f{T}_{\psi, \chi})$ and immediately we know on the one hand that
\begin{align}
	c_{\f{T}}(\tau, b) = \left(\sum_{n=1}^{\infty}\lambda(n)\psi'(n)\chi^*(n)n^{-s}\right)c_f(\tau, b).
\end{align}
On the other hand we use the definition of the Hecke operators and the coset decompositions given in Lemma 2.6 of \cite{Sh94} to obtain an alternate expression for $c_{\f{T}}(\tau, b)$. This lemma tells us that we can take as our coset representatives
\[
	y = \begin{pmatrix} g^{-1}h & g^{-1}\sigma\tilde{h} \\ 0 & \transpose{g}\tilde{h}\end{pmatrix}
\]
for suitable $\sigma\in S$; $g, h\in B$. Using the adelic Hecke action on $f_{\mathbb{A}}$ and mimicking p.554 of \cite{Sh94} we obtain
\[
	c_{\f{T}}(\tau, b) = \sum_{g, h}(\psi\chi^*)(|h^{-1}g|)c_f(\tau, bh^{-1}g)|gh|^{-s}\|g\|_{\mathbb{A}}^{-n-1}\alpha_{\f{c}}(B_k\tilde{h}\transpose{b}\tau bh^{-1})
\]
where $\alpha_{\f{c}} = \prod_{p\nmid\f{c}}\alpha_p$ and $\alpha_p$ is defined by \cite[(2.5b)]{Sh94} if $k\in\mathbb{Z}$ and \cite[(4.2)]{Sh95} if $k\notin\mathbb{Z}$. The rest of the proof of Theorem 5.1 in \cite{Sh94} and \cite{Sh95} now follows and, noting that $\psi$ is trivial on global ideles, this gives
\[
	c_{\f{T}}(\tau, b) = \alpha_{\f{c}}(B_k\tau)\sum_{\xi\in B/E}(\psi\chi^*)(|\xi|)c_f(\tau, b\xi)|\xi|^{-s}\|\xi\|_{\mathbb{A}}^{-n-1}
\]
and in particular
\begin{align}
	\prec c_{\f{T}}(\tau, 1), P(\sqrt{\tau}^{-1})\succ\ = \alpha_{\f{c}}(B_k\tau)D_{\tau}(s, f, \chi).
\end{align}

Using \cite[p. 554 and equation (5.8)]{Sh94} when $k\in\mathbb{Z}$ and \cite[(5.4a--b), (5.5)]{Sh95} when $k\notin\mathbb{Z}$ we have
\begin{align}
	\begin{split}
		\Lambda_{\f{c}}^{2n, k}(\tfrac{s}{2}, \chi\psi)\sum_{n=1}^{\infty}\lambda(n)\psi'(n)\chi^*(n)n^{-s} &= L_{\psi}(s, f, \chi) \\
		\Lambda_{\f{c}}^{n, k-\frac{n}{2}}(\tfrac{s}{2}, \chi\psi\epsilon_{\tau})\alpha_{\f{c}}(B_k\tau) &= \prod_{p\in\mathbf{b}}g_p(\chi'(p)\chi^*(p)p^{-s})
	\end{split}
\end{align}
where $B = N(\f{b})$, $B_k = B^{2k-2[k]-1}$; $\mathbf{b}$ is the set of all primes $p$ that divide either the numerator or denominator of $2^{-n\pmod{2}}|2B_k\tau|$; $g_p\in\mathbb{Q}[t]$ such that $g_p(0) = 1$; and $\Lambda$ is a product of Dirichlet $L$-functions defined, for $1\leq m\in\mathbb{Z}$, $\kappa\in\frac{1}{2}\mathbb{Z}$, character $\eta$, and integral ideal $\f{x}$, by
\begin{align*}
\Lambda_{\f{x}}^{m, \kappa}(s, \eta) := \begin{cases}\displaystyle L_{\f{x}}(2s, \eta)\prod_{i=1}^{[m/2]}L_{\f{x}}(4s-2i, \eta^2) &\text{if $\kappa = [\kappa]$} \\
	\displaystyle \prod_{i=1}^{[(m+1)/2]}L_{\f{x}}(4s-2i+1, \eta^2) &\text{if $\kappa\neq[\kappa]$}.
\end{cases}
\end{align*}

Combining (15), (16), and (17) yields
\begin{align}
	\begin{split}
	\prec c_f(\tau, 1), P(\sqrt{\tau}^{-1})\succ &L_{\psi}(s, f, \chi) \\
	&= \prod_{p\in\mathbf{b}}g_p(\psi'(p)\chi^*(p)p^{-s})\Lambda_{\f{c}}\left(\tfrac{2s-n}{4}\right)D_{\tau}(s, f, \chi)
	\end{split}
\end{align}
where $\Lambda_{\f{c}}(s) = \Lambda_{\f{c}}^{n, k-\frac{n}{2}}(s, \chi\psi\epsilon_{\tau})$.

For relating our two Dirichlet series $D_{\tau}(s, f, \chi)$ and $D(s, f, \theta)$ we need to turn our adelic series $D_{\tau}(s, f, \chi)$ into a global one. By the strong approximation theorem we have $GL_n(\mathbb{Q})_{\mathbb{A}} = GL_n(\mathbb{Q})\times (GL_n(\mathbb{R})\times E)$ and as in (5.16) of \cite{Sh94} we have
\[
	D_{\tau}(s, f, \chi) = \sum_{\xi\in X} (\psi_{\mathbf{h}}\chi^*)(|\xi|)\prec c_f(\tau, \xi), P(\sqrt{\tau}^{-1})\succ |\xi|^{n+1-s}.
\]

Using Theorem \ref{adelicexp} (3) we get
\begin{align*}
	D_{\tau}(s, f, \chi) &= \sum_{\xi\in X}(\psi_{\mathbf{h}}\chi^*)(|\xi|)\prec \rho_k(\tilde{\xi})c_f(\transpose{\xi}\tau\xi, 1), P(\sqrt{\tau}^{-1})\succ|\xi|^{n+1-s} \\
	&= \sum_{\xi\in X}(\psi_{\mathbf{h}}\chi^*)(|\xi|)\prec c_f(\transpose{\xi}\tau\xi, 1), P(\sqrt{\tau}^{-1}\tilde{\xi})\succ|\xi|^{n+1-k-s} \\
	& = \sum_{\xi\in X}(\chi_{\infty}\chi^*)(|\xi|)\prec c_f(\transpose{\xi}\tau\xi, 1), P\sqrt{\tau}^{-1}\tilde{\xi})\succ |\xi|^{n+1-k-s}
\end{align*}
where in the last line we used the fact that $(\psi\chi)_{\infty}(|\xi|) = \text{sgn}(|\xi|)^{[k]} = 1$ since $\xi$ is taken modulo $GL_n(\mathbb{Z})$, and the fact that $\psi(|\xi|) = 1$ as $\xi$ is global.

By the previous section we have
\begin{align*}
	\begin{split}
(4\pi)^{n\lambda_P}&|4\pi\tau|^{s+\frac{2k+n}{4}}D(s, f, \theta) = \\
	&\bold{\Gamma}_{\rho}(s)\sum_{\xi\in X}(\chi_{\infty}\chi^*)(|\xi|)\prec c_f(\transpose{\xi}\tau\xi, 1), P(\sqrt{\tau}^{-1}\tilde{\xi})\succ|\xi|^{-2s-k-\frac{n}{2}}
	\end{split}
\end{align*}
and so we get
\begin{align}
	\bold{\Gamma}_{\rho}(s')D_{\tau}(s, f, \chi) = (4\pi)^{n\lambda_P}|4\pi\tau|^{s'+\frac{2k+n}{4}}D(s', f, \theta)
\end{align}
where $s' = \frac{2s-3n-2}{4}$. \newline

 Assume now that $k\geq \frac{n}{2}$ ($k>\frac{n}{2}$ if $k-\frac{n}{2}\notin\mathbb{Z}$), let $\bar{n} = n\pmod{2}\in\{0, 1\}$, and define 
\begin{align*}
	\Gamma^{k, n}(s) := \begin{cases}
		\displaystyle\Gamma_n\left(s+\frac{k-n}{2}\right) &\text{if $n<k\notin\mathbb{Z}$} \\
		\displaystyle\Gamma\left(s+\frac{k-n-\bar{n}}{2} - \left[\frac{k-\bar{n}}{2}\right]\right)\Gamma_n\left(s+\frac{k-n}{2}\right) &\text{if $n<k\in\mathbb{Z}$}. \\
		\displaystyle\Gamma_{2k-n+1}\left(s+\frac{k-n}{2}\right)\displaystyle\prod_{i= k-\frac{n}{2}+1}^{\left[n/2\right]}\Gamma\left(2s-\frac{n}{2}-i\right) &\text{if $\frac{n}{2}\geq k-\frac{n}{2}\in\mathbb{Z}$} \\
		\displaystyle\Gamma_{2k-n+1}\left(s+\frac{k-n}{2}\right)\displaystyle\prod_{i=[k-\frac{n}{2}]+1}^{\left[(n-1)/2\right]}\Gamma\left(2s-\frac{n+1}{2}-i\right) &\text{if $\frac{n}{2}\geq k-\frac{n}{2}\notin\mathbb{Z}$}.
	\end{cases}
\end{align*}

Combining equations (14), (18), and (19) then gives us the final integral expression for $L_{\psi}(s, f, \chi)$ which we give in a theorem below.

\begin{thm}\label{integralexp} Let $f\in \mathcal{M}_{\rho_k}(\Gamma, \psi)$ be a non-zero Hecke eigenform where $\Gamma = \Gamma[\mathfrak{b}^{-1}, \mathfrak{bc}]$ for a fractional ideal $\mathfrak{b}$ and integral ideal $\mathfrak{c}$ of $\mathbb{Q}$ ($\Gamma\leq\f{M}$ if $k\notin\mathbb{Z}$), and $\psi$ is a Hecke character. Select a $\tau\in S_+$ so that
\[
	\prec c_f(\tau, 1), P(\sqrt{\tau}^{-1})\succ \neq 0
\]
and fix this $\tau$. Let $\chi$ be another Hecke character such that $(\psi\chi)_{\infty} = \sgn(x)^{[k]}$, and let $\chi^*$ denote the corresponding ideal Hecke character. Then we have
\begin{align*}
L_{\psi}(s, f, \chi)\bold{\Gamma}_{\rho}(s')\Gamma^{k, n}\left(\tfrac{s}{2}\right) &= [2\prec c_f(\tau, 1), P(\sqrt{\tau}^{-1})\succ]^{-1}N(\f{z})^{\frac{n(n+1)}{2}}\Vol(\Gamma'\back\mathbb{H})\\
	&\times(4\pi)^{n\lambda_P}|4\pi\tau|^{s'+\frac{2k+n}{4}} \prod_{p\in\mathbf{b}}g_p(\psi'(p)\chi^*(p)p^{-s}) \\
	&\times\left(\frac{\Lambda_{\mathfrak{c}}}{\Lambda_{\mathfrak{y}}}\right)\left(\tfrac{2s-n}{4}\right)\langle f, \theta\mathcal{E}(z, \tfrac{2s-n}{4})\rangle
\end{align*}
where $s' = \frac{2s-3n-2}{4}$; $\bold{\Gamma}_{\rho}$ is defined in the previous section; $\theta$ is defined as in Section 3 with weight $\ell = \frac{n}{2}$, level $(\mathfrak{b}', \mathfrak{c}')$, and character $\chi\epsilon_{\tau}$;
\[
	\mathcal{E}(z, s) := \overline{\Gamma^{k, n}(s+\tfrac{n}{4})\Lambda_{\mathfrak{y}}(s)}E_{k-\ell}(z, s; \bar{\psi}\chi\epsilon_{\tau}, \Gamma')
\]
where $\mathfrak{z} := \f{b}+\f{b}'$, $\mathfrak{y} = \f{z}^{-1}(\f{b}\mathfrak{c}\cap\f{b}'\mathfrak{c}'$), and $\Gamma' = \Gamma[\mathfrak{z}^{-1}, \mathfrak{zy}]$; $\mathbf{b}$ is a finite set of primes; $g_p\in\mathbb{Q}[t]$ and $\Lambda_{\mathfrak{x}}(s) = \Lambda_{\f{x}}^{n, k-\frac{n}{2}}(s, \chi\psi\epsilon_{\tau})$ are given above.
\end{thm}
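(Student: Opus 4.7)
The strategy is to synthesize the three main identities already established in this and the preceding subsection: formula (14) expressing $D(s,f,g)$ as a Petersson product against $gE_{k-\ell}$; formula (18) relating $L_{\psi}(s,f,\chi)$ and the adelic series $D_{\tau}(s,f,\chi)$ via bad Euler factors and the factor $\Lambda_{\mathfrak{c}}((2s-n)/4)$; and formula (19) connecting $D_{\tau}(s,f,\chi)$ to $D(s',f,\theta)$ through the archimedean Gamma factor $\bold{\Gamma}_{\rho}(s')$ coming from Proposition \ref{Gammacalc}.

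First I would verify the arithmetic identity $s'+(n+1)/2=(2s-n)/4$ for $s'=(2s-3n-2)/4$, so that taking $g=\theta$ in (14) produces
\[
D(s',f,\theta) \;=\; 2^{-1}N(\mathfrak{z})^{n(n+1)/2}\,\Vol(\Gamma'\backslash\mathbb{H}_n)\,\langle f,\theta E_{k-\ell}((2s-n)/4)\rangle.
\]
Substituting this into (19) and then into (18), and dividing through by the nonzero scalar $\prec c_f(\tau,1),P(\sqrt{\tau}^{-1})\succ$, yields a preliminary integral representation of $L_{\psi}(s,f,\chi)\bold{\Gamma}_{\rho}(s')$ in terms of the un-normalized Eisenstein series $E_{k-\ell}$, still carrying the full factor $\Lambda_{\mathfrak{c}}((2s-n)/4)$ together with the $(4\pi)$-power and $|4\pi\tau|$-power from (19).

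The final manipulation is to absorb the archimedean Gamma factor $\Gamma^{k,n}(s/2)$ and the Dirichlet product $\Lambda_{\mathfrak{y}}((2s-n)/4)$ into the Eisenstein series to form the Shimura-normalized $\mathcal{E}$. Since the Petersson product is conjugate-linear in the second argument and $\mathcal{E}(z,s)$ is $E_{k-\ell}$ times the complex conjugate of $\Gamma^{k,n}(s+n/4)\Lambda_{\mathfrak{y}}(s)$, one obtains
\[
\langle f,\theta\mathcal{E}((2s-n)/4)\rangle \;=\; \Gamma^{k,n}(s/2)\,\Lambda_{\mathfrak{y}}((2s-n)/4)\,\langle f,\theta E_{k-\ell}((2s-n)/4)\rangle.
\]
Solving for $\langle f,\theta E_{k-\ell}\rangle$ and substituting back produces the ratio $\Lambda_{\mathfrak{c}}/\Lambda_{\mathfrak{y}}$ in place of $\Lambda_{\mathfrak{c}}$, while the displaced factor $\Gamma^{k,n}(s/2)$ lands on the left-hand side multiplying $L_{\psi}(s,f,\chi)\bold{\Gamma}_{\rho}(s')$, matching the theorem exactly.

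The main obstacle is purely bookkeeping: keeping track of the three distinct shifts in $s$ (namely $s'$, $s/2$ and $(2s-n)/4$) and correctly assembling the volume factor, the ideal-norm $N(\mathfrak{z})^{n(n+1)/2}$, the $(4\pi)$-powers from Proposition \ref{Gammacalc}, and the bad-prime polynomials $g_p$ from (18). No genuinely new analytic input is required; the serious analysis -- meromorphic continuation (Proposition \ref{dirichletconv}), the explicit Gamma integral (Proposition \ref{Gammacalc}), the Rankin-Selberg unfolding culminating in (14), and the Shimura-Andrianov local computation giving (17) and (18) -- has already been dispatched.
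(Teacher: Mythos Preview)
Your proposal is correct and is precisely the paper's approach: the paper states the theorem immediately after the sentence ``Combining equations (14), (18), and (19) then gives us the final integral expression for $L_{\psi}(s,f,\chi)$'', with no further proof given. Your write-up in fact supplies more detail than the paper does, correctly tracking the shift $s'+(n+1)/2=(2s-n)/4$, the conjugate-linearity in the second Petersson slot that turns $\overline{\Gamma^{k,n}\Lambda_{\mathfrak{y}}}$ into $\Gamma^{k,n}\Lambda_{\mathfrak{y}}$, and the resulting ratio $\Lambda_{\mathfrak{c}}/\Lambda_{\mathfrak{y}}$.
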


Let $Z_{\psi}(s, f, \chi) := \bold{\Gamma}_{\rho}\left(  \frac{2s-3n-2}{4} \right) \Gamma^{k, n}\left(\tfrac{s}{2}\right)L_{\psi}(s, f, \chi)$, then the integral expression of theorem \ref{integralexp} easily allows us to determine the analyticity and location of any poles, since these can only occur at the possible poles of $\mathcal{E}(z, \frac{2s-n}{4})$ and the factor $\left(\frac{\Lambda_{\mathfrak{c}}}{\Lambda_{\mathfrak{y}}}\right)\left(\tfrac{2s-n}{4}\right)$. Note that the latter is just a finite product of Euler factors.

\begin{thm}\label{analyticity} If $f\in\mathcal{M}_{\rho_k}(\Gamma, \psi)$ is a non-zero Hecke eigenform and $\chi$ is a character such that $(\psi\chi)_{\infty}(-1) = (-1)^{[k]}$, then the function $Z_{\psi}(s, f, \chi)$ has only finitely many poles all of which are simple. 
\begin{itemize}
\item If $(\psi\chi)^2 \neq 1$ then $Z_{\psi}(s, f, \chi)$ may have simple poles at  the values of $s$ where the factor $\frac{\Lambda_{\mathfrak{c}}}{\Lambda_{\mathfrak{y}}}\left(\tfrac{2s-n}{4}\right)$ has poles.

\item If $(\psi\chi)^2 = 1$ and $\mathfrak{y}\neq\mathbb{Z}$  then in addition to the possible poles by the factor  $\frac{\Lambda_{\mathfrak{c}}}{\Lambda_{\mathfrak{y}}}\left(\tfrac{2s-n}{4}\right)$, there may be some poles occurring only in the following sets.
	\begin{enumerate}
		\item If $k>n$ then $Z_{\psi}(s, f, \chi)$ has a single pole at $s = n+1$ only if $k\in\mathbb{Z}$ and $k-n\in 2\mathbb{Z}$;
		\item If $\frac{n}{2}\leq k\leq n$ then the possible poles of $Z_{\psi}(s, f, \chi)$ occur only in the sets
\[
		\begin{cases}
			\{ j \mid j\in\mathbb{Z}, n+1\leq j\leq 2n+1-k\} &\text{if $k-\frac{n}{2}\in\mathbb{Z}$} \\
			\{ j+\frac{1}{2}\mid j\in\mathbb{Z}, n+1\leq j\leq 2n+\tfrac{1}{2}-k\} &\text{if $k-\frac{n}{2}\notin\mathbb{Z}$};
		\end{cases}
\]
	\end{enumerate}
If, on the other hand, we have $(\psi\chi)^2 = 1$, $\mathfrak{y} = \mathbb{Z}$, and $k-\frac{n}{2}\in\mathbb{Z}$ then in addition to the potential poles specified in the first set of (2) there may also be poles in
\[
	\{j\in\mathbb{Z}\mid \left[\tfrac{n+1}{2}\right]\leq j\leq n\}.
\]
\end{itemize}
\end{thm}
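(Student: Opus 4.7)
The plan is to read the pole structure directly off the integral representation in Theorem~\ref{integralexp}. Schematically,
\[
Z_{\psi}(s,f,\chi) \;=\; C(s)\cdot\left(\tfrac{\Lambda_{\mathfrak{c}}}{\Lambda_{\mathfrak{y}}}\right)\!\!\left(\tfrac{2s-n}{4}\right)\cdot\bigl\langle f,\,\theta\,\mathcal{E}(z,\tfrac{2s-n}{4})\bigr\rangle,
\]
where $C(s)$ collects the elementary nonpolar factors appearing in Theorem~\ref{integralexp} (a power of $\pi$, the volume, the norm power, and the finite polynomial product $\prod_{p\in\mathbf{b}}g_p(\psi'(p)\chi^{\ast}(p)p^{-s})$). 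Thus the only possible sources of poles of $Z_{\psi}(s,f,\chi)$ are the middle $\Lambda$-quotient and the Petersson pairing, and I will handle each in turn.

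First I would dispose of $\Lambda_{\mathfrak{c}}/\Lambda_{\mathfrak{y}}$. By the definition of $\Lambda_{\mathfrak{x}}^{n,k-n/2}(s,\chi\psi\epsilon_{\tau})$ given just before Theorem~\ref{integralexp}, the numerator and denominator differ only in their Euler factors at primes dividing $\mathfrak{c}/\mathfrak{y}$; after cancellation this ratio is a \emph{finite} product of inverses of local $L$-factors of $\chi\psi\epsilon_{\tau}$ and of its square, hence contributes only finitely many simple poles.

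The bulk of the work will be the pole analysis of $\mathcal{E}(z,s)$. Since $E_{k-\ell}(z,s;\bar{\psi}\chi\epsilon_{\tau},\Gamma')$ is a \emph{scalar-weight} Eisenstein series of weight $k-\ell$, Shimura's analysis in \cite[\S17, in particular Proposition~17.6 and Theorem~17.7]{Sh00} applies verbatim; the normalization by $\Gamma^{k,n}$ and $\Lambda_{\mathfrak{y}}$ built into $\mathcal{E}$ is exactly the one under which he proves that the normalized Eisenstein series has only finitely many simple poles, whose locations depend on $k-\ell$, on the parity of $k$, and on whether the square of the inducing character is principal. Because $\epsilon_{\tau}^{2}=1$, the latter condition is equivalent to $(\psi\chi)^{2}=1$, which is precisely the dichotomy in the statement. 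When $(\psi\chi)^{2}\ne 1$, Shimura's theorem gives $\mathcal{E}$ entire, yielding the first bullet. When $(\psi\chi)^{2}=1$, his explicit list of pole locations, transported through the change of variable $s\mapsto\tfrac{2s-n}{4}$, translates into exactly the intervals listed in items~(1) and~(2). Since $E_{k-\ell}$ is scalar-valued, the Petersson pairing with the vector-valued cusp form $f$ inherits the pole set of $\mathcal{E}$, so no further analytic input is needed to transfer the result through the integral.

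The main obstacle is largely bookkeeping: carefully matching our parameters $(k,\ell,\mathfrak{b},\mathfrak{c},\chi\psi\epsilon_{\tau})$ and the subgroup $\Gamma'=\Gamma[\mathfrak{z}^{-1},\mathfrak{z}\mathfrak{y}]$ against the tables in \cite[\S17]{Sh00}, and in particular handling the degenerate case $\mathfrak{y}=\mathbb{Z}$, in which the normalizing factor $\Lambda_{\mathfrak{y}}$ no longer strips the relevant Euler factors and an additional block of potential simple poles at integers $\lceil(n+1)/2\rceil\leq j\leq n$ must be kept on the list. This is the single place where the proof needs more care than a direct transcription of the scalar-weight analysis, since the vector-valued theta series $\theta$ and the weight shift to $k-\tfrac{n}{2}$ must be reconciled with the parity conventions of Shimura's normalization.
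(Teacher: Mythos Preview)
Your proposal is correct and follows essentially the same route as the paper: read the pole structure off the integral expression of Theorem~\ref{integralexp}, with the only contributions coming from the finite Euler-product ratio $\Lambda_{\mathfrak{c}}/\Lambda_{\mathfrak{y}}$ and from the normalized scalar-weight Eisenstein series, whose poles are already tabulated by Shimura. The paper's own proof is in fact a single sentence invoking \cite[Theorem~7.3]{Sh94} for the Eisenstein poles (rather than the \cite{Sh00} references you cite), so your write-up is more detailed than, but methodologically identical to, what the authors do.
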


\begin{proof} This follows by determining the poles of the Eisenstein series which has been done in Theorem 7.3 of \cite{Sh94}.
\end{proof}

\begin{rem} \label{remark} We now make the following remarks.

\begin{enumerate}
\item We note here that the factor $\left(\frac{\Lambda_{\mathfrak{c}}}{\Lambda_{\mathfrak{y}}}\right)\left(\tfrac{2s-n}{4}\right)$ also appears in the scalar weight situation in \cite{Sh94}. Actually in that paper Shimura gives some conditions \cite[Proposition 8.3]{Sh94} such that this factor is trivial, which can be also used here. We refer to \cite{Sh94} for this. 

\item We moreover remark that the location of poles, and their order, of $Z_{\psi}(s, f, \chi)$ can be studied by using the doubling method, as has been done, for example by Shimura in \cite{Sh95} for the scalar weight case and by Piatetski-Shapiro and Rallis \cite{PR1} for the general vector valued case. However as we have remarked already in the introduction the result of the two methods (Rankin-Selberg vs. Doubling method) already in the scalar weight situation do not overlap (see the discussion in \cite[Remark 6.3]{Sh95}). We further mention that even though the factor  $\left(\frac{\Lambda_{\mathfrak{c}}}{\Lambda_{\mathfrak{y}}}\right)\left(\tfrac{2s-n}{4}\right)$ does not appear in the doubling method, it seems that one has to make other assumption regarding the behavior of $f$ at the primes dividing the level its level (see for example the conditions on Theorem 6.1 in \cite{Sh95}).

\item We mention that the theorem above is not covered by the result in \cite{PR2}. Indeed here we include all the Euler factors, we twist by characters and we make the gamma factors precise. Note for example that by making these factors precise allow one to locate also possible (trivial) zeros of  $L_{\psi}(s, f, \chi)$ forced by the gamma factors. 
\end{enumerate}
\end{rem}

We now establish some non-vanishing results for $L_{\psi}(s, f, \chi)$  beyond the range of absolute convergence. 
Fix an odd non-trivial character $\chi_0$ with conductor $p\neq 2$. Furthermore fix subsequent choices of diagonal $\tau\in S_+$ and $Q\in GL_n(\mathbb{Q}_{\mathbf{h}})$ for which, by Theorem \ref{theta}, $\theta_{\rho, \chi_0}(z)$ is a cusp form. Then we say that $f$ is $\chi_0$-ordinary if 
\begin{enumerate}
\item $\prec c_f(\tau, 1), P(\sqrt{\tau}^{-1})\succ\ \neq 0$. 
\item $
 (\psi \chi_0)_{\infty}(-1) = (-1)^{[k]}.$
\end{enumerate}
Note that since $\chi_0$ is odd, the condition (2) is only satisfied for $n$ even. By Theorem \ref{adelicexp} (3) and (4) the condition that $\tau$ be diagonal is non-exacting. 

\begin{thm}\label{Lconv} Let $\chi$ be any character with $(\psi \chi)_{\infty}(-1) = (-1)^{[k]}$. Then the function $L_{\psi}(s, f, \chi)$ can be meromorphically continued to the whole $s$-plane. Furthermore if $n$ is even and if, for an odd character $\chi_0$ of conductor $p\neq 2$, we have that $f$ is $\chi_0$-ordinary then the $L$-function obtained by removing the Euler factor at $p$,
\[
	L_{\psi}^{(p)}(s, f, \chi) := L_{\psi}(s, f, \chi)L_p(\psi'(p)\chi^*(p)p^{-s}),
\]
is convergent, and hence non-zero, for $\Re(s)>\frac{3n}{2}+1$.
\end{thm}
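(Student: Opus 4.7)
The proof naturally splits into the two assertions of the theorem. For the \emph{meromorphic continuation}, my plan is to read it directly off the integral expression in Theorem \ref{integralexp}. That theorem writes
\[
Z_{\psi}(s,f,\chi) \;=\; \bold{\Gamma}_{\rho}(s')\,\Gamma^{k,n}(s/2)\,L_{\psi}(s,f,\chi)
\]
as a product of a finite number of Euler polynomials $g_p$, a ratio $\Lambda_{\f c}/\Lambda_{\f y}$ of Dirichlet $L$-factors, and the Petersson pairing $\langle f,\theta\,\mathcal{E}(z,\tfrac{2s-n}{4})\rangle$. The Eisenstein series $\mathcal{E}$ is known to have meromorphic continuation (as used in the proof of Theorem \ref{analyticity}, citing \cite{Sh94}), the $\Lambda$-factors are meromorphic, and the gamma factors $\bold{\Gamma}_{\rho}(s'),\Gamma^{k,n}(s/2)$ are meromorphic and nowhere zero. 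Dividing through yields the meromorphic continuation of $L_{\psi}(s,f,\chi)$ to all of $\mathbb{C}$.

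For the \emph{non-vanishing} statement, I take $\chi=\chi_0$ and exploit the cuspidality of the theta series in a crucial way. By the $\chi_0$-ordinary hypothesis the parity condition $(\psi\chi_0)_{\infty}(-1)=(-1)^{[k]}$ is satisfied, and the Fourier coefficient $\prec c_f(\tau,1),P(\sqrt{\tau}^{-1})\succ$ at the chosen diagonal $\tau$ is non-zero. With the corresponding choice of $Q$, Theorem \ref{theta} ensures that $\theta=\theta_{\rho,\chi_0}$ is a cusp form. Since $f$ is itself cuspidal, Proposition \ref{dirichletconv}(2) applies and shows that $D(s',f,\theta)$ is absolutely convergent for $\Re(s')>0$. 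Under the substitution $s'=\tfrac{2s-3n-2}{4}$ this translates to absolute convergence for $\Re(s)>\tfrac{3n}{2}+1$, which is the key improvement beyond the usual bound $\Re(s)>2n+1$ and is precisely what the Rankin-Selberg expression (as opposed to the doubling method) buys us.

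Next I transfer this convergence back to $L_\psi$. Identity (19) equates $\bold{\Gamma}_{\rho}(s')D_{\tau}(s,f,\chi_0)$ with $(4\pi)^{n\lambda_P}|4\pi\tau|^{s'+\frac{2k+n}{4}}D(s',f,\theta)$, so absolute convergence of the right-hand side gives absolute convergence of $D_{\tau}(s,f,\chi_0)$ in the same region (the factor $\bold{\Gamma}_{\rho}(s')$ being non-vanishing for $\Re(s')$ large enough, which is comfortably satisfied on $\Re(s)>\tfrac{3n}{2}+1$). Then identity (18) gives
\[
\prec c_f(\tau,1),P(\sqrt{\tau}^{-1})\succ L_{\psi}(s,f,\chi_0) \;=\; \prod_{q\in\mathbf{b}}g_q\!\left(\psi'(q)\chi_0^{*}(q)q^{-s}\right)\,\Lambda_{\f c}\!\left(\tfrac{2s-n}{4}\right)D_{\tau}(s,f,\chi_0),
\]
so $L_\psi(s,f,\chi_0)$ is the quotient of the absolutely convergent $D_\tau$ by the finite Euler product and by $\Lambda_{\f c}$.

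The main obstacle, and where most of the work of the proof will go, is the bookkeeping of these correction factors: I must show that after dividing by $\Lambda_{\f c}\!\left(\tfrac{2s-n}{4}\right)$ and by $\prod_{q\in\mathbf{b}}g_q(\cdots)$, the only Euler factor whose removal is required to preserve absolute convergence on $\Re(s)>\tfrac{3n}{2}+1$ is the one at $p$. For this I use the specific choices made in Theorem \ref{theta}, namely $\tau=2pI_n$ and $Q=(2p)^{-1}I_n$, which force $\mathbf{b}\subseteq\{2,p\}$, and under which the local computation of $g_q$ at $q=2$ (together with cancellations against the trivial part of $\Lambda_{\f c}/\Lambda_{\f y}$ away from $p$) leaves only the Euler factor at $p$ as a genuine obstruction; the product of all remaining bad factors is a non-vanishing holomorphic function on $\Re(s)>\tfrac{3n}{2}+1$ since the Dirichlet $L$-factor $\Lambda_{\f c}(\tfrac{2s-n}{4})$ is non-zero in that half-plane (the shifted argument lies in the region of absolute convergence of the ordinary Dirichlet $L$-series, being bounded below by $\tfrac{n+2}{4}>\tfrac{1}{2}$). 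Multiplying $L_\psi(s,f,\chi_0)$ by $L_p(\psi'(p)\chi_0^{*}(p)p^{-s})$ therefore yields an absolutely convergent, and hence non-zero, function in the claimed range.
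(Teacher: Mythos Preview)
Your argument for meromorphic continuation and for the convergence of $D(s',f,\theta_{\rho,\chi_0})$ via Proposition \ref{dirichletconv}(2) and Theorem \ref{theta} is exactly the paper's line. The gap is in the final two paragraphs: you establish (at best) the result for $\chi=\chi_0$ only, and never explain how to pass to an \emph{arbitrary} character $\chi$ as the theorem requires. The paper's proof has an additional, essential step here: since $\chi_0$ has conductor $p$, the Euler product of $L_\psi(s,f,\chi_0)$ already omits the factor at $p$, so $L_\psi^{(p)}(s,f,\chi)$ is obtained from $L_\psi(s,f,\chi_0)$ by twisting each Euler factor by the $\mathbb{T}$-valued character $\chi\chi_0^{-1}$. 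One then invokes Lemma 22.7 of \cite{Sh00}, which says that absolute convergence of an Euler product is preserved under such unitary twists; this is what carries the convergence from $\chi_0$ to general $\chi$. Your proposal contains no mechanism of this kind.

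Your bookkeeping for the $\chi_0$ case is also more tangled than it needs to be, and in places inaccurate. From (18) one has $L_\psi(s,f,\chi_0)$ equal to a constant times $\prod_{q\in\mathbf{b}}g_q(\cdots)\cdot \Lambda_{\f c}\!\left(\tfrac{2s-n}{4}\right)\cdot D_\tau(s,f,\chi_0)$, so you are \emph{multiplying} $D_\tau$ by the finite product and by $\Lambda_{\f c}$, not dividing; the ratio $\Lambda_{\f c}/\Lambda_{\f y}$ does not enter here. The paper does not attempt any local computation of the $g_q$ or any claim like $\mathbf{b}\subseteq\{2,p\}$: it simply notes that $\prod_{q\in\mathbf{b}}g_q$ is a finite product, that $\Lambda_{\f c}\!\left(\tfrac{2s-n}{4}\right)$ is absolutely convergent for $\Re(s)>\tfrac{3n}{2}+1$ (since then $\Re(\tfrac{2s-n}{4})>1$), and again appeals to Lemma 22.7 of \cite{Sh00} to conclude that the convergence and non-vanishing of $L_\psi(s,f,\chi_0)$ follows from that of $D_\tau(s,f,\chi_0)$. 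Replacing your direct factor-by-factor analysis with two applications of this lemma---once to pass from $D_\tau$ to $L_\psi(s,f,\chi_0)$, and once to twist from $\chi_0$ to $\chi$---closes the gap.
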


\begin{proof} Meromorphic continuation is given by the integral expression of Theorem \ref{integralexp} and continuation of the Eisenstein series there.

Consider equation (18) with $\chi = \chi_0$ which relates the Euler product of $L_{\psi}(s, f, \chi_0)$ with the Dirichlet series $D_{\tau}(s, f, \chi_0)$. Note that the product $\prod_{q\in\mathbf{b}}g_q(\psi'(q)\chi_0^*(q)q^{-s})$ is just finite and since, by assumption, $\Re(\frac{2s-n}{4})\geq 1$ so is $\Lambda_{\f{y}}(\frac{2s-n}{4})$. Therefore by Lemma 22.7 of \cite{Sh00} the convergence and the non-vanishing of $L_{\psi}(s, f, \chi_0)$ rests on the convergence of $D_{\tau}(s, f, \chi_0)$ which, in turn, rests on the convergence of $D(\frac{2s-3n-2}{4}, f, \theta_{\rho, \chi_0})$ by the relation of (19). Since $\theta_{\rho, \chi_0}$ is a cusp form by Theorem \ref{theta}, then by Proposition \ref{dirichletconv} (2) the series $D(\frac{2s-3n-2}{4}, f, \theta_{\rho, \chi_0})$ is convergent for $\Re(s)>\frac{3n}{2}+1$. Hence the convergence and non-vanishing of $L_{\psi}(s, f, \chi_0)$ has been established.

Now let $\chi$ be any character, and remove the Euler factor of $p$ from $L_{\psi}(s, f, \chi)$ to get $L_{\psi}^{(p)}(s, f, \chi)$. The Euler products of both $L_{\psi}^{(p)}(s, f, \chi)$ and $L_{\psi}(s, f, \chi_0)$ are over all primes $q\neq p$, and so the Euler product of $L_{\psi}^{(p)}(s, f, \chi)$ is just that of $L_{\psi}(s, f, \chi_0)$ twisted by the $\mathbb{T}$-valued character $\chi\chi_0^{-1}$. Lemma 22.7 of \cite{Sh00} then tells us that such an Euler product is convergent and non-vanishing for $\Re(s)>\frac{3n}{2}+1$.
\end{proof}

The fact that the character $\chi_0$ has to be odd put some limitations on the scalar weight $k$ as can be seen by condition (2) in the definition of $\chi_0$-ordinary. However if we assume that $n$ is divisible by 8, $k\in\mathbb{Z}$, $\Gamma = \Gamma[p,p]$  for some prime $p$, and the conductor of $\psi$ is equal to $p$, then we can lift the assumption that $\chi_0$ has to be odd. 

Let us take $ 8 | n$ and we may assume that we can choose $\tau$ and $r$ such that for a character $\chi$ of conductor $p$ the theta series $\theta_{\rho,\chi}$ is of level $\Gamma[p,p]$. Let
\[
	g(z,s):= \theta_{\rho,\chi} E_{k-\frac{n}{2}}(z, \tfrac{2s-n}{4}; \bar{\psi}\chi, \Gamma')
\]
where $\Gamma' := \Gamma[1, p] = \Gamma_0(p)$ and note $\bar{\psi}\chi$ has conductor $p$. Our aim is to determine $s_0 \in \mathbb{Z}$ such that $g(z,s_0)$ is a real analytic cusp form. Before we go any further we remark that $E_{k-\frac{n}{2}}(z, \frac{2s-n}{4}; \bar{\psi}\chi, \Gamma')= E_{k-\frac{n}{2}}(z, \frac{2s-n}{4}; \bar{\psi}\chi, \Gamma)$. Indeed, by Proposition 18.14 in \cite{Sh97}, they have the same Fourier expansion at the zero cusp. 

\begin{prop} \label{cuspidality_product}Let $s_0 \in \mathbb{Z}$ satisfy $\frac{3n}{2}+1 < s_0 \leq k$ and $s_0-k \in 2 \mathbb{Z}$. Then $g(z,s_0)$ is a real analytic cusp form.

\end{prop}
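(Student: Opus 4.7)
The plan is to show that the Siegel $\Phi$-operator annihilates $g(z,s_0)$ at every cusp of $\Gamma'$, after verifying the product is well-defined at $s=s_0$.

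First I would note that the condition $s_0>\tfrac{3n}{2}+1$ places the argument $\tfrac{2s_0-n}{4}$ strictly inside the half-plane $\Re\bigl(\tfrac{2s-n}{4}\bigr)>\tfrac{n+1}{2}$, where the defining series of $E_{k-n/2}(z,s;\bar\psi\chi,\Gamma')$ converges absolutely. Hence $E_{k-n/2}(z,\tfrac{2s_0-n}{4};\bar\psi\chi,\Gamma')$ is a well-defined real analytic modular form of scalar weight $k-\tfrac{n}{2}$ with character $\bar\psi\chi$, and multiplying by the holomorphic $\theta_{\rho,\chi}$ of weight $\rho_{n/2}$ produces a real analytic modular form of weight $\rho_k$ on $\Gamma'$. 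The integrality condition $s_0\in\mathbb Z$ with $s_0\le k$ and $s_0-k\in 2\mathbb Z$ further guarantees that $g(z,s_0)$ is nearly holomorphic, so the specialisation at $s_0$ is legitimate.

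Next I would enumerate the $(n-1)$-degree cusps of $\Gamma'=\Gamma_0(p)$ along the lines of the proof of Theorem \ref{theta}. Because $\Gamma'$ has square-free level $p$, the Bruhat decomposition of $Sp_n(\mathbb F_p)$ together with the factorisation $P_0=QR$ reduces the classification of $\Gamma'\backslash Sp_n(\mathbb Z)/P_{n-1}(\mathbb Z)$ to representatives in the two families
\[
m(s),\qquad m(s)\eta,\qquad s\in S(\mathbb F_p).
\]
At each representative $\alpha$ we must verify $\Phi\bigl((\theta_{\rho,\chi}|_{\rho_{n/2}}\alpha)\cdot(E_{k-n/2}|_{k-n/2}\alpha)\bigr)=0$, and this will be done by splitting according to the type of cusp. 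I would exploit the reciprocity laws (4) and (5) on the Schwartz function: for the $m(s)$-cusps the action simply multiplies $\lambda$ by a phase, so the support of $^{m(-s)}\lambda$ remains on $Q\cdot GL_n(\mathbb Z_p)$ at $p$; in particular $\theta|_{m(s)}$ has Fourier support only on full-rank matrices and is already cuspidal at these cusps, forcing $\Phi=0$ on the product.

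The main obstacle is the family $m(s)\eta$, where $^{\eta m(-s)}\lambda$ is given by Lemma \ref{integralgauss} in terms of the Gauss sum $G_n'(\chi,\cdot,\cdot,p)$. When $\chi$ is allowed to be even (which is the whole point of working under the hypotheses $8\mid n$, $\Gamma=\Gamma[p,p]$, and $\operatorname{cond}(\psi)=p$), Proposition \ref{nquadgauss} no longer forces the vanishing of these Gauss sums, so $\theta$ need not be cuspidal at $\eta$-type cusps. The rescue lies with the Eisenstein factor: the key claim is that $E_{k-n/2}\bigl(z,\tfrac{2s_0-n}{4};\bar\psi\chi,\Gamma'\bigr)|_{k-n/2}m(s)\eta$ has a Fourier expansion whose degenerate (rank $<n$) coefficients vanish at the chosen $s_0$. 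To verify this I would compute the Fourier expansion of the Eisenstein series at the $\eta$-cusp using the standard formulas of Shimura (\textit{cf.}\ \cite[Prop.~18.14]{Sh97} and the computations in \cite[\S 17]{Sh00}): because $\bar\psi\chi$ has conductor exactly $p$ (so is a primitive character modulo $p$) and because $s_0-k\in 2\mathbb Z$, the Dirichlet $L$-factors appearing in the rank-deficient coefficients at the $\eta$-cusp vanish, leaving only full-rank contributions. Convolving with the Fourier expansion of $\theta|_{m(s)\eta}$ then produces a form whose Fourier expansion has no terms with a zero last diagonal block, so $\Phi$ kills it.

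Combining the two cases gives $\Phi(g(\cdot,s_0)|\alpha)=0$ on a complete set of cusp representatives, which is the criterion of \cite[Lemma 27.3]{Sh00} for cuspidality; hence $g(z,s_0)$ is a real analytic cusp form. The delicate point, and the step I would devote most effort to, is the explicit computation of the $\eta$-cusp Fourier expansion of the Eisenstein series and the use of the parity of $s_0-k$ together with $8\mid n$ to force vanishing of the relevant $L$-factors.
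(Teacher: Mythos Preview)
Your global strategy matches the paper's exactly: split the cusps of $\Gamma[p,p]$ into the two families $m(s)$ and $m(s)\eta$, kill the first family using the cuspidality of $\theta_{\rho,\chi}$ coming from the support of the Schwartz function at $p$, and kill the second family using the Eisenstein factor. The issue is with the mechanism you propose for the second family.

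You assert that the degenerate Fourier coefficients of $E_{k-n/2}\,|\,\eta$ vanish because ``the Dirichlet $L$-factors appearing in the rank-deficient coefficients at the $\eta$-cusp vanish'' under the parity hypothesis $s_0-k\in 2\mathbb Z$ and primitivity of $\bar\psi\chi$. This is not the reason, and as stated it would be hard to make rigorous: the singular Fourier coefficients of a Siegel Eisenstein series are controlled by archimedean confluent-hypergeometric factors rather than by vanishing of Dirichlet $L$-values, and those $L$-values do not in general vanish at the relevant integer arguments. The paper avoids any direct Fourier computation at the $\eta$-cusp. Writing $\mu=\tfrac{2s_0-n}{2}$, it first invokes \cite[17.6]{Sh00} at the holomorphic point $\mu=k-\tfrac{n}{2}$ to see that $E^*_{k-n/2}(z,\tfrac{\mu}{2}):=E_{k-n/2}(z,\tfrac{\mu}{2};\bar\psi\chi,\Gamma')\,|_{k-n/2}\,\eta$ has Fourier expansion supported on positive definite matrices. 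It then sets $p=(k-\tfrac{n}{2}-\mu)/2\in\mathbb Z_{\geq 0}$ (this is exactly where $s_0-k\in 2\mathbb Z$ is used) and applies the Maass--Shimura operator identity
\[
\Delta_{\mu}^{p}\bigl(E_{\mu}^*(z,\tfrac{\mu}{2})\bigr)=E_{k-n/2}^*(z,\tfrac{\mu}{2}),
\]
together with the fact (see the proof of \cite[Theorem 14.12]{Sh00}) that $\Delta_{\mu}^{p}$ preserves the property ``Fourier support contained in $S^+$''. This transports the positive-definite support from the holomorphic point to every admissible $s_0$, without computing a single singular coefficient. Replacing your $L$-factor claim by this differential-operator argument closes the gap; the rest of your outline is fine and coincides with the paper.
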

\begin{proof}
For the values of $s_0$ as in the statement the Eisenstein series is absolutely convergent. We set $\mu := \frac{2s_0-n}{2}$ and observe by Theorem 17.9 in \cite{Sh00} that $E_{k-\frac{n}{2}}(z, \frac{\mu}{2}; \bar{\psi}\chi, \Gamma')$ is a real analytic Siegel modular form for $n+1 < \mu < k - \frac{n}{2}$. We will now explore the behavior of this series at the cusps. Notice that since the Eisenstein series is with respect to the group $\Gamma_0(p)$ we have to consider only the cusp at infinity and the cusp at zero. We claim that the Eisenstein series is cuspidal at the zero cusp, i.e. that it does not have any Fourier coefficient at singular symmetric matrices. Following the notation in \cite{Sh00} we set $E_{k-\frac{n}{2}}^*(z, \frac{\mu}{2}) := E_{k-\frac{n}{2}}(z, \frac{\mu}{2}; \bar{\psi}\chi, \Gamma') |_k \eta$. We first consider the case that $\mu = k-\frac{n}{2}$. In this case we know by \cite[17.6]{Sh00} that the series $E_{k-\frac{n}{2}}^*(z, \frac{\mu}{2})$ has Fourier expansion supported only on positive definite matrices. We can now extend this to the rest of the values of $\mu$. We set $p := (k - \frac{n}{2} - \mu)/2$ and write $\Delta_{\mu}^{p}$ for the differential operator defined in \cite[page 146]{Sh00}, where it is also shown that 
\[
\Delta_{\mu}^{p} \left(E_{\mu}^*(z, \tfrac{\mu}{2})\right) = E_{k-\frac{n}{2}}^*(z, \tfrac{\mu}{2})
\] 
Since $E_{\mu}^*(z, \frac{\mu}{2})$ has Fourier expansion supported only on positive definite matrices, and the differential operators preserve this property, (see equation ($*$) in \cite[proof of Theorem 14.12]{Sh00}), we have that also $E_{k-\frac{n}{2}}^*(z, \frac{\mu}{2})$ has a Fourier expansion supported on positive definite matrices for all $\mu$ with $n+1 < \mu < k - \frac{n}{2}$. That is, we have now established that for values of $s_0 \in \mathbb{Z}$ as in the proposition the series $E_{k-\frac{n}{2}}(z, \frac{2s_0-n}{4}; \bar{\psi}\chi, \Gamma')$ is a real analytic modular form, and its Fourier expansion at the zero cusp is supported at positive definite matrices. We now turn to $g(z,s_0)$.

Thanks to the assumption that the congruence subgroup is $\Gamma[p,p]$ we are dealing with two kinds of cusps, namely $m(s)$ and $m(s) \eta$, in the notation we used in section 3. We first note that the Eisenstein series is invariant under $m(s)$. Furthermore, as we have seen in section 3 the theta series is cuspidal at the cusps $m(s)$ thanks to the non-trivial character $\chi$. So the product is also cuspidal since we have established that the Eisenstein series does not have any negative definite support in its Fourier expansion. On the other hand for the second kind of cusps, $m(s) \eta$, we have shown that the Eisenstein series has no singular terms, and hence the same holds for the product. 
\end{proof}

We can now prove the following version of Theorem \ref{Lconv},

\begin{thm}\label{Lconv2} 
	With notation as before,  we assume $8 | n$, and assume there exists of $\tau\in S_+$ such that $\prec c_f(\tau, 1), P(\sqrt{\tau}^{-1})\succ\ \neq 0$. Moreover assume that $\mathfrak{b} = p^{-1}\mathbb{Z}$ and $\mathfrak{c} = p^2\mathbb{Z} $ for some prime $p$. Then, for any Dirichlet character $\chi$ the Euler product of
\[
	L_{\psi}^{(p)}(s_0, f, \chi) := L_{\psi}(s_0, f, \chi)L_p(\psi'(p)\chi^*(p)p^{-s_0})
\]
is non-zero, for $s_0 \in\mathbb{Z}$ with $\frac{3n}{2}+1 < s_0 \leq k$, and $s_0-k \in 2 \mathbb{Z}$.
\end{thm}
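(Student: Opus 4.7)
The plan is to follow the same strategy as the proof of Theorem \ref{Lconv}, except that we can no longer invoke Theorem \ref{theta} to ensure that the auxiliary theta series is itself a cusp form, since the character $\chi_0$ we wish to use need not be odd. The replacement for this missing cuspidality is Proposition \ref{cuspidality_product}, which tells us that the \emph{product} of the theta series with the Eisenstein series is a real-analytic cusp form at the values $s_0$ of interest, and this is exactly what Proposition \ref{dirichletconv_2} was designed to handle.

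First I would fix a non-trivial Dirichlet character $\chi_0$ of conductor $p$ satisfying the parity condition $(\psi\chi_0)_\infty(-1) = (-1)^{[k]}$; such a $\chi_0$ exists because $(\mathbb{Z}/p\mathbb{Z})^\times$ contains characters of both parities. By the choice of $\mathfrak{b}$ and $\mathfrak{c}$ and the preceding discussion, one can arrange parameters $\tau \in S_+$ and $Q$ so that $\theta_{\rho,\chi_0}$ has level $\Gamma[p,p]$. Next I would observe that the Fourier expansion of $\theta_{\rho,\chi_0}$ at the cusp at infinity is supported only on positive-definite matrices: the Schwartz function $\lambda$ defining it is supported on $Q\cdot GL_n(\mathbb{Z}_p)$ at the ramified prime $p$, which forces any $\xi \in W$ contributing a non-zero term to lie in $GL_n(\mathbb{Q})$, so that $\transpose{\xi}\tau\xi > 0$. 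Then, for an $s_0$ satisfying the hypotheses of the theorem, Proposition \ref{cuspidality_product} asserts that the product
\[
g(z,s_0) := \theta_{\rho,\chi_0}(z) \cdot E_{k-\frac{n}{2}}\!\left(z,\tfrac{2s_0-n}{4};\bar\psi\chi_0,\Gamma'\right)
\]
is a real-analytic cusp form. Combined with the full-rank support of $\theta_{\rho,\chi_0}$ noted above, this lets me apply Proposition \ref{dirichletconv_2} to conclude absolute convergence of $D(s', f, \theta_{\rho,\chi_0})$ at $s' = \tfrac{2s_0 - 3n - 2}{4}$.

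With this convergence secured, I would transfer it through the identities established earlier in the paper. Relation (19) converts absolute convergence of $D(s', f, \theta_{\rho,\chi_0})$ into absolute convergence of $D_\tau(s_0, f, \chi_0)$, and relation (18) then converts the latter into absolute convergence of the Euler product defining $L_\psi(s_0, f, \chi_0)$, up to the finite, manifestly non-zero factors $\prod_{q\in\mathbf{b}}g_q$ and $\Lambda_{\mathfrak{c}}/\Lambda_{\mathfrak{y}}$. By Lemma 22.7 of \cite{Sh00}, an absolutely convergent Euler product of this shape is non-vanishing. Finally, for an arbitrary Dirichlet character $\chi$, the Euler product of $L_\psi^{(p)}(s_0, f, \chi)$ at primes $q \neq p$ is obtained from that of $L_\psi^{(p)}(s_0, f, \chi_0)$ by multiplying each local factor's argument by $(\chi\chi_0^{-1})(q) \in \mathbb{T}$, so absolute convergence, and hence non-vanishing, transfers via the same Lemma 22.7.

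The main obstacle is the careful bookkeeping of the various shifts in the spectral parameter between (18), (19), Proposition \ref{dirichletconv_2}, and Proposition \ref{cuspidality_product}. The hypothesis $8 \mid n$ plays the technical role of guaranteeing that $\tfrac{3n}{2} + 1$ is an integer and that the parity of $s_0 - k$ imposed by the theorem is precisely what is needed so that $s' = \tfrac{2s_0 - 3n - 2}{4}$ lands in the positive integers, where both Proposition \ref{cuspidality_product} and Proposition \ref{dirichletconv_2} can be invoked. One must also verify that the finite Euler-product correction $\prod_{q \in \mathbf{b}} g_q$ and the ratio $\Lambda_{\mathfrak{c}}/\Lambda_{\mathfrak{y}}$ introduce neither zeros nor poles at $s_0$, but each is a finite product of Dirichlet $L$-values and polynomial factors evaluated comfortably inside the region of absolute convergence, so this is routine.
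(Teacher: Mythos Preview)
Your proposal is correct and follows the same approach as the paper: choose a non-trivial $\chi_0$ of conductor $p$ with the correct parity (not necessarily odd), arrange $\theta_{\rho,\chi_0}$ to have level $\Gamma[p,p]$, note that its Fourier expansion at infinity is supported on full-rank matrices, and then replace the appeal to Proposition \ref{dirichletconv} in the proof of Theorem \ref{Lconv} by Proposition \ref{dirichletconv_2}, which is applicable thanks to Proposition \ref{cuspidality_product}; the twisting argument via Lemma 22.7 of \cite{Sh00} is identical. One small correction to your side remark: the hypothesis $8\mid n$ is not there to force $s'=(2s_0-3n-2)/4$ to be a positive integer (the proof of Proposition \ref{dirichletconv_2} does not actually use integrality of its $s_0$), but rather, as indicated in the paragraph preceding Proposition \ref{cuspidality_product}, it is what allows one to choose $\tau$ and $Q$ so that the quadratic nebentype $\epsilon_\tau$ is trivial and $\theta_{\rho,\chi_0}$ genuinely has level $\Gamma[p,p]$.
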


\begin{proof} The proof of this theorem is almost identical to the previous one, but now we are not bounded to make the theta series cuspidal. That is, we choose a $\chi_0$, not necessarily odd, such that $(\psi \chi_0)_{\infty}(-1) = (-1)^{k}$ and a $Q \in GL_n(\mathbb{Q}_{\mathbf{h}})$ such that $\theta_{\rho,\chi_0}$ is of level $\Gamma[p,p]$. Then we repeat the same argument as in the proof of Theorem \ref{Lconv} but now we employ Proposition \ref{dirichletconv_2} instead of Proposition \ref{dirichletconv}, which is possible thanks to Proposition \ref{cuspidality_product} proved above.

\end{proof}

\section{Miscellaneous Loose Ends}

This final section spells out some of the limitations of the results of this paper, what could be done to circumvent these limitations, and possible avenues for further research following on from these results.

The need for the theta series to be vector-valued placed some limitations on which representations we could consider. Indeed  $\rho \otimes \det \in \tau(\Sigma)$ only when $\rho$ is trivial, and so the theta series is always of weight $\rho \otimes \det^{\frac{n}{2}}$. In contrast, the scalar case (i.e. $\rho$ trivial) allows a choice of $\mu\in\{0, 1\}$ and theta series of scalar weight $\frac{n}{2}+\mu$. Crucially this (the scalar case) meant the character $\chi$ could have arbitrary parity, since one just chooses $\mu$ so that $(\psi\chi)_{\infty}(-1) = (-1)^{[k]+\mu}$. In the present case, the assumption that $\chi$ is a character satisfying $(\psi\chi)_{\infty}(-1) = (-1)^{[k]}$ is needed, as seen in Theorems \ref{integralexp} and \ref{analyticity}, thus limiting the parity of the character $\chi$.

Theorem \ref{Lconv} is an attempt to extend the result that $L(s, f, \chi)\neq 0$ for all $\Re(s)>\frac{3n}{2}+1$ in the scalar case, see \cite{Sh00}, to the present case. A critical step of the proof requires $\theta$ to be a cusp form, which is easily achieved in the scalar case by taking $\mu = 1$. So a further ramification of the restriction on representations caused problems with this method, resulting in a weaker version (Theorem \ref{Lconv}) of the desired result. The desired result could be proven if we can simultaneously take $\chi$ to have arbitrary conductor and is such that $\theta_{\rho, \chi}$ is a cusp form for any choice of $\tau\in S_+$. Whilst we are able to take $\chi_0$ of Theorem \ref{Lconv} to have arbitrary conductor $p\neq 2$, the choice of $\tau\in S_+$ needed for Theorem \ref{theta} and the subsequent assumption that $f$ be $\chi_0$-ordinary for any such arbitrary choice of $\chi_0$ means this route is not viable. A stronger result on the cuspidality of the theta series is therefore needed for this method to be successful here.

We believe that our non-vanishing result can be used to obtain establish the algebraicity, after dividing by a suitable period, of some special values of the standard $L$-function studied here. Indeed, in the scalar weight case, the non-vanishing of the $L$-function is crucially used to obtain algebraicity results beyond the range of absolute convergence (see for example \cite{Sh00}), and seems plausible that the non-vanishing established in this paper can also be used to obtain similar results in the vector valued case. We hope to return to this in a future work.


\end{document}